\newtheorem{theorem}{Theorem}[section]
\newtheorem{cor}[theorem]{Corollary}
\newtheorem{lemma}[theorem]{Lemma}
\newtheorem{prop}[theorem]{Proposition}
\newtheorem{problem}{Problem}
\newenvironment{proof}{\prepf\rm}{\endprepf}
\newcommand{\qed}{\qquad$\Box$}
\newcommand{\Soc}{\mathop{\mathrm{Soc}}\nolimits}
\newcommand{\PSL}{\mathop{\mathrm{PSL}}\nolimits}
\newcommand{\Sz}{\mathop{\mathrm{Sz}}\nolimits}
\newcommand{\GF}{\mathop{\mathrm{GF}}\nolimits}
\newcommand{\Aut}{\mathop{\mathrm{Aut}}\nolimits}
\newcommand{\Out}{\mathop{\mathrm{Out}}\nolimits}
\begin{document}

\title{Criterion of unrecognizability of a finite group by its Gruenberg--Kegel graph}
\author{Peter J. Cameron\footnote{University of St Andrews, St Andrews, UK, email:\texttt{pjc20@st-andrews.ac.uk}} \ and   Natalia~V.~Maslova \footnote{Krasovskii Institute of Mathematics and Mechanics UB RAS, Ekaterinburg, Russia and Ural Federal University, Ekaterinburg, Russia, email: \texttt{butterson@mail.ru}}$\phantom{x}$\footnote{Corresponding author.}}
\date{}

\maketitle

\centerline{Dedicated to the memory of Jan Saxl}

\begin{abstract}
The Gruenberg--Kegel graph $\Gamma(G)$ associated with a finite group $G$ is an undirected graph without loops and multiple edges whose vertices are the prime divisors of $|G|$ and in which vertices $p$ and $q$ are adjacent in $\Gamma(G)$  if and only if $G$ contains an element of order $pq$. This graph has been the subject of much recent interest; one of our goals here is to give a survey of some of this material, relating to groups with the same Gruenberg--Kegel graph. However, our main aim is to prove several new results. Among them are the following.
\begin{itemize}
\item There are infinitely many finite groups with the same Gruenberg--Kegel graph as the Gruenberg--Kegel of a finite group $G$ if and only if there is a finite group $H$ with non-trivial solvable radical such that $\Gamma(G)=\Gamma(H)$.
\item There is a function $F$ on the natural numbers with the property that if a finite $n$-vertex graph whose vertices are labelled by pairwise distinct primes is the Gruenberg--Kegel graph of more than $F(n)$ finite groups, then it is the Gruenberg--Kegel graph of infinitely many finite groups. (The function we give satisfies $F(n)=O(n^7)$, but this is not best possible.)
\item If a finite graph $\Gamma$ whose vertices are labelled by pairwise distinct primes is the Gruenberg--Kegel graph of only finitely many finite groups, then all such groups are almost simple; moreover, $\Gamma$ has at least three pairwise non-adjacent vertices, and each vertex is non-adjacent to at least one other vertex, in particular, $2$ is non-adjacent to at least one odd vertex.
\item Groups whose power graphs, or commuting graphs, are isomorphic have the same Gruenberg--Kegel graph.
\item The groups ${^2}G_2(27)$ and $E_8(2)$ are uniquely determined by the isomorphism types of their Gruenberg--Kegel graphs.
\end{itemize}
In addition, we consider groups whose Gruenberg--Kegel graph has no edges. These are the groups in which every element has prime power order, and have been studied under the name \emph{EPPO groups}; completing this line of research, we give a complete list of such groups.
\end{abstract}

\section{Introduction}

Throughout the paper we consider only finite groups and simple graphs, and henceforth the term group means finite group, the term graph means simple graph (undirected graph without loops and multiple edges).

Let~$G$ be a group. Denote by $\pi(G)$ the set of all prime divisors of the
order of $G$ and by $\omega (G)$ the {\it spectrum} of~$G$, that is, the set of all its element orders. The
set $\omega(G)$ defines the {\it Gruenberg--Kegel graph} (or the {\it prime graph})
$\Gamma(G)$ of~$G$; in this graph the vertex set is $\pi(G)$, and distinct vertices~$p$ and~$q$ are adjacent if and only if $pq\in\omega (G)$.

The concept of Gruenberg--Kegel graph appeared in the unpublished manuscript \cite{Gruen_Keg} by K.~Gruenberg and O.~Kegel, where they have characterized groups with disconnected Gruenberg--Kegel graph. This result was published later in the paper~\cite{Williams} by J.~Williams, who was a student of K.~Gruenberg, and now this theorem is well-known as the Gruenberg--Kegel Theorem (see Lemma~\ref{Gruenberg--Kegel theorem} in section~\ref{Prelim}). The concept of Gruenberg--Kegel graph proved to be very useful  with connection to research of some cohomological questions in integral group rings: the augmentation ideal of an integral group ring is decomposable as a module if and only if the Gruenberg--Kegel graph of the group is disconnected (see \cite{Gruen_Rogg}).

Later connected components of Gruenberg--Kegel graphs of simple groups were described. J.~Williams~\cite{Williams} has obtained this description for all simple groups except simple groups of Lie type in characteristic $2$. Connected components of Gruenberg--Kegel graphs of simple groups of Lie type in characteristic $2$ were described by A.~S.~Kondrat'ev~\cite{Kondr1}, later this result was obtained independently by N.~Iiyori and H.~Yamaki~\cite{Iiyori_Yamaki_1,Iiyori_Yamaki_2}. Unfortunately, all the papers \cite{Williams,Kondr1,Iiyori_Yamaki_1,Iiyori_Yamaki_2} contain rather serious inaccuracies. Most of these inaccuracies was corrected in~\cite{Kondr4}, and then the corrections were finished in~\cite{Kondr2}. Now the correct description of connected components of Gruenberg--Kegel graphs of simple groups can be found, for example, in~\cite{ak} or in~\cite{Mazurov2004}; finite simple groups~$P$ with at least $4$ connected components of Gruenberg--Kegel graph can be found in section~\ref{Prelim} of this paper, see Table~\ref{table}. Criteria of adjacency of vertices in Gruenberg--Kegel graphs of simple groups were obtained by A.~V.~Vasil'ev and E.~P.~Vdovin in~\cite{VasilVdov_2005} with some corrections in~\cite{VasilVdov_2011}. Moreover, at this moment all the cases of coincidence of Gruenberg--Kegel graphs of a simple group and its proper subgroup are described by the second author~\cite{Maslova} and, independently, by T.~Burness and E.~Covato~\cite{Burness_Covato}. In section~\ref{Prelim}, we provide some known properties of Gruenberg--Kegel graphs of groups as well as some facts from group theory and number theory which we use to prove the main results of this paper.

\medskip

This paper aims to discuss the questions:
\begin{itemize}
\item Which groups $G$ are uniquely determined by their Gruenberg--Kegel graph?
\item For which groups are there only finitely many groups with the same  Gruenberg--Kegel graph as $G$?
\item Which groups $G$ are uniquely determined by isomorphism type of their Gruenberg--Kegel graph?
\end{itemize}

The concept of Gruenberg--Kegel graph and the Gruenberg--Kegel Theorem proved very useful for recognition questions of a group by its spectrum. It is easy to see that for groups $G$ and $H$, if $G \cong H$, then $\omega(G)=\omega(H)$; and if $\omega(G)=\omega(H)$, then $\Gamma(G)=\Gamma(H)$; if $\Gamma(G)=\Gamma(H)$, then $\pi(G)=\pi(H)$ and $\Gamma(G)$ and $\Gamma(H)$ are isomorphic as abstract graphs. The converse does not hold in each case, as the following series of examples demonstrates (see, for example, \cite{Atlas}):

\begin{itemize}

\item $S_5\not \cong S_6$ but $\omega(S_5)=\omega(S_6)${\rm;}

\item $\omega(A_5)\not =\omega(A_6)$ but $\Gamma(A_5)=\Gamma(A_6)${\rm;}

\item $\Gamma(A_{10})\not =\Gamma(\Aut(J_2))$ but  $\Gamma(A_{10})$ and $\Gamma(\Aut(J_2))$ are isomorphic as abstract graphs  and $\pi(A_{10}) =\pi(\Aut(J_2))$, see the picture below

\bigskip

\centerline{
    \begin{tikzpicture}
    \tikzstyle{every node}=[draw,circle,fill=white,minimum size=4pt,
                            inner sep=0pt]
    \draw (0,0) node (2) [label=left:$2$]{}
        -- ++ (-30:1cm) node (3) [label=below:$3$]{}
        -- ++ (0:1cm) node (7) [label=below:$7$]{}
          (-90:1.0cm) node (5) [label=left:$5$]{};
    \draw (5) -- (3);
    \draw (5) -- (2);
    \end{tikzpicture}{$\Gamma(A_{10})$;} \\
    \begin{tikzpicture}
    \tikzstyle{every node}=[draw,circle,fill=white,minimum size=4pt,
                            inner sep=0pt]
    \draw (0,0) node (2) [label=left:$3$]{}
        -- ++ (-30:1cm) node (3) [label=below:$2$]{}
        -- ++ (0:1cm) node (7) [label=below:$7$]{}
          (-90:1.0cm) node (5) [label=left:$5$]{};
    \draw (5) -- (3);
    \draw (5) -- (2);
    \end{tikzpicture}{$\Gamma(\Aut(J_2))$.}
}

\bigskip

\end{itemize}

\noindent We say that the group $G$ is
\begin{itemize}
\item{\it recognizable} by its spectrum (Gruenberg--Kegel graph, respectively) if for each group~$H$, $\omega(G)=\omega(H)$ ($\Gamma(G)=\Gamma(H)$, respectively) if and only if $G \cong H$\/;
\item  {\it $k$-recognizable} by spectrum (Gruenberg--Kegel graph, respectively), where $k$ is a non-negative natural number, if there are exactly $k$ pairwise non-isomorphic groups with the same spectrum (Gruenberg--Kegel graph, respectively) as $G${\rm;}
\item  {\it almost recognizable} by spectrum (Gruenberg--Kegel graph, respectively) if it is $k$-recognizable by spectrum (Gruenberg--Kegel graph, respectively) for some non-negative natural number $k${\rm;}
\item {\it unrecognizable} by spectrum (Gruenberg--Kegel graph, respectively), if there are infinitely many pairwise non-isomorphic groups with the same spectrum (Gruenberg--Kegel graph, respectively) as $G$.
\end{itemize}

If a group $G$ contains a non-trivial solvable normal subgroup, then there are infinitely many groups with the same spectrum as $G$. This proposition, formulated first by W.~Shi, is well-known, and its proof was published in a number of papers (see, for example, \cite{Mazurov_Shi}). Moreover, in~\cite{Mazurov_Shi} the following criteria of unrecognizability of a group by spectrum was obtained.

\begin{theorem}[{\rm see \cite{Mazurov_Shi}}]\label{UnrecSpec} Let $G$ be a group. The following statements are equivalent\/{\rm:}
\begin{itemize}
\item[$(1)$] there exist infinitely many groups $H$ such that $\omega(G)=\omega(H)$\/{\rm;}
\item[$(2)$] there exists a group $H$ with non-trivial solvable radical such that $\omega(G)=\omega(H)$.
\end{itemize}
\end{theorem}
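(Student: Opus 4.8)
The plan is to treat the two implications separately, since they carry very different weight. The implication $(2)\Rightarrow(1)$ I would dispatch immediately using the proposition of Shi recalled just before the theorem: if $H$ has non-trivial solvable radical and $\omega(H)=\omega(G)$, then there are already infinitely many groups with spectrum $\omega(H)=\omega(G)$, which is exactly $(1)$. All the substance lies in $(1)\Rightarrow(2)$, and I would prove it in contrapositive form: assuming that \emph{no} group with spectrum $\omega(G)$ has non-trivial solvable radical, I would show that only finitely many groups have spectrum $\omega(G)$, contradicting $(1)$.

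First I would record the structural consequences of the standing assumption. If $H$ satisfies $\omega(H)=\omega(G)$, then by assumption its solvable radical is trivial, so $\Soc(H)=S_1\times\cdots\times S_k$ is a direct product of non-abelian simple groups, $C_H(\Soc(H))=1$, and therefore $\Soc(H)\le H\le\Aut(\Soc(H))$. Moreover $\omega(H)=\omega(G)$ forces $\pi(H)=\pi(G)=:\pi$, so every factor $S_j$ has $\pi(S_j)\subseteq\pi$. By the classification of finite simple groups together with Zsigmondy's theorem, only finitely many finite simple groups have all their prime divisors inside the fixed finite set $\pi$; write $\mathcal{T}$ for this finite set. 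If the number $k$ of simple factors were bounded over the whole family by some $k_0$, the argument would be complete: $\Soc(H)$ would range over only finitely many isomorphism types, each with finite outer automorphism group, and since $\Soc(H)\le H\le\Aut(\Soc(H))$ there would be only finitely many possibilities for $H$.

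Hence everything comes down to bounding $k$, and I expect this to be the main obstacle. Fixing $\omega(G)$ alone does \emph{not} bound $k$ --- for instance the groups $A_5^{\,k}$ with $k\ge 3$ all have spectrum $\{1,2,3,5,6,10,15,30\}$, because the element orders of a direct power are the least common multiples of the orders in the factors, and this set stabilizes once $k$ is large --- so a further idea is essential. My plan is to argue once more by contradiction: if $k$ is unbounded over the family, then since $\mathcal{T}$ is finite the pigeonhole principle produces a single simple group $S\in\mathcal{T}$ occurring as a direct factor of $\Soc(H)$ with arbitrarily large multiplicity $m$, and from such a high power $S^{\,m}$ I would construct a group $R$ with non-trivial solvable radical and $\omega(R)=\omega(G)$, contradicting the standing assumption and so forcing $k$ to be bounded (whence the previous paragraph finishes the proof). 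Concretely I would choose a faithful $\mathbb{F}_r S$-module $U$ for a suitable prime $r$, take the diagonal module $U^{\,m}$ for $S^{\,m}$, and form the split extension $R=U^{\,m}\rtimes S^{\,m}$, whose solvable radical contains $U^{\,m}\neq 1$; as $m$ grows its spectrum again stabilizes, this time to the least-common-multiple closure of $\omega(U\rtimes S)$. The hard part will be choosing $U$ and $r$ so that this stable spectrum is exactly $\omega(G)$, neither gaining nor losing element orders; this reduces to controlling the fixed-point spaces of the $p$- and $p'$-elements of $S$ acting on $U$, and it is precisely here that Shi's module-theoretic constructions must do the real work.
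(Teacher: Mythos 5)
A preliminary remark on the comparison you ask for: the paper does not prove this theorem at all --- it is quoted from Mazurov and Shi~\cite{Mazurov_Shi} --- and the nearest argument the paper does contain, the proof of its Gruenberg--Kegel graph analogue (Theorem~\ref{UnrecGK}), follows an easier route that is unavailable here: for graphs, if $2$ is adjacent to every odd vertex then $C_2\times G$ is already a witness, and otherwise Vasil'ev's lemma (Lemma~\ref{VasAnd}) forces every group in question to be almost simple with \emph{simple} socle, whence finiteness by Proposition~\ref{NumSimple}. Neither step survives the passage from $\Gamma(G)$ to $\omega(G)$; indeed your own example $A_5^k$ shows that in the spectrum setting the socle can have unboundedly many factors. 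So your proposal must stand on its own merits.

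Your $(2)\Rightarrow(1)$ is fine, and the skeleton of $(1)\Rightarrow(2)$ is sensible: trivial radicals give $\Soc(H)\le H\le\Aut(\Soc(H))$ with simple factors drawn from a finite list, bounding the number $k$ of factors would finish the proof, and you rightly observe that $k$ is not bounded by the spectrum alone. The genuine gap is the step you defer (``the hard part''): producing, when $k$ is unbounded, a group $R$ with non-trivial solvable radical and $\omega(R)=\omega(G)$. This is not a technical loose end --- it is the whole theorem --- and, worse, the specific construction you propose cannot deliver it. Since $R=U^m\rtimes S^m$ is manufactured from the single pigeonholed factor $S$ and one $\mathbb{F}_rS$-module $U$, every element order of $R$ is a least common multiple of numbers of the form $|s|$ or $r|s|$ with $s\in S$. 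Now test the family $H_m=A_5^m\times\PSL_2(7)$, $m\ge3$: these groups are pairwise non-isomorphic, all have trivial solvable radical, and share one spectrum $\omega$, which contains both $4$ and $7$; pigeonholing necessarily selects $S=A_5$, since $\PSL_2(7)$ occurs with multiplicity one. If $r\ne7$ then $7\notin\omega(R)$, while if $r=7$ then every element order of $R$ divides $2\cdot3\cdot5\cdot7=210$, so $4\notin\omega(R)$. Thus $\omega(R)\ne\omega$ for every choice of $U$, $r$ and $m$. A correct witness for this family exists, namely $C_{30}\times\PSL_2(7)$, but it must incorporate the remaining composition factors of $H$ (and, in general, the way $H$ permutes its socle factors), not just the repeated factor $S$. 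Since your contradiction in the unbounded-$k$ case rests entirely on producing $R$ from $S$ alone, the argument collapses exactly there; controlling element orders in suitable extensions built from the whole of $H$ is precisely the content of the Mazurov--Shi proof.
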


Thus, the question of recognition of a group by its spectrum is interesting only for groups with trivial solvable radical. This question is being actively investigated. We do not pretend to provide a complete survey of this research area but will mention some results which are interesting from our point of view. A remarkable result is that if $G$ is a simple group and $H$ is a group, then $|H|=|G|$ and $\omega(H)=\omega(G)$ if and only if $H \cong G$. (This was put forward as a conjecture in the paper \cite{Shi_Conjecture} and the final step of the proof was made in the paper~\cite{GrechMazVas}.) Moreover, for every nonabelian simple group $G$, apart from a finite number of sporadic, alternating and exceptional groups and apart from several series of classical groups of small dimensions, if  $H$ is a group such that $\omega(H)=\omega(G)$, then $H$ is an almost simple group with socle isomorphic to $G$, therefore {\it almost all finite simple groups are almost recognizable by spectrum}. This result was obtained in a large number of papers and is still in progress in sense of investigation of recognition by spectrum of low-dimensional classical groups. We recommend surveys of the results in this area in papers \cite{Grechkoseeva,Grech_Vas}; moreover, we recommend the following paper by A.~V.~Vasil'ev~\cite{Vasil_2015}, where an important contribution to the solution of the problem of recognition by spectrum for simple classical groups of Lie type was made, and some recent papers as \cite{Staroletov1,Grech_Skres,GrechVasZvezd,Grech_Zvezd}. Moreover, some almost simple groups are recognizable by spectrum, for example, see~\cite{Gorsh_Grish}.

\smallskip

It is easy to see that if a group is recognizable by its Gruenberg--Kegel graph, then it is recognizable by its spectrum. The converse does not hold since, for example, $\Gamma(A_5)=\Gamma(A_6)$, but the group $A_5$ is recognizable by spectrum (see~\cite{Shi}) while the group $A_6$ is not. Some information about groups with the same spectrum as $A_6$ can be found in \cite[Theorem~2]{Lytkin}.

\smallskip

In section~\ref{Rec=>AS}, we prove the following criterion of unrecognizability of a group by its Gruenberg--Kegel graph.

\begin{theorem}\label{UnrecGK} Let $G$ be a group. The following statements are equivalent{\rm:}
\begin{itemize}
\item[$(1)$] there exist infinitely many groups $H$ such that $\Gamma(G)=\Gamma(H)${\rm;}
\item[$(2)$] there exists a group $H$ with non-trivial solvable radical such that $\Gamma(G)=\Gamma(H)$.
\end{itemize}
\end{theorem}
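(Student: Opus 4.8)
The plan is to prove the two implications separately; $(2)\Rightarrow(1)$ is routine and $(1)\Rightarrow(2)$ carries all the weight. For $(2)\Rightarrow(1)$, suppose $H$ has non-trivial solvable radical and $\Gamma(H)=\Gamma(G)$. Since $(2)$ of Theorem~\ref{UnrecSpec} holds for $H$, its part $(1)$ yields infinitely many pairwise non-isomorphic groups $H_1,H_2,\dots$ with $\omega(H_i)=\omega(H)$; as equality of spectra implies equality of Gruenberg--Kegel graphs, each $\Gamma(H_i)=\Gamma(H)=\Gamma(G)$, which is $(1)$.

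For $(1)\Rightarrow(2)$ I would argue the contrapositive: assuming that \emph{no} group with non-trivial solvable radical has Gruenberg--Kegel graph $\Gamma:=\Gamma(G)$, I show that only finitely many groups $H$ satisfy $\Gamma(H)=\Gamma$. The engine is a construction I expect to isolate as a lemma. Call a prime $p$ a \emph{dominating vertex} of $\Gamma$ if it is adjacent to every other vertex. If $\Gamma$ has a dominating vertex $p$ and $H$ realizes $\Gamma$, set $W=V\rtimes H$ where $V$ is any non-trivial $\mathbb{F}_pH$-module (e.g.\ the regular one). Then $\pi(W)=\pi(H)$ since $p\in\pi(H)$; no edge of $\Gamma(H)$ is lost because $H\le W$; and the order of an element $g\in W$ with image $h\in H$ is divisible by $|h|$ and divides $p^{b}|h|$, so its prime set lies in $\pi(\langle h\rangle)\cup\{p\}$. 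Hence any hypothetical new edge must involve $p$, which is impossible as $p$ is already adjacent to everything; thus $\Gamma(W)=\Gamma$, while the non-trivial normal $p$-subgroup $V$ gives $W$ a non-trivial solvable radical. Under our assumption this is a contradiction, so \emph{$\Gamma$ has no dominating vertex}.

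Now let $H$ be any group with $\Gamma(H)=\Gamma$; by assumption its solvable radical is trivial, so $F(H)=1$ and $N:=\Soc(H)=F^{*}(H)=S_1\times\cdots\times S_k$ is a direct product of non-abelian simple groups with $N\le H\le\Aut(N)$ and $C_H(N)=1$. The crucial point is to bound $k$. By the Feit--Thompson theorem $2\mid|S_i|$ for every $i$, so if $k\ge 2$ then combining the prime $2$ from one factor with an odd prime from another produces elements of order $2q$ for every odd socle prime $q$, and the cross-factor elements of order $qr$ make $2$ adjacent to every prime dividing $N$. I would then upgrade this to: $2$ is a genuine dominating vertex of $\Gamma$. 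The only thing to rule out is a prime dividing $|H|$ but not $|N|$ and non-adjacent to $2$, which I expect to exclude using the structure of the outer automorphism groups of simple groups and of the permutation action on the isomorphic factors (from the preliminaries). This contradicts the previous paragraph, forcing $k=1$: every realization of $\Gamma$ is almost simple.

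Finiteness is then immediate: since $\pi(S_1)\subseteq\pi(H)=V(\Gamma)$ is a fixed finite set of primes, the classification of finite simple groups leaves only finitely many possibilities for the socle $S_1$, and for each one there are only finitely many $H$ with $S_1\le H\le\Aut(S_1)$ because $|H|\le|\Aut(S_1)|$. Hence only finitely many groups realize $\Gamma$, which is $\neg(1)$, completing the contrapositive and the theorem. I expect the main obstacle to be exactly the reduction $k=1$: controlling the primes that can enter through outer automorphisms or through the permutation of isomorphic factors, so as to turn ``$2$ is adjacent to all socle primes'' into ``$2$ is a dominating vertex''. By contrast, the module construction and the finiteness input from the classification of finite simple groups are comparatively soft, and I note that the same analysis also explains the companion facts that in the finite case all realizing groups are almost simple, $2$ is non-adjacent to some odd vertex, and $\Gamma$ has three pairwise non-adjacent vertices.
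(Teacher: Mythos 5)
Your $(2)\Rightarrow(1)$ argument is exactly the paper's (apply Theorem~\ref{UnrecSpec} to $H$ itself), and your overall plan for $(1)\Rightarrow(2)$ --- rule out a dominating vertex via a solvable-radical construction, deduce that every realization is almost simple, then count using the classification --- is also the paper's architecture; your $V\rtimes H$ construction with the regular $\mathbb{F}_pH$-module correctly generalizes the paper's simpler observation that $\Gamma(C_2\times G)=\Gamma(G)$ when $2$ is adjacent to all odd vertices. The genuine gap is the step you yourself flag: upgrading ``$2$ is adjacent to every prime dividing $|N|$'' (which your Feit--Thompson cross-factor argument does give when $k\ge 2$) to ``$2$ is a dominating vertex of $\Gamma$''. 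For that you must show that every prime $r$ dividing $|H|$ but not $|N|$ is adjacent to $2$. Such primes enter through $H/N\le\Out(N)$, and the required adjacency amounts to a fixed-point statement: for $x\in H$ of $r$-power order permuting the socle factors one needs $C_N(x)$ to have even order (a diagonal-subgroup computation that works cleanly only when the image of $x$ in the symmetric group has order equal to $|x|$, and otherwise reduces to the next case), and for $x$ inducing outer automorphisms on the factors one needs that the fixed-point subgroup in $S_i$ of a prime-order automorphism of a non-abelian simple group has even order. The latter is a CFSG-level fact requiring case analysis over the families of simple groups; it is not available in the paper's preliminaries as a standalone tool, and ``the structure of the outer automorphism groups'' does not yield it by itself. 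So, as written, the key implication is unproved.

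What you are attempting to reprove by hand is precisely Vasil'ev's theorem, stated in the paper as Lemma~\ref{VasAnd}: if $H$ is non-solvable with $t(2,H)\ge 2$, then $S\unlhd H/S(H)\le\Aut(S)$ for a single non-abelian simple group $S$ --- i.e.\ your reduction $k=1$. The paper closes your gap simply by quoting this lemma: once the $C_2\times G$ trick shows $2$ is non-adjacent to some odd vertex, every realization $H$ with trivial solvable radical satisfies $t(2,H)\ge 2$, hence is almost simple with $\pi(\Soc(H))\subseteq\pi(G)$ by Lemma~\ref{VasAnd}, and Proposition~\ref{NumSimple} bounds the number of possible socles. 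A further small caution: the finiteness of the set of simple groups $S$ with $\pi(S)$ inside a fixed finite set is not an immediate corollary of the classification, as your last paragraph suggests; it needs Zsigmondy-type arguments, which is exactly the content of Lemma~\ref{LieType} and Proposition~\ref{NumSimple}. Replacing your $k\ge 2$ analysis by an appeal to Lemma~\ref{VasAnd} would turn your outline into a complete proof essentially identical to the paper's.
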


Moreover, in section~\ref{Rec=>AS}, we characterize finite groups which are almost recognizable by Gruenberg--Kegel graph. We prove the following theorem.

\begin{theorem}\label{CoGrAlmSimple} Let $G$ be a group such that $G$ is $k$-recognizable by Gruenberg--Kegel graph for some non-negative integer $k$. Then the following conditions hold{\rm:}
\begin{itemize}
\item[$(1)$] $G$ is almost simple{\rm;}
\item[$(2)$] each group $H$ with $\Gamma(H)=\Gamma(G)$ is almost simple{\rm;}
\item[$(3)$] each vertex of $\Gamma(G)$ is non-adjacent to at least one other vertex, in particular, $2$ is non-adjacent to at least one odd prime in $\Gamma(G)${\rm;}
\item[$(4)$] $\Gamma(G)$ contains at least $3$ pairwise non-adjacent vertices.
\end{itemize}
\end{theorem}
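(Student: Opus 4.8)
The plan is to prove Theorem~\ref{CoGrAlmSimple} as a structural refinement of Theorem~\ref{UnrecGK}, deducing everything from the hypothesis that $G$ is $k$-recognizable (so only finitely many groups share its Gruenberg--Kegel graph). The logical skeleton is: first establish that \emph{every} group $H$ with $\Gamma(H)=\Gamma(G)$ must have trivial solvable radical, then bootstrap from ``trivial radical'' to ``almost simple,'' and finally read off the two graph-theoretic consequences (3) and (4) from the almost-simple structure together with the Gruenberg--Kegel theorem.

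First I would prove (2), from which (1) follows since $G$ itself is one such $H$. The key observation is the contrapositive of Theorem~\ref{UnrecGK}: if some $H$ with $\Gamma(H)=\Gamma(G)$ had a non-trivial solvable radical, then there would be infinitely many groups with Gruenberg--Kegel graph equal to $\Gamma(G)$, contradicting $k$-recognizability. Hence every such $H$ has trivial solvable radical. The next step is to upgrade ``trivial solvable radical'' to ``almost simple.'' A group $H$ with trivial solvable radical has a socle $\Soc(H)=S_1\times\cdots\times S_m$ that is a direct product of nonabelian simple groups, and $H$ embeds into $\Aut(\Soc(H))$. I would argue that if $m\ge 2$, or more generally if $\Soc(H)$ is not simple, one can build infinitely many pairwise non-isomorphic groups with the same Gruenberg--Kegel graph (for instance by varying multiplicities or by taking suitable products/extensions that do not change the set of element orders relevant to adjacencies), again contradicting finiteness; this forces $\Soc(H)$ to be a single nonabelian simple group, i.e.\ $H$ is almost simple. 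The main obstacle is exactly here: one must show that a non-simple socle, or more than one simple direct factor, genuinely produces infinitely many distinct groups with the \emph{same} graph, which requires care because adding factors can create new vertices or new edges. I expect the argument to route through Theorem~\ref{UnrecGK} by exhibiting, from a non-almost-simple $H$, an auxiliary group with non-trivial solvable radical and the same graph (e.g.\ a Frobenius or $2$-Frobenius type construction, or a split extension of an elementary abelian group), thereby reducing to the already-proved equivalence rather than counting directly.

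Granting (1) and (2), the graph-theoretic conclusions (3) and (4) should follow from the almost-simple structure. For (4), since $G$ is almost simple with socle a simple group $S$, its Gruenberg--Kegel graph is controlled by $\Gamma(S)$; the point is that $\Gamma(G)$ cannot have too few independent vertices, for otherwise (by the Gruenberg--Kegel theorem and the classification of groups with disconnected prime graph) one could again realize $\Gamma(G)$ by a solvable or Frobenius-type group, contradicting recognizability. Concretely, if $\Gamma(G)$ had independence number at most $2$, a solvable group (whose prime graph has a restricted structure, e.g.\ by the Gruenberg--Kegel description of solvable groups having at most two components and satisfying the $3$-primes condition) could reproduce it, forcing infinitely many realizations. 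So recognizability forces at least three pairwise non-adjacent vertices.

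Finally, for (3) I would use the same philosophy specialized to the prime $2$: I claim $2$ must be non-adjacent to some odd prime. If instead $2$ were adjacent to every odd vertex, then $\Gamma(G)$ would have a dominating vertex (namely $2$), and I would argue that such a graph can be realized by a group with non-trivial solvable radical---for example a suitable split extension in which a $2$-group (or a group with an involution centralizer large enough to connect $2$ to all odd primes) acts, producing elements of order $2p$ for every odd $p$ while allowing the solvable radical to be enlarged freely. This contradicts $k$-recognizability via Theorem~\ref{UnrecGK}, so $2$ is non-adjacent to at least one odd prime. I expect (3) and (4) to be comparatively routine once the almost-simple reduction in (1)--(2) is secured; the genuine difficulty of the theorem is concentrated in showing that anything other than the almost-simple case yields infinitely many groups with the prescribed graph.
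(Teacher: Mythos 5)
Your overall strategy---route every conclusion through Theorem~\ref{UnrecGK} by exhibiting, from any failure of the conclusion, a group with non-trivial solvable radical and the same graph---is exactly the paper's strategy, and your sketch of (3) is essentially the paper's argument (the concrete witness is simply $G\times C_2$: if $2$ is adjacent to every odd vertex then $\Gamma(G\times C_2)=\Gamma(G)$ and $S(G\times C_2)\neq 1$). However, there is a genuine gap precisely at the point you flagged as the main obstacle, and it is not fixable by the constructions you suggest. The upgrade from ``trivial solvable radical'' to ``almost simple'' is not obtained in the paper by building infinitely many groups from a non-simple socle, nor by any Frobenius-type auxiliary construction; it is imported wholesale from Vasil'ev's structure theorem (Lemma~\ref{VasAnd}, i.e.\ Propositions~2 and~3 of~\cite{Vasil_2005}): if $H$ is non-solvable and $t(2,H)\ge 2$, then $H/S(H)$ is almost simple. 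This has a consequence for the architecture of the proof that your proposal gets backwards: statement (3) must be proved \emph{before} (1) and (2), because $t(2,H)=t(2,G)\ge 2$ is a hypothesis of Vasil'ev's lemma. That hypothesis is also exactly what excludes the troublesome examples: a group $H$ with $S(H)=1$ and non-simple socle (such as $S\times S$) has $2$ adjacent to every odd prime of $\pi(\Soc(H))$, so such groups never satisfy $t(2,H)\ge2$; ruling them out without this dichotomy would require redoing Vasil'ev's analysis, which your sketch does not supply.

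There is a second gap in your argument for (4). It is not true that every graph with independence number at most $2$ is the Gruenberg--Kegel graph of a solvable group: by the characterization of prime graphs of solvable groups in~\cite{Gruber_etal}, the complement must be not only triangle-free but also $3$-colorable. So ``$t(G)<3$ implies a solvable group reproduces $\Gamma(G)$'' is false as a statement about abstract labeled graphs, and your appeal to the restricted structure of solvable prime graphs does not close this. What the paper actually invokes is a theorem specific to almost simple groups, namely Lemmas~7--14 of~\cite{Gors_Mas2}: for every almost simple group $G$ with $t(G)<3$ there exists a solvable group $T$ with $\Gamma(T)=\Gamma(G)$. This rests on a case analysis over the almost simple groups with small independence number and cannot be replaced by a general graph-theoretic observation. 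Once that result is cited, (4) follows through Theorem~\ref{UnrecGK} exactly as you intend; without it, your step (4), like your step (1)--(2), asserts the key lemma rather than proving it.
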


Note that a group with non-simple socle can be recognizable by spectrum, therefore Theorem~\ref{CoGrAlmSimple} can not be generalized for recognition by spectrum. Up to a recent moment there were only two examples of groups with non-simple socle which are recognizable by spectrum, namely, $\Sz(2^7)\times\Sz(2^7)$ (see~\cite{Mazurov_1997}) and $J_4\times J_4$ (see~\cite{Gorsh_Mas}). Recently, I.~B.~Gorshkov has proved that if $m>5$, then the group $\PSL_{2^m}(2)\times \PSL_{2^m}(2)\times \PSL_{2^m}(2)$ is recognizable by spectrum, a preprint of this paper is available on the arXiv (see~\cite{Gorshkov}).

\medskip

We conclude from Theorem~\ref{CoGrAlmSimple} that if a group is recognizable by its Gruenberg--Kegel graph, then the group is almost simple. The following problem naturally arises.
\medskip

\begin{problem} Let $G$ be an almost simple group. Decide whether $G$ is
recognizable, $k$-recognizable for some integer $k>1$, or unrecognizable by its Gruenberg--Kegel graph.
\end{problem}

There are some known results on recognition of a group by its Gruenberg--Kegel graph. Here we again will mention some results which are interesting from our point of view; we do not pretend to provide a complete survey of this research area.

The first result on recognition of a group by its Gruenberg--Kegel graph was obtained by G.~Chen~\cite{Chen}, where it was proved that if $S$ is a sporadic simple group and $H$ is a group with $|H|=|S|$ and $\Gamma(H)=\Gamma(S)$, then $H \cong S$. Later M.~Hagie~\cite{Hagie_2003} described groups with the same Gruenberg--Kegel graphs as simple sporadic groups. In particular, Hagie has proved that the groups $J_1$, $M_{22}$, $M_{23}$, $M_{24}$, and $Co_2$ are recognizable by their Gruenberg--Kegel graphs, the group $M_{11}$ is $2$-recognizable, and the groups $M_{12}$ and $J_2$ are unrecognizable by their Gruenberg--Kegel graphs; moreover, if $S$ is one of groups $O'N$, $Ly$, $Fi_{23}$, $Fi_{24}$, $M$, $BM$, $Th$, $Ru$, and $Co_1$, then $S$ is {\it quasirecognizable} by its Gruenberg--Kegel graph; that is, any group $H$ with $\Gamma(H)=\Gamma(S)$ has a unique nonabelian composition factor which is isomorphic to the group $S$. In 2006, A.~V.~Zavarnitsine~\cite{Zavarnitsine_2006} proved that the group $J_4$ is recognizable by its Gruenberg--Kegel graph, moreover, $J_4$ is the unique group whose Gruenberg--Kegel graph has exactly $6$ connected components. Later, based on Hagie's results, A.~S.~Kondrat'ev~\cite{KondrRecSpor1} has proved that the group $Ru$ is recognizable by its Gruenberg--Kegel graph, the group $HN$ is $2$-recognizable, the group $Fi_{22}$ is $3$-recognizable, the groups $He$, $McL$, and $Co_3$ are unrecognizable by their Gruenberg--Kegel graphs. Recently A.~S.~Kondrat'ev~\cite{KondrRecSpor2} has proved that the groups $J_3$, $Suz$, $O'N$, $Ly$, $Th$, $Fi_{23}$, and $Fi_{24}$ are recognizable by their Gruenberg--Kegel graphs, and the group $HS$ is $2$-recognizable. Thus, at the time of writing, only three large sporadic groups are left for which recognition by the Gruenberg–Kegel graph is not completely settled: $Co_1$, $B$, and $M$. Due to Hagie's result mentioned above, these groups were known to be  quasirecognizable by Gruenberg--Kegel graph. After this paper was submitted to the journal, M.~Lee and T.~Popiel have proved that finite simple sporadic groups $Co_1$, $B$, and $M$ are recognizable by Gruenberg-Kegel graph\footnote{M.~Lee, T.~Popiel, $M$, $B$ and $Co_1$ are recognisable by their prime graphs,  arXiv:2107.12755v1 [math.GR].}.

M.~Hagie~\cite{Hagie_2003} has proved that the group $\PSL_2(11)$ is $2$-recognizable from its Gruenberg--Kegel graph. In \cite{3Khosravi_2007} Bahman Khosravi, Behman Khosravi, and Behrooz Khosravi have proved that  if $p>7$ is a Mersenne prime or a Fermat prime, then the group $\PSL_2(p)$ is recognizable by its Gruenberg--Kegel graph. In \cite{3Khosravi_2007_2} the same authors proved that  if $p > 11$ is a prime number and $p \not \equiv 1 \pmod{12}$, then the group $\PSL_2(p)$ is recognizable by its Gruenberg--Kegel graph. A.~Khosravi and B.~Khosravi~\cite{ABKhosravi} have proved that if $p$ is a prime not in $\{2, 3, 7\}$, then the group $\PSL_2(p^2)$ is $2$-recognizable by its Gruenberg--Kegel graph. In 2008, B.~Khosravi \cite{BKhosravi} proved that if $q = p^k$, where $k > 1$ is odd and $p$ is an odd prime number, then the group $\PSL_2(q)$ is recognizable by its Gruenberg--Kegel graph. Moreover, Z.~Akhlaghi, B.~Khosravi, and M.~Khatami~\cite{Akhlaghi_Khosravi_Khatami_2010}
proved that if $p$ is an odd prime and $k > 1$ is odd, then the group $PGL_2(p^k)$ is recognizable by its Gruenberg--Kegel graph,  A.~Mahmoudifar~\cite{Mahmoudifar} has proved that the group $PGL_2(25)$ is recognizable by its Gruenberg--Kegel graph.

The question of recognition by Gruenberg--Kegel graph of alternating and symmetric groups was studied in~\cite{Khosravi_Moghanjoghi,Gorsh_Staroletov,Staroletov2}.

A.~V.~Zavarnitsine~\cite{Zavarnitsine_2006} has proved that the group $\PSL_3(7)$ is $2$-recognizable by Gruenberg--Kegel graph. Later the question of recognition by Gruenberg--Kegel graph for simple groups $S$ such that $|\pi(S)|\in \{3,4\}$ was studied in~\cite{KondrKhr1,KondrKhr2}.

A remarkable result is that the group $\PSL_{16}(2)$ is recognizable by its Gruenberg--Kegel graph. This result was obtained by B.~Khosravi, B.~Khosravi, and B.~Khosravi~\cite{3Khosravi}, however, in this paper there was a ﬂaw in the proof of Lemma~3.4. A complete proof of the result was obtained later by A.~V.~Zavarnitsine~\cite{Zavarnitsine_2010}. The group $\PSL_{16}(2)$ was the first known example of a group with connected Gruenberg--Kegel graph which is recognizable by its Gruenberg--Kegel graph.

Z.~Momen and B.~Khosravi \cite{Momen_Khosravi} have proved that groups $B_p(3)$ and $C_p(3)$, where $p>3$ is an odd prime, are $2$-recognizable by Gruenberg--Kegel graph.  Later M.~F.~Ghasemabadi, A.~Iranmanesh, and N.~Ahanjideh~\cite{Ghasemabadi_Iranmanesh_Ahanjideh} proved that if $n> 5$ is an odd number, then groups $B_n(3)$ and $C_n(3)$ are $2$-recognizable by Gruenberg--Kegel graph.
A.~Babai and B.~Khosravi~\cite{Babai_Khosravi_2011} have proved that the group ${^2}D_{2^m+1}(3)$ is recognizable  by Gruenberg--Kegel graph.
Later in~\cite{Ghasemabadi_Iranmanesh_Ahanjideh_2012} M.~F.~Ghasemabadi, A.~Iranmanesh, and N.~Ahanjideh proved that if $n \ge 5$ is odd, then the group ${^2}D_n(3)$ is recognizable by Gruenberg--Kegel graph. M.~F.~Ghasemabadi and N.~Ahanjideh~\cite{Ghasemabadi_Ahanjideh} have proved that if $n \ge 6$ is even, then the group $D_n(3)$ is recognizable by its Gruenberg--Kegel graph.
Z.~Akhlaghi, M.~Khatami, and B.~Khosravi \cite{Akhlaghi_Khatami_Khosravi} have proved that if $p$ is an odd prime, then the group $D_p(5)$ is recognizable by its Gruenberg--Kegel graph, and the group $D_p(2)$ is quasirecognizable. Later A.~Babai and B.~Khosravi~\cite{Babai_Khosravi} proved that if $n$ is odd, then the group $D_n(5)$ is recognizable by its Gruenberg--Kegel graph and if $n$ is even, then $D_n(5)$ is quasirecognizable.

A.~V.~Zavarnitsine~\cite{Zavarnitsine_2006} has proved that the groups $G_2(7)$ and ${^2}G_2(q)$ for each $q$ are recognizable by Gruenberg--Kegel graph. A.~S.~Kondrat'ev~\cite{Kondrat'evE7(2)andE7(3),Kondrat'ev2E6(2)} has proved that the groups $E_7(2)$, $E_7(3)$, and ${^2}E_6(2)$ are recognizable by Gruenberg--Kegel graph. W.~Guo, A.~S.~Kondrat'ev, and the second author~\cite{GuoKondrat'evMaslova} proved that the group $E_6(2)$ is recognizable by Gruenberg--Kegel graph. Recognizability of of groups $E_6(3)$ and $^2E_6(3)$ by Gruenberg--Kegel graph has been recently proved by A.~P.~Khramova, the second author, V.~V.~Panshin, and A.~M.~Staroletov\footnote{A.~P.~Khramova, N.~V.~Maslova, V.~V.~Panshin, and A.~M.~Staroletov, Recognition of groups $E_6(3)$ and ${^2}E_6(3)$ by Gruenberg--Kegel graph, in preparation.} in frame of realization of a project ''Gruenberg--Kegel graphs of finite groups'' of The Great Mathematical Workshop organized by Mathematical Center in Akedemgorodok (Novosibirsk, Russia) on July 12--17 and August 16--21, 2021 with an intermodule work in between.

One more remarkable result was obtained in 2013 by A.~V.~Zavarnitsine~\cite{Zavarnitsine_2013}, it was proved that if $G$ is a finite group whose Gruenberg--Kegel graph has exactly $5$ connected components, then $G \cong E_8(q)$, where $q \equiv 0,1,4\pmod{5}$. In particular, groups $E_8(q)$, where $q \equiv 0,1,4\pmod{5}$, are almost recognizable by Gruenberg--Kegel graph.

There are some other results on recognition of a simple group by its Gruenberg--Kegel graph, in particular, some groups of Lie type are known to be quasirecognizable by their Gruenberg--Kegel graphs. For example, we recommend to see papers~\cite{Akhlaghi_Khatami_Khosravi_2}, \cite{Amiri_Asboei_Iranmanesh_Tehranian_1}, \cite{Amiri_Asboei_Iranmanesh_Tehranian_2}, \cite{Babai_Khosravi_2012}, \cite{Babai_Khosravi_2014}, \cite{Babai_Khosravi_2015}, \cite{Beynekalae_Iranmanesh_Ghasemabadi} \cite{Ghasemabadi_Iranmanesh}, \cite{Ghasemabadi_Iranmanesh_2}, \cite{BKhosravi_2}, \cite{Khosravi_Akhlaghi_Khatami}, \cite{Khosravi_Babai_2011}, \cite{Khosravi_Khosravi_Oskouei}, \cite{Khosravi_Moradi_1}, \cite{Khosravi_Moradi_2}, \cite{Khosravi_Moradi_3}, \cite{Mahmoudifar_Khosravi}, \cite{Momen_Khosravi_2}, \cite{Momen_Khosravi_3}, \cite{Moradi_Darafsheh_Iranmanesh}, \cite{Mosavi_Ahanjideh}, \cite{Nosratpour_Darafsheh}, \cite{Zhang_Shi_Shen},
and other papers by Behrooz Khosravi et~al., Anatoly Kondrat'ev et~al, Neda Ahanjideh et~al., W.~Shi et~al, and so on.

\medskip
It is known that if $q$ is odd and $n\ge 3$, then $\Gamma(B_n(q))=\Gamma(C_n(q))$ and $|B_n(q)|=|C_n(q)|$ but these groups are not isomorphic.  Thus, it is natural to consider the following problem.

\begin{problem}\label{123} For which simple groups $S$ is the following true:
ift $G$ is a group with $\Gamma (G)$ = $\Gamma (S)$ and $|G| = |S|$, then $G$
is isomorphic to $S$\/{\rm?}
\end{problem}

Problem~\ref{123} was formulated by Behrooz Khosravi in his survey paper~\cite[Question~4.2]{BKhosravi_survey}, by A.S. Kondrat'ev in frame of the open problems session of the 13th School–Conference on Group Theory Dedicated to V. A. Belonogov’s 85th Birthday (see~\cite[Question~4]{Maslova_Conference}), and was independently formulated by W.~Shi in a partial communication with the second author.

\medskip

Let $\Gamma$ be a simple graph whose vertices are labeled by pairwise distinct primes.
We call $\Gamma$ a {\it labeled graph}.
Note that there are examples of labeled graphs which are not equal (and not even isomorphic as abstract graphs) to Gruenberg--Kegel graphs of groups. For example, by the Gruenberg-Kegel Theorem (see Lemma~\ref{Gruenberg--Kegel theorem} in section~\ref{Prelim}), any graph with at least $7$ connected components is not isomorphic to the Gruenberg-Kegel graph of a finite group. To discuss the question of realizability of a graph as Gruenberg--Kegel graph of a group see, for example, papers~\cite{GKMK,Gors_Mas2,Gruber_etal,Maslova_Pagon}. However, the results which we obtain in section~\ref{Rec=>AS} allow to estimate an upper bound for a number of groups with the same Gruenberg--Kegel graph. In section~\ref{NumbRecBound}, we prove the following theorem.

\begin{theorem}\label{NumberPD} There exists a function $F(x)=O(x^7)$ such that for each labeled graph $\Gamma$ the following conditions are equivalent{\rm:}
\begin{itemize}
\item[$(1)$] there exist infinitely many groups $H$ such that $\Gamma(H)=\Gamma${\rm;}
\item[$(2)$] there exist more then $F(|V(\Gamma)|)$ groups $H$ such that $\Gamma(H)=\Gamma$, where $V(\Gamma)$ is the set of the vertices of $\Gamma$.
\end{itemize}
\end{theorem}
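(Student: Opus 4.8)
The plan is to leverage Theorem~\ref{UnrecGK} together with Theorem~\ref{CoGrAlmSimple} to reduce the problem to a counting question about almost simple groups. The equivalence $(1)\Rightarrow(2)$ is trivial for any choice of $F$, so the whole content lies in $(2)\Rightarrow(1)$; equivalently, in proving the contrapositive that if $\Gamma$ is the Gruenberg--Kegel graph of only finitely many groups, then the number of such groups is at most $F(|V(\Gamma)|)$ for some explicit $F$ with $F(x)=O(x^7)$. So first I would fix a labeled graph $\Gamma$ on $n=|V(\Gamma)|$ vertices that is realizable by only finitely many groups, and apply Theorem~\ref{CoGrAlmSimple}: every group $H$ with $\Gamma(H)=\Gamma$ must be almost simple. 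Thus it suffices to bound the number of almost simple groups whose Gruenberg--Kegel graph equals this fixed $\Gamma$.

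The key step is to bound the number of almost simple groups $H$ with $\pi(H)=V(\Gamma)$. Since $H$ is almost simple, its socle is a nonabelian simple group $S$ with $\pi(S)\subseteq V(\Gamma)$, and $H/S$ embeds in $\Out(S)$. The plan is to enumerate the possible socles $S$ by the number of their prime divisors, using the classification of finite simple groups. For the generic families one needs the fact that a simple group of Lie type of rank $r$ over $\GF(q)$ with $q=p^f$ has a number of prime divisors growing with the field size and the rank, so that having $\pi(S)\subseteq V(\Gamma)$ with $|V(\Gamma)|=n$ forces both the characteristic $p$, the rank, and the field exponent $f$ into bounded ranges in terms of $n$. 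Concretely, the order of $S$ is a product of a power of $p$ and of cyclotomic-type factors $q^i-1$ (and $q^i+1$), and each such factor introduces primitive prime divisors (by Zsygmondy's theorem, which I expect to be among the number-theoretic lemmas in Section~\ref{Prelim}) that are pairwise distinct for distinct $i$; hence the number of distinct prime divisors of $|S|$ grows roughly like the rank, bounding the rank by $O(n)$, and a further Zsygmondy argument in the field exponent bounds $f$ polynomially in terms of $n$ and the rank. Counting the choices of $(p, f, \text{rank}, \text{twisted type})$ then yields a polynomial bound; the alternating and sporadic groups contribute only $O(n)$ and $O(1)$ respectively.

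After bounding the number of possible socles $S$, I would multiply by the number of almost simple groups with a given socle, which is $|\Out(S)|$. For $S$ of Lie type over $\GF(p^f)$, $|\Out(S)|$ is a product of the diagonal, field, and graph automorphism contributions, dominated by a factor of size $O(f)$ times a bounded contribution; since $f$ is already polynomially bounded in $n$, this multiplies the count by at most a polynomial factor. Collecting the contributions, the dominant term comes from the classical families, where the product of (number of ranks) $\times$ (number of admissible field exponents) $\times$ (number of twisted forms) $\times$ $|\Out(S)|$ must be shown to be $O(n^7)$. The honest bookkeeping to arrive at exponent $7$ rather than some larger constant is where the main obstacle lies: one has to track carefully how each of the parameters (characteristic, rank, field exponent, outer automorphism multiplicity) is constrained by $n=|V(\Gamma)|$, using that distinct Zsygmondy primes consume distinct vertices of $\Gamma$, and then optimize the resulting product. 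I expect the characteristic $p$ to contribute a factor $O(n)$ (it lies in $V(\Gamma)$), the rank a factor $O(n)$, the field exponent a factor that is polynomial in $n$, and $|\Out(S)|$ a further polynomial factor, and the delicate part is verifying that these combine to a genuine $O(n^7)$ and not worse. The final statement then follows by taking $F$ to be the maximum over the various families of these polynomial bounds, so that any $\Gamma$ realized by more than $F(n)$ groups cannot be realized by only finitely many, hence is realized by infinitely many by Theorem~\ref{UnrecGK}.
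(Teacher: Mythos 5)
Your overall skeleton matches the paper's: reduce via Theorems~\ref{UnrecGK} and~\ref{CoGrAlmSimple} to counting almost simple groups whose prime divisors lie in $V(\Gamma)$, bound the number of possible socles by Zsigmondy-type arguments (this part is essentially Proposition~\ref{NumSimple}, giving $O(n^3)$ socles), and then bound the number of almost simple groups sitting over each socle. However, the last step contains a genuine gap. You claim that the field exponent $f$ is ``polynomially bounded in $n$'' and then multiply the socle count by $|\Out(S)|$. Neither claim survives scrutiny. First, Zsigmondy's theorem bounds only the \emph{number of admissible values} of $f$ (at most $n+1$, since distinct exponents give distinct primitive prime divisors) and the \emph{number of divisors} $d(f)$ of $f$; it gives no bound whatsoever on the size of $f$ itself. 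Indeed, for $S=\PSL_2(2^{127})$ one has $\pi(S)=\{2,\,3,\,2^{127}-1,\,(2^{127}+1)/3\}$ (both large factors are prime), so $|\pi(S)|=4$ while $|\Out(S)|=127$; any provable bound on $f$ in terms of $|\pi(S)|$ would require lower bounds on the number of prime factors of $2^f\pm1$, which is far out of reach. So $|\Out(S)|$ is \emph{not} polynomially bounded in $n$, and your multiplier breaks down. Second, even setting that aside, the number of almost simple groups with socle $S$ is the number of \emph{subgroups} of $\Out(S)$, not its order, and for a general group the subgroup count can exceed the order.

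The point that rescues the theorem --- and this is exactly what the paper does in Proposition~\ref{OutSimple} --- is that one must count subgroups of $\Out(S)$ rather than elements, because the subgroup count is small even when $|\Out(S)|$ is huge. The field-automorphism group is cyclic of order $l$ (or $2l$, $3l$), so its subgroups correspond to divisors of $l$, and Lemma~\ref{LieType}$(3)$ gives $d(l)\le n+1$ (each divisor $m$ of $l$ forces a distinct primitive prime divisor of $p^m-1$ into $V(\Gamma)$). Goursat's lemma then bounds the number of subgroups $T$ of the field-graph part $G_1\times G_2$ linearly in $n$, and the diagonal part $C$ (cyclic of order $O(n)$) is handled by the supplement-counting Lemmas~\ref{GensNumber} and~\ref{SupplNumber}: each subgroup mapping onto a $2$-generated $T$ is one of at most $|C|^3=O(n^3)$ supplements. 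This yields $O(n^4)$ almost simple groups per socle, hence $O(n^3)\cdot O(n^4)=O(n^7)$ in total. Without replacing your $|\Out(S)|$ factor by this subgroup count, your argument does not produce any polynomial bound, let alone $O(n^7)$.
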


The estimate which we obtain in Theorem~\ref{NumberPD} for the function $F$  can certainly be improved. The following problem is of interest.

\begin{problem}\label{prob2}
Find the exact value for the function $F$, or at least a better upper bound.
\end{problem}

Recently M.~A.~Grechkoseeva and A~V.~Vasil'ev\footnote{M.~A.~Grechkoseeva, A.~V.~Vasil'ev, On the prime graph of a finite group with unique nonabelian composition factor, arXiv:2109.05860v1 [math.GR].} generalizing our ideas and using their new results on Gruenberg-Kegel graphs of finite groups with unique nonabelian composition factor have improved the upper bound in Theorem~\ref{NumberPD} to $O(x^5)$.

\smallskip

As a part of the solution of Problem~\ref{prob2}, the following problem arises.

\begin{problem} Find an improved upper bound for the number of almost simple groups with the same Gruenberg--Kegel graph.
\end{problem}

Note that by \cite{Zavarnitsine_2003,Zavarnitsine_2006_1}, there is no a constant $k$ such that for any almost simple group $G$, the number of pairwise non-isomorphic almost simple groups $H$ such that $\Gamma(G)=\Gamma(H)$ is at most $k$. However, if $G$ is simple, then A.~V.~Vasil'ev has conjectured that there are at most $4$ simple groups $H$ with $\Gamma(G)=\Gamma(H)$; see Problem~16.26 in~\cite{Kourovka}.

\medskip

We say that the group $G$ is {\it recognizable by isomorphism type of its Gruenberg--Kegel graph} if for each group~$H$, graphs $\Gamma(G)$ and $\Gamma(H)$ are isomorphic as abstract graphs if and only if $G \cong H$.

Since by A.~V.~Zavarnitsine~\cite{Zavarnitsine_2006}, the sporadic group $J_4$ is the unique group whose Gruenberg--Kegel graph has exactly $6$ connected components, we have that $J_4$ is recognizable by isomorphism type of its Gruenberg--Kegel graph. We construct some more examples of simple groups which are recognizable by isomorphism type of Gruenberg--Kegel graph: in section~\ref{UnlabGr}, we prove the following theorem.

\begin{theorem}\label{RecIsomType} Simple groups ${^2}G_2(27)$ and $E_8(2)$ are recognizable by isomorphism type of Gruenberg--Kegel graph. In particular, the group $E_8(2)$ is recognizable by its Gruenberg--Kegel graph.
\end{theorem}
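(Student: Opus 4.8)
The plan is to pass from the abstract isomorphism type of the graph to its realization as a Gruenberg--Kegel graph, reduce by the Gruenberg--Kegel theorem to a nonabelian simple section, and then identify that section uniquely from the classification of finite simple groups with disconnected Gruenberg--Kegel graph together with the number of vertices.

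First I would record the two graphs as abstract graphs. Using the involution centralizer $\langle t\rangle\times\PSL_2(27)$ and the torus structure of ${}^2G_2(27)$ (equivalently, the adjacency criteria of Vasil'ev and Vdovin) one checks that $\pi({}^2G_2(27))=\{2,3,7,13,19,37\}$ and that $\Gamma({}^2G_2(27))$ has three connected components: the set $\{2,3,7,13\}$, which induces a star $K_{1,3}$ with centre $2$ (the edges being $2$--$3$, $2$--$7$, $2$--$13$ coming from elements of orders $6,14,26$), and the two singletons $\{19\}$ and $\{37\}$. Likewise $\pi(E_8(2))$ consists of $16$ primes and $\Gamma(E_8(2))$ has four connected components, one on $13$ vertices and three singletons $\{151\}$, $\{241\}$, $\{331\}$, the primitive prime divisors of $2^{15}-1$, $2^{24}-1$ and $2^{30}-1$; that the count is $4$ rather than $5$ is consistent with $2\equiv 2\pmod 5$ and the description of the components of $E_8(q)$. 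Thus the two target abstract graphs are $K_{1,3}\sqcup K_1\sqcup K_1$ (six vertices) and a $13$-vertex connected graph together with three isolated vertices (sixteen vertices).

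Next, suppose $H$ is any group with $\Gamma(H)$ isomorphic, as an abstract graph, to one of these. Then $\Gamma(H)$ is disconnected with at least three components, so $H$ is neither Frobenius nor $2$-Frobenius; by the Gruenberg--Kegel theorem (Lemma~\ref{Gruenberg--Kegel theorem}) $H$ has a normal series $1\trianglelefteq N\trianglelefteq M\trianglelefteq H$ with $N$ and $H/M$ nilpotent and $S=M/N$ nonabelian simple, and every connected component of $\Gamma(H)$ except the one meeting $\pi(N)\cup\pi(H/M)$ (which contains the prime $2$) is a connected component of $\Gamma(S)$. In particular the singleton components are single-prime components of $\Gamma(S)$, so $S$ is a simple group with at least three components, at least two (resp. three) of them single primes, and $|\pi(S)|\le|\pi(H)|$, i.e.\ at most $6$ (resp.\ $16$). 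I would then run through the classification of finite simple groups with disconnected Gruenberg--Kegel graph (the data of Williams, Kondrat'ev and Vasil'ev--Vdovin, with Table~\ref{table} for four or more components), matching the multiset of component sizes and, crucially, the vertex count. For $E_8(2)$ the count $16$ eliminates every admissible family except $E_8(q)$ with $q\equiv\pm2\pmod 5$, after which $|\pi(E_8(q))|>16$ for $q>2$ forces $q=2$ and $S\cong E_8(2)$; for ${}^2G_2(27)$ a parallel but smaller analysis (complicated by the fact that the prime $2$ may a priori occupy the centre, a leaf, or an isolated vertex of the abstract graph) singles out $S\cong{}^2G_2(27)$.

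Finally, with $S$ identified I would show $H\cong S$: since $|\pi(S)|=|\pi(H)|$, the nilpotent groups $N$ and $H/M$ add no new prime, and because the non-singleton component of each target is already ``full'' (of size $13$, respectively the $4$-vertex star), any nontrivial $N$, $H/M$ or outer part would either enlarge that component or create an edge at one of the isolated primes, destroying the abstract isomorphism type; hence $H$ is almost simple with socle $S$, and an inspection of $\Out(S)$ (invoking Zavarnitsine's theorem that ${}^2G_2(q)$ is recognizable by its labelled Gruenberg--Kegel graph, and a direct check for $E_8(2)$) yields $H\cong S$. I expect the middle step to be the main obstacle: one must exhaustively exclude the other simple groups and, above all, rule out that a smaller simple group with several single-prime components --- for instance a Suzuki group $\Sz(q)$, whose graph is edgeless, or $\PSL_2(27)$ --- could be \emph{padded} by a solvable normal subgroup into a group whose abstract graph matches. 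Excluding these padded configurations amounts to showing that the simultaneous fixed-point-free action of all the relevant torus primes of $S$ on $N$ needed to keep those primes isolated cannot coexist with building a connected component of exactly the required size and shape; this, rather than the graph computations, is where the genuine content lies.
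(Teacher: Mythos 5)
Your skeleton for $E_8(2)$ (Gruenberg--Kegel theorem $\Rightarrow$ a simple section $S$ with at least $4$ components $\Rightarrow$ consult the table of multi-component simple groups $\Rightarrow$ kill the radical) is the same as the paper's, but the two steps that carry all the weight are missing, and the one concrete elimination you do offer is false as stated. You claim that ``the count $16$ eliminates every admissible family except $E_8(q)$''; it does not, because of exactly the padding phenomenon you flag at the end. In case (3) of the Gruenberg--Kegel theorem the nilpotent normal subgroup $N$ may contribute arbitrarily many \emph{new} primes, all landing in $\pi_1(H)$: for instance $S=J_4$ (ten primes, five singleton components) padded by a nilpotent group contributing six further primes matches both the vertex count $16$ and the component pattern $13+1+1+1$, and the Monster ($15$ primes, $\Aut(M)=M$) needs only one prime of padding, so no counting of vertices or components can exclude it. The paper's actual eliminations use different, sharper invariants: $\pi(F(H))$ is a \emph{clique} in $\Gamma(H)$, so heavy padding forces at least $6$ (resp.\ $11$) vertices of degree $\ge 5$ (resp.\ $\ge 10$), contradicting the degree sequence of $\Gamma(E_8(2))$ in Table~\ref{table2}; for ${}^2B_2(q)$ one first needs Gerono's equation (Lemma~\ref{Gerono}) to bound $q$; and for $M$, where padding is only one prime and degrees alone do not suffice, the paper uses Frobenius subgroups $41{:}40$, $59{:}29$, $71{:}35$, $23{:}11$ of $M$ together with Lemma~\ref{OrdersInExt} to force the padded prime to have degree $\ge 5$. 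None of this is in your proposal.

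The second gap is the final step. Your assertion that any nontrivial $N$ ``would either enlarge that component or create an edge at one of the isolated primes'' is precisely the statement requiring proof, not a consequence of the component being ``full'': a nontrivial $O_r(H)$ with $r\in\pi_1$ adds no prime and no vertex, and an edge at an isolated prime appears only if one can show the isolated primes cannot act fixed-point-freely on $N$. The paper proves this for $E_8(2)$ via two nontrivial inputs: unisingularity of $E_8(q)$ (Guralnick--Tiep, Lemma~\ref{Unisingular}) when $r=2$, and, for odd $r$, Zavarnitsine's fixed-point lemma for ${}^3D_4(q)$ (Lemma~\ref{3D4}) applied to a ${}^3D_4(4)$ subgroup of $E_8(2)$, forcing an edge at $241$ in either case --- hardly ``a direct check''. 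Finally, for ${}^2G_2(27)$ the paper takes a much shorter route you missed entirely: any graph abstractly isomorphic to $\Gamma({}^2G_2(27))$ is bipartite with parts of sizes $1$ and $5$, so Lemma~\ref{K(1,5)} (Maslova--Pagon) immediately gives $\pi(H)=\{2,3,7,13,19,37\}$ and $H/O_2(H)\cong{}^2G_2(27)$, whence $\Gamma(H)=\Gamma(G)$ as labelled graphs, and Zavarnitsine's recognizability of ${}^2G_2(3^{2m+1})$ (Lemma~\ref{Rec2G2}) finishes; your proposed re-run of the multi-component classification for this case would face the same unpatched padding problem.
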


It is easy to see that if a group $G$ is recognizable by isomorphism type of its Gruenberg--Kegel graph, then $G$ is recognizable by its Gruenberg--Kegel graph, therefore by Theorem~\ref{CoGrAlmSimple}, $G$ is almost simple. Thus, the following problem naturally arises.

\begin{problem} Let $G$ be an almost simple group. Decide whether $G$ is recognizable by isomorphism type of its Gruenberg--Kegel graph.
\end{problem}

Finally in this paper, we show how the Gruenberg--Kegel graph of a group gives information about various other graphs whose vertex sets are the elements of the group (so typically very much larger). The graphs we consider are the following (we give the adjacency rule for distinct elements $g,h\in G$ in each case):
\begin{itemize}
\item the \emph{commuting graph}~\cite{BF}: $gh=hg${\rm;}
\item the \emph{power graph}~\cite{KQ}: one of $g$ and $h$ is a power of the other{\rm;}
\item the \emph{enhanced power graph}~\cite{AACNS}: $\langle g,h\rangle$ is cyclic{\rm;}
\item the \emph{deep commuting graph}~\cite{CK}: the inverse images of $g$ and $h$ commute in every central extension of $G$.
\end{itemize}

In section~\ref{s:graphs}, we prove the following results.

\begin{theorem}\label{t:graphiso}
For a finite group $G$, let $\mathrm{T}(G)$ denote one of the above four types of graph on $G$. If $G$ and $H$ are groups with $\mathrm{T}(G)=\mathrm{T}(H)$, then the Gruenberg--Kegel graphs of $G$ and $H$ are equal.
\end{theorem}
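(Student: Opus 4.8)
The key observation is that all four graphs $\mathrm{T}(G)$ determine, purely from the abstract graph structure, enough information to recover the Gruenberg--Kegel graph $\Gamma(G)$. Since $\Gamma(G)$ is built from the set $\omega(G)$ of element orders via the rule ``$p \sim q$ iff $pq \in \omega(G)$,'' it suffices to show that $\mathrm{T}(G) = \mathrm{T}(H)$ forces $\Gamma(G) = \Gamma(H)$; note this is a stronger conclusion than graph isomorphism, so I must be careful that the reconstruction is canonical enough to respect the vertex labelling by actual primes.

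The plan is to treat the four graphs through a common intermediate invariant. First I would recall the known coincidences between these graphs: the power graph and the enhanced power graph have the same connected components, and in fact all four graphs share the same clique structure on cyclic subgroups. Concretely, for the \emph{commuting graph} and \emph{enhanced power graph}, a set of elements forms a clique precisely when they generate an abelian (resp.\ cyclic) subgroup, so the maximal cliques encode the maximal abelian (resp.\ cyclic) subgroups of $G$; for the \emph{power graph} the maximal cliques likewise correspond to cyclic subgroups via the divisibility order on element orders. The \emph{deep commuting graph} sits between the commuting and enhanced power graphs and is handled by the results of~\cite{CK}. In each case, the crucial point is that I can read off, from $\mathrm{T}(G)$ alone, the multiset of orders of cyclic subgroups, equivalently the set $\omega(G)$ of element orders: an element $g$ has order $n$ iff it lies in a clique realizing a cyclic group of order $n$ and generates it, and generation can be detected combinatorially (e.g.\ via closed neighborhoods, since in the power graph $g$ is a power of $h$ is visible from containment of neighborhoods).

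The central step is therefore: \emph{$\mathrm{T}(G) = \mathrm{T}(H)$ implies $\omega(G) = \omega(H)$.} Once this is established, $\Gamma(G) = \Gamma(H)$ follows immediately, because $p$ and $q$ are adjacent in $\Gamma(G)$ iff $pq \in \omega(G)$, and the vertex set $\pi(G)$ consists of the primes dividing some element order, so equal spectra give literally equal Gruenberg--Kegel graphs (same vertex labels, same edges). For the power and enhanced power graphs I would recover element orders directly from the combinatorial structure; for the commuting and deep commuting graphs I would instead recover the orders of \emph{cyclic} subgroups by identifying, inside each maximal abelian subgroup (a maximal clique), those sub-cliques that are cyclic—detectable because a finite abelian group is cyclic iff it has a unique subgroup of each order dividing its exponent, a condition expressible through the induced subgraph. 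In every case the upshot is the same reconstruction of $\omega(G)$.

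The main obstacle I expect is the commuting graph and the deep commuting graph, where cliques correspond to abelian but not necessarily cyclic subgroups, so the order of an element is not immediately the size of a clique. The difficulty is to extract from an abelian group, presented only as an edge-complete set of vertices with no multiplication, which clique-sizes arise as orders of cyclic subgroups; here I would lean on the structure theorem for finite abelian groups together with the fact that the element orders occurring are exactly the divisors of the exponent that are realized, and argue that the maximal element order within each maximal clique, plus the pattern of overlaps between maximal cliques, suffices to pin down $\omega(G)$. The deep commuting graph requires invoking the specific comparison results of~\cite{CK} showing it is sandwiched between the commuting and enhanced power graphs and shares their spectrum-determining features, which I would cite rather than reprove.
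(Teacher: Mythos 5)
Your proposal contains a genuine gap, and it is located exactly where you anticipated trouble: the commuting graph. Your central step asserts that $\mathrm{T}(G)=\mathrm{T}(H)$ implies $\omega(G)=\omega(H)$, and this is simply false when $\mathrm{T}$ is the commuting graph. Take $G=C_4$ and $H=C_2\times C_2$ on the same underlying four-element set: both commuting graphs are the complete graph $K_4$, hence equal, yet $\omega(G)=\{1,2,4\}\neq\{1,2\}=\omega(H)$. More generally, any two non-isomorphic abelian groups of the same order have equal (complete) commuting graphs and different spectra. This also destroys the proposed repair mechanism: inside a maximal clique of the commuting graph the induced subgraph is complete, so it carries no trace of the subgroup lattice, and the criterion ``an abelian group is cyclic iff it has a unique subgroup of each order dividing its exponent'' is not expressible through the induced subgraph --- there is nothing to express it with. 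Nor can ``the pattern of overlaps between maximal cliques'' rescue the abelian case, where there is a single maximal clique and no overlaps at all. (Your treatment of the power and enhanced power graphs is sound --- for those graphs the spectrum genuinely is recoverable, via Cameron-type reconstruction or via sizes of maximal cliques --- but the theorem requires all four graphs.)

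The paper's proof sidesteps this by recovering strictly less than the spectrum: for the commuting, enhanced power, and deep commuting graphs it shows that $p$ and $q$ are adjacent in $\Gamma(G)$ if and only if some \emph{maximal clique} of $\mathrm{T}(G)$ has size divisible by $pq$. The point is that an abelian (or cyclic) subgroup whose order is divisible by $pq$ automatically contains an element of order $pq$ --- commuting elements of orders $p$ and $q$ multiply to one --- even though the full list of element orders in that subgroup cannot be read off the clique. So the GK graph is determined by the multiset of maximal clique sizes, which is manifestly an invariant of the graph, and equality of graphs gives equality (not merely isomorphism) of GK graphs since $|G|=|H|$ fixes the common vertex set $\pi(G)=\pi(H)$. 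The deep commuting graph is handled by lifting to a Schur cover as in \cite{CK}, and the power graph --- where the clique argument fails --- is reduced to the enhanced power graph by the result of \cite{ZBM}. If you want to salvage your write-up, replace the false intermediate invariant $\omega(G)$ with the weaker invariant ``set of pairs $\{p,q\}$ dividing some maximal clique size,'' and the rest of your outline goes through.
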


\begin{theorem}
Let $G$ be a finite group. Then the following are equivalent{\rm:}
\begin{itemize}
\item[$(a)$] the enhanced power graph of $G$ is equal to the power graph{\rm;}
\item[$(b)$] the Gruenberg--Kegel graph of $G$ has no edges{\rm;}
\item[$(c)$] one of the following statements holds{\rm:}

$(1)$ $|\pi(G)|=1$ and $G$ is a $p$-group{\rm;}

$(2)$ $|\pi(G)|=2$ and $G$ is a {\rm(}solvable{\rm)} Frobenius group or $2$-Frobenius group{\rm;}

$(3)$ $|\pi(G)|=3$ and $G \in $ $\{A_6,$ $\PSL_2(7),$ $\PSL_2(17)$, $M_{10}\}${\rm;}

$(4)$ $|\pi(G)|=3$, $G/O_2(G)$ is $\PSL_2(2^n)$ for $n \in \{2,3\}$, and if $O_2(G)\not=\{1\}$, then $O_2(G)$ is the direct product of minimal normal subgroups of $G$, each of which is of order $2^{2n}$ and as a $G/O_2(G)$-module is isomorphic to the natural $\GF(2^n)SL_2(2^n)$-module.

$(5)$ $|\pi(G)|=4$ and $G \cong\PSL_3(4)$.

$(6)$ $|\pi(G)|=4$, $G/O_2(G)$ is $\Sz(2^n)$ for $n \in \{3,5\}$, and if $O_2(G)\not=\{1\}$, then $O_2(G)$ is the direct product of minimal normal subgroups of $G$, each of which is of order $2^{4n}$ and as a $G/O_2(G)$-module is isomorphic to the natural $\GF(2^n)\Sz(2^n)$-module of dimension $4$.
\end{itemize}
\label{t:nullgk}
\end{theorem}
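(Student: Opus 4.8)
The theorem characterises groups whose Gruenberg–Kegel graph has no edges — that is, EPPO groups (every element has prime power order). My proof strategy is to establish the equivalence $(a)\Leftrightarrow(b)$ cheaply and directly, then to spend the real effort on $(b)\Leftrightarrow(c)$, which is essentially the complete classification of EPPO groups announced in the abstract.

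For $(a)\Leftrightarrow(b)$, recall that the enhanced power graph joins $g,h$ exactly when $\langle g,h\rangle$ is cyclic, while the power graph joins them when one is a power of the other. The power graph is always a spanning subgraph of the enhanced power graph, so equality fails precisely when there exist commuting elements generating a cyclic group, neither a power of the other. The cleanest route is: the two graphs coincide if and only if every element of $G$ has prime power order. Indeed, if some $g\in G$ has order divisible by two distinct primes $p,q$, then $\langle g\rangle$ is cyclic of non–prime-power order and contains elements $g^{|g|/p}$ and $g^{|g|/q}$ of orders $p$ and $q$ that are adjacent in the enhanced power graph but not in the power graph. Conversely, if every element has prime power order then $\langle g,h\rangle$ cyclic forces it to be a cyclic $p$-group, in which the two elements are comparable under the power relation. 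Since ``every element has prime power order'' is exactly the statement that $\Gamma(G)$ has no edges (an edge $pq$ would require an element of order $pq$), this gives $(a)\Leftrightarrow(b)$ in a few lines.

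The substance is $(b)\Leftrightarrow(c)$. The direction $(c)\Rightarrow(b)$ is a case check: for each listed family one verifies directly that every element order is a prime power, which for the Frobenius and $2$-Frobenius cases follows from their structure and for the sporadic-looking small cases ($A_6$, $\PSL_2(7)$, $\PSL_2(17)$, $M_{10}$, $\PSL_3(4)$) follows from the known spectra in the Atlas, while the infinite families $\PSL_2(2^n)$ and $\Sz(2^n)$ require checking that the natural-module extensions introduce no bad element orders. For $(b)\Rightarrow(c)$ I would split on the solvable radical. If $G$ is solvable, the disconnectedness of $\Gamma(G)$ (it has no edges, hence is totally disconnected, so has $|\pi(G)|$ components) combined with the Gruenberg–Kegel theorem (Lemma~\ref{Gruenberg--Kegel theorem}) forces $|\pi(G)|\le 2$ and yields cases $(1)$ and $(2)$; here a solvable group with disconnected prime graph is Frobenius or $2$-Frobenius by the structural part of that theorem. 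If $G$ is nonsolvable, the Gruenberg–Kegel theorem gives a normal series $1\trianglelefteq K\trianglelefteq L\trianglelefteq G$ with $L/K$ a nonabelian simple group whose prime graph is disconnected, so I would invoke the classification of connected components of prime graphs of simple groups (referenced in the excerpt via \cite{ak,Mazurov2004} and Table~\ref{table}) together with the further requirement that $L/K$ itself be EPPO. This pins $L/K$ down to the short list of simple EPPO groups — $A_5$, $A_6$, $\PSL_2(7)$, $\PSL_2(8)$, $\PSL_2(17)$, $\PSL_3(4)$, $\Sz(8)$, $\Sz(32)$ — after which one analyses the possible extensions above and below the simple section.

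The main obstacle, and where the bulk of the argument lives, is the nonsolvable case and specifically controlling the kernel $K=O_2(G)$ and the quotient $G/K$. Once the simple section is identified, I must show $K$ is a $2$-group (any odd prime in $K$ that is isolated would still need to combine with involutions acting on it, potentially creating mixed-order elements), that $G/K$ is almost simple with the required socle and admits no outer automorphisms of bad order (this is why $A_5$ is absent but $\PSL_2(8)$ survives, and why $M_{10}$ appears as an extension of $A_6$ but the full $\Aut(A_6)$ does not), and finally that the action of $G/K$ on $K$ forces $K$ to be a direct sum of natural modules — otherwise a non-trivial fixed point or a non-natural constituent produces an element whose order is a product of $2$ and an odd prime. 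This last module-theoretic step, showing that only the natural $\GF(2^n)\SL_2(2^n)$- and $\GF(2^n)\Sz(2^n)$-modules avoid creating forbidden element orders, is the delicate heart of the classification and is what distinguishes cases $(4)$ and $(6)$ from the purely simple cases $(3)$ and $(5)$.
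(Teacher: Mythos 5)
Your skeleton does match the paper's: the $(a)\Leftrightarrow(b)$ argument is the same in every detail (an element of order $pq$ gives $g^p,g^q$ adjacent in the enhanced power graph but not the power graph; conversely prime-power orders force any cyclic $\langle g,h\rangle$ to be a cyclic $p$-group), and for $(b)\Rightarrow(c)$ the paper also splits into the solvable case (getting $|\pi(G)|\le 2$, though via Higman's theorem, Lemma~\ref{HigmanThm}, applied to Hall subgroups as in Lemma~\ref{non-solvable}, rather than via your component count, which additionally needs the standard but unstated fact that Frobenius and $2$-Frobenius groups have exactly two components) and the nonsolvable case, where Lemma~\ref{Gruenberg--Kegel theorem} plus Lemma~\ref{FrGroups} produce an almost simple section, and the same eight simple groups appear. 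One genuine routing difference: you propose to extract that list from the CFSG-based classification of prime-graph components of simple groups, whereas the paper quotes Suzuki's 1962 classification of simple EPPO groups (Lemma~\ref{EPPO}, \cite[Theorem~16]{Suzuki}) and explicitly advertises that this keeps the whole of $(b)\Leftrightarrow(c)$ independent of the Classification of Finite Simple Groups --- a feature your route gives up, and your route also needs adjacency information inside components, not merely the component partition of Table~\ref{table}.

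The genuine gap is the step you yourself call ``the delicate heart'': you state the three things that must be proved but give no argument and invoke no tool capable of proving them. Concretely: (i) for socle $A_6$, $\PSL_2(7)$, $\PSL_2(17)$, $\PSL_3(4)$ one must show $F(G)=1$; the paper does this via Stewart's theorem (Lemma~\ref{PSL2qodd}: a fixed-point-free element of order $3$ over $\PSL_2(q)$, $q$ odd, kills the kernel --- fixed-point-freeness being automatic in an EPPO group when $F(G)=O_p(G)$ with $p\ne 3$) plus a separate Frobenius argument to exclude $O_3(G)\ne 1$. (ii) For $\PSL_2(2^n)$ and $\Sz(2^n)$ one must show a nontrivial kernel is $O_2(G)$ and a direct product of natural modules; this is exactly the Higman--Stewart theorem (Lemma~\ref{PSL2qeven}) and Martineau's theorem on $2$-modular representations of Suzuki groups (Lemma~\ref{Sz(q)}, \cite{Martineau}). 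Your assertion that ``a non-natural constituent produces an element whose order is a product of $2$ and an odd prime'' is precisely the content of those deep theorems, not a verification one can carry out by inspecting fixed points. (iii) For the converse inclusion $(c)\Rightarrow(b)$ in cases $(4)$ and $(6)$, one must show the natural-module extensions really are EPPO; the paper proves this by computing fixed-point dimensions from $2$-modular Brauer character tables (Lemma~\ref{BrChar} together with \cite{AtlasBrCh}). Without citing these results or supplying substitutes, your plan names the destinations but provides no road to them.
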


\section{Preliminaries}\label{Prelim}

Let $q>1$ be a natural number and $r$ be an odd prime such that $(q, r) = 1$.
Denote by $e(r,q)$ the multiplicative order of $q$ modulo $r$, that is,
the minimal natural number $m$ such that $q^m \equiv 1 \pmod{r}$. For odd $q$ define $e(2, q) = 1$ if $q \equiv 1 \pmod{4}$ and $e(2, q) = 2$ if $q \equiv 3 \pmod{4}$.

\begin{lemma}[{\rm Zsigmondy's Theorem, see \cite{Zsigmondy}}]\label{Zhigmondy} Let $q>1$ be a natural number. For each $m$ there exists a prime $r$ such that $e(r, q) = m$, except the following cases{\rm:} $q = 2$  and $m = 1${\rm;} $q = 3$ and $m = 1${\rm;}  $q = 2$ and $m = 6$. In particular, $r$ divides $q^n - 1$ and doesn't divide $q^i - 1$ for $1 \le i \le n-1$, except for the following three cases{\rm:} $q = 2$ and $n = 6${\rm;} $q = 2^k - 1$ for some prime $k$ and $n = 2${\rm;} $q=2$ and $n=1$. \end{lemma}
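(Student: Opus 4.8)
The plan is to prove the statement in its primitive-prime-divisor form (the ``In particular'' clause) by cyclotomic polynomials, and then to read off the first form using the special convention for $r=2$. Write $\Phi_m$ for the $m$-th cyclotomic polynomial, so that $q^n-1=\prod_{d\mid n}\Phi_d(q)$. A prime $r$ with $e(r,q)=n$ (a \emph{primitive prime divisor} of $q^n-1$) is precisely a prime dividing $q^n-1$ but none of $q^i-1$ for $i<n$; since for such an $r$ the multiplicative order of $q$ modulo $r$ must equal $n$, these are exactly the primes $r\mid\Phi_n(q)$ of order $n$. Thus the whole theorem reduces to deciding when $\Phi_n(q)$ has a prime divisor of order $n$.

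First I would establish the standard dichotomy for prime divisors of $\Phi_n(q)$: if $r\mid\Phi_n(q)$ and $d$ is the order of $q$ modulo $r$, then either $d=n$ (so $r$ is primitive), or $n=d\,r^k$ with $k\ge 1$. In the second case $d\mid r-1$ forces every prime factor of $d$ to be smaller than $r$, so $r$ is the \emph{largest} prime factor of $n$; moreover the lifting-the-exponent formula for $v_r(q^n-1)$ together with $q^n-1=\prod_{d\mid n}\Phi_d(q)$ shows $v_r(\Phi_n(q))=1$ for odd $r$ (the prime $2$ needing the usual separate valuation formula $v_2(q^{2m}-1)=v_2(q-1)+v_2(q+1)+v_2(m)-1$). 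Consequently, if $q^n-1$ has \emph{no} primitive prime divisor, then every prime dividing $\Phi_n(q)$ equals this single largest prime factor $P$ of $n$, occurring to the first power, whence $\Phi_n(q)\in\{1,P\}$ and in particular $\Phi_n(q)\le n$.

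The next step is a growth estimate contradicting $\Phi_n(q)\le n$ outside a short list. Writing $\Phi_n(q)=\prod_{\gcd(k,n)=1}(q-\zeta^k)$ with $\zeta=e^{2\pi i/n}$ and using $|q-\zeta^k|\ge q-1$ gives $\Phi_n(q)\ge (q-1)^{\varphi(n)}$; pairing complex-conjugate roots sharpens this to a strict inequality for $n\ge 2$, and for the awkward base $q=2$ one estimates the paired factors $q^2-2q\cos\theta+1$ directly. The upshot is that $\Phi_n(q)>n\ge P$ for all pairs $(q,n)$ except those with $q$ and $n$ both small, so a primitive prime divisor must exist there. I would then dispose of the finitely many surviving small pairs by direct computation, which leaves exactly $q=2,\ n=1$ (where $q-1=1$), $q=2,\ n=6$ (where $2^6-1=3^2\cdot 7$ has no order-$6$ prime), and $n=2$ with $q+1$ a power of $2$, i.e.\ $q=2^k-1$; this is the ``In particular'' list.

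Finally I would translate into the $e(r,q)=m$ form. For $m\ge 3$ the two forms agree. For $m=2$ the convention $e(2,q)=2$ when $q\equiv 3\pmod 4$ means the prime $2$ now counts as having order $2$, which removes the $n=2$ exceptions (note $q=2$ is in any case excluded by coprimality); and checking $m=1$ against the rule $e(2,q)=1$ iff $q\equiv 1\pmod 4$ produces the extra exception $q=3,\ m=1$ (there $q-1=2$ but $e(2,3)=2$) alongside $q=2,\ m=1$. The main obstacle is the growth estimate: one must make the lower bound for $\Phi_n(q)$ sharp enough that the boundary cases collapse to a genuinely finite, hand-checkable set yielding precisely the listed exceptions, with the prime $2$ (both as base and as a possible order-$2$ divisor) requiring separate valuation bookkeeping throughout.
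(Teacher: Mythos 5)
The paper contains no proof of this lemma for you to be compared against: it is quoted as a classical result with a citation to Zsigmondy's 1892 paper, and is used throughout as a black box. Judged on its own merits, your outline is the standard cyclotomic-polynomial argument and is sound: identifying primitive prime divisors of $q^n-1$ with the prime divisors of $\Phi_n(q)$ whose order is $n$; the dichotomy that a non-primitive prime divisor of $\Phi_n(q)$ must be the largest prime factor $P$ of $n$, with $n$ equal to the order times a power of $P$; the valuation bound $v_P(\Phi_n(q))=1$ for odd $P$; the lower bound $\Phi_n(q)\ge (q-1)^{\varphi(n)}$ with extra care at $q=2$; and a finite case check. Your bookkeeping converting the primitive-divisor form into the $e(r,q)$ form --- where the convention $e(2,q)=2$ for $q\equiv 3\pmod 4$ absorbs the $n=2$ exceptions and creates the extra exception $q=3$, $m=1$ --- is also correct and reproduces exactly the paper's list.

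Two caveats. First, a point of logical compression to repair in a full write-up: the inference ``no primitive prime divisor $\Rightarrow\Phi_n(q)\in\{1,P\}$, so $\Phi_n(q)\le n$'' is literally false at $n=2$ (e.g.\ $\Phi_2(7)=8$), because the first-power claim fails precisely for $P=2$, $n=2$; you flag that the prime $2$ needs separate valuation bookkeeping and you do list the Mersenne exception at the end, but the clean statement of this step must exclude $n=2$, which is then settled directly by $\gcd(q-1,q+1)\le 2$: the $n=2$ exceptions are exactly the $q$ with $q+1$ a power of $2$. Second, on that exception list: your version (all $q$ with $q+1$ a power of $2$, i.e.\ $q=2^k-1$ for arbitrary $k$) is the correct one; the paper's phrase ``$q=2^k-1$ for some prime $k$'' is an inaccuracy in its statement of the lemma (for instance $q=15=2^4-1$, $n=2$ is also exceptional), so do not be misled into thinking your list is too large.
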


In the notation of Lemma~\ref{Zhigmondy}, any prime $r$ which divides $q^n - 1$ and doesn't divide $q^i - 1$ for $1 \le i \le n-1$ is called a primitive prime divisor of the number $q^n-1$.  Note that a primitive prime divisor of a number $q^n-1$ can be defined non-uniquely.  For example, $11^3-1=2\times 5 \times 7\times 19$, $11^2-1=2^3\times 3 \times 5$, and $11-1=2\times 5$. Thus, primitive prime divisors of the number $11^3-1$ are the primes $7$ and $19$.

\begin{lemma}{\rm (see \cite{Gerono})}\label{Gerono} Let $p$ and $q$ be primes such that $p^a - q^b = 1$ for some integer numbers $a \ge 0$ and $b \ge 0$. Then $(p^a, q^b) \in \{(3^2, 2^3),(2^a, q),(p, 2^b)\}$, where $a$ is a prime and $b$ is a power of $2$.

\end{lemma}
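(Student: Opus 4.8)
The plan is to prove this by elementary factorization and parity arguments, treating it as a restricted Catalan-type equation in which the hypothesis that both bases are prime lets us avoid the full strength of Mihailescu's theorem. First I would record the basic reductions. Since $p^a$ and $q^b = p^a - 1$ are consecutive integers they are coprime, so $p \neq q$; and exactly one of them is even, hence exactly one of $p, q$ equals $2$. The degenerate cases $a = 0$ or $b = 0$ are immediately disposed of (they force $q^b = 0$ or $p^a = 2$), so I may assume $a, b \geq 1$. This splits the argument into two dual main cases: $p = 2$, giving $q^b = 2^a - 1$; and $q = 2$, giving $p^a = 2^b + 1$.

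In the case $p = 2$ I would first eliminate $b \geq 2$. If $b$ is even then $q^b$ is an odd square, so $q^b \equiv 1 \pmod 4$, whereas $2^a - 1 \equiv 3 \pmod 4$ for $a \geq 2$ (and $a = 1$ forces $q^b = 1$); contradiction. If $b$ is odd and $\geq 3$, I factor $2^a = q^b + 1 = (q+1)(q^{b-1} - q^{b-2} + \cdots - q + 1)$; the second factor is a sum of an odd number of odd terms, hence odd and greater than $1$, and so cannot divide a power of $2$. Thus $b = 1$, giving the Mersenne prime $q = 2^a - 1$; the standard fact that $2^d - 1 \mid 2^a - 1$ whenever $d \mid a$ forces $a$ to be prime, yielding the family $(2^a, q)$.

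In the case $q = 2$ the analysis is dual. I would rule out $a$ odd with $a \geq 3$ by the same cofactor-parity trick applied to $2^b = p^a - 1 = (p-1)(p^{a-1} + \cdots + 1)$, the cofactor again being an odd integer exceeding $1$. When $a$ is even, writing $c = p^{a/2}$ gives $c^2 - 1 = (c-1)(c+1) = 2^b$ with $c$ odd; since $\gcd(c-1,c+1) = 2$, both factors are powers of $2$ differing by $2$, forcing $c - 1 = 2$, $c = 3$, $b = 3$, and hence $p = 3$, $a = 2$ — the exceptional solution $(3^2, 2^3)$. The remaining possibility $a = 1$ gives the Fermat prime $p = 2^b + 1$; since $x + 1 \mid x^d + 1$ for odd $d$ shows that any odd divisor $d > 1$ of $b$ would produce a proper factor $2^{b/d} + 1$, the exponent $b$ must be a power of $2$, yielding the family $(p, 2^b)$ and completing the three cases.

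The main obstacle is the subcase in which the odd prime carries an exponent at least $2$, i.e. genuinely establishing that $9 - 8 = 1$ is the only solution with $\min(a,b) \geq 2$. Rather than invoke Catalan's conjecture, the plan leans on the single observation that the relevant cofactor (the alternating sum $\sum (\pm 1) q^i$ or the plain sum $\sum p^i$) is a sum of an odd number of odd summands and therefore odd, which clashes with the opposite side being a pure power of $2$; this parity idea, together with the coprimality of $c \mp 1$, resolves both nontrivial subcases, and is precisely where the primality of the bases is used. The final bookkeeping—verifying the exponent conditions ``$a$ prime'' and ``$b$ a power of $2$''—is routine, following from the two standard divisibility facts for $2^n \mp 1$ noted above. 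Since the result is classical (due to Gerono), one could alternatively simply cite it, but the sketch above is self-contained.
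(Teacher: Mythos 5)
Your proof is correct and complete, but there is nothing in the paper to compare it against: the lemma is stated with the bare citation to Gerono's 1870 note and the paper supplies no proof of its own, so your self-contained argument is a genuine addition rather than a variant of an internal one. Your case analysis is the standard elementary route and covers everything: after the degenerate reductions and the observation that exactly one base is $2$, you eliminate even $b$ in $q^b=2^a-1$ by the obstruction $q^b\equiv 1\pmod 4$ versus $2^a-1\equiv 3\pmod 4$, eliminate odd exponents $\ge 3$ on the odd prime via the odd cofactor in $q^b+1=(q+1)(q^{b-1}-\cdots+1)$ resp.\ $p^a-1=(p-1)(p^{a-1}+\cdots+1)$ (an alternating or plain sum of an odd number of odd terms, hence odd and $>1$, which cannot divide a power of $2$), and isolate the exceptional solution $(3^2,2^3)$ from $(c-1)(c+1)=2^b$ with $c=p^{a/2}$ odd and $\gcd(c-1,c+1)=2$, forcing the two factors to be the powers of $2$ differing by $2$, namely $2$ and $4$. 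The exponent conditions then follow correctly from $2^d-1\mid 2^a-1$ for $d\mid a$ and $2^m+1\mid 2^{dm}+1$ for odd $d$. One small quibble: you assert that the primality of the bases is ``precisely where'' the parity/cofactor step uses the hypothesis, but that step needs only that one side is a pure power of $2$ and the other base is odd; primality actually enters only at the very end, in concluding that $a$ is prime (Mersenne condition on $q=2^a-1$) and that $b$ is a power of $2$ (Fermat condition on $p=2^b+1$). This is a mischaracterization of where the hypothesis bites, not a gap in the proof.
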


\medskip

Our graph-theoretic and group-theoretic terminology is mostly standard; but
we list a few points here.

Let $\pi$ be a set of primes.
Given a natural number $n$, denote by $\pi(n)$ the set of its prime divisors. Then $\pi (|G|)$ is exactly $\pi(G)$ for any group $G$.
A natural number $n$ with $\pi(n) \subseteq \pi$ is called a $\pi$-number, and a group $G$ with $\pi(G) \subseteq \pi$ is called a $\pi$-group.

A {\it $n$-clique} (resp.\ a {\it $n$-coclique}) is a graph with $n$ vertices
in which all the vertices are pairwise adjacent (resp. non-adjacent).

If $G$ and $H$ are groups and $p$ is a prime,
then we will denote by $S(G)$ the {\it solvable radical} of $G$ (the largest solvable normal subgroup of $G$), by $F(G)$ the {\it Fitting subgroup} of $G$ (the largest nilpotent normal subgroup of $G$), by $\Phi(G)$ the {\it Frattini subgroup} of $G$ (the intersection of all maximal subgroups of $G$), and by $\Soc(G)$ the {\it socle} of $G$ (the subgroup of $G$ generated by the set of all non-trivial minimal normal subgroups of $G$). By $G.H$ we denote any extension of $G$ by $H$, by $G:H$ (or $G \rtimes H$) we denote a split extension (or semidirect product) of $G$ by $H$, by $O_p(G)$ the largest normal $p$-subgroup of $G$, by $O_{p'}$ the largest normal subgroup of $G$ whose order is not divisible by $p$, and by $O(G)$ the largest normal subgroup of odd order of $G$.
Denote the number of connected components of $\Gamma(G)$
by $s(G)$, and the set of connected
components of $\Gamma(G)$ by $\{\pi_i(G) \mid 1 \leq i \leq s(G) \}$; for a group $G$
of even order, we assume that $2 \in \pi_1(G)$. Denote by $t(G)$ the \emph{independence number} of $\Gamma(G)$ (the greatest cardinality of a coclique in $\Gamma(G)$), and by $t(r,G)$ the greatest cardinality of a coclique in $\Gamma(G)$ containing a prime $r$.

\medskip

\begin{lemma}[{\rm Gruenberg--Kegel Theorem, \cite[Theorem~A]{Williams}}]\label{Gruenberg--Kegel theorem} If~$G$ is a group with disconnected Gruenberg--Kegel graph, then one of the following statements holds{\rm:}
\begin{itemize}
\item[$(1)$] $G$ is a Frobenius group{\rm;}
\item[$(2)$] $G$ is a $2$-Frobenius group{\rm;}
\item[$(3)$] $G$ is an extension of a nilpotent $\pi_1(G)$-group by a group~$A$, where $S \unlhd A\le\Aut(S)$,~$S$ is a simple non-abelian group with $s(G)\le s(S)$, and $A/S$ is a $\pi_1(G)$-group.
\end{itemize}
\end{lemma}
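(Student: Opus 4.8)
The plan is to give a classification-free proof, following the strategy of Gruenberg, Kegel and Williams, organized around centralizers of elements whose order is supported on a single connected component. Write $s=s(G)$, let $\pi_1,\dots,\pi_s$ be the connected components of $\Gamma(G)$ with $2\in\pi_1$ when $|G|$ is even, and call $g\in G$ a \emph{$\pi_i$-element} if every prime divisor of its order lies in $\pi_i$. Since $\Gamma(G)$ is disconnected, every element of $G$ is a $\pi_i$-element for a unique $i$.

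First I would establish the basic tool. For $1\neq x\in G$ a $\pi_i$-element, the centralizer $C_G(x)$ is a $\pi_i$-group: if some prime $r\in\pi_j$ with $j\neq i$ divided $|C_G(x)|$, then an $r$-element $1\neq y\in C_G(x)$ together with a suitable prime-power part $x_p$ of $x$ (of order $p\in\pi_i$) would be two commuting elements of coprime orders, so $x_py$ would have order $pr$ — an edge joining two distinct components, a contradiction. An immediate consequence is that every nilpotent subgroup of $G$, and in particular the Fitting subgroup $F(G)$, has all of its prime divisors in a single component.

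Next I would settle the solvable case and show it yields $(1)$ or $(2)$. If $G$ is solvable, then $C_G(F(G))\le F(G)$; writing $\sigma$ for the component with $\pi(F(G))\subseteq\sigma$ and $\tau=\pi(G)\setminus\sigma$, the tool above shows that every nontrivial element of a Hall $\tau$-subgroup $T$ acts fixed-point-freely on $F(G)$, so $F(G)T$ is a Frobenius group with kernel $F(G)$. Analyzing the quotient $G/F(G)$ — again solvable with inherited component structure — one finds that either $G=F(G)T$, so $G$ is a Frobenius group, or a normal series $1\unlhd F(G)\unlhd B\unlhd G$ appears in which $B$ and $G/F(G)$ are Frobenius groups, so $G$ is a $2$-Frobenius group; in both situations $\Gamma(G)$ has exactly two components. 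This reduces everything to the non-solvable case.

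Finally, suppose $G$ is non-solvable and let $R=S(G)<G$. Passing to $\overline{G}=G/R$, whose solvable radical is trivial, the socle is a direct product $S_1\times\cdots\times S_k$ of nonabelian simple groups. The heart of the argument — and the step I expect to be the main obstacle — is to show, using the centralizer tool and the isolation of the non-$\pi_1$ components, that $k=1$, that $R$ is in fact a nilpotent $\pi_1$-group (so that $N:=R$ is the required normal subgroup and $A:=\overline{G}$ satisfies $S\unlhd A\le\Aut(S)$ with $S=S_1$ and $A/S$ a $\pi_1$-group), and that each non-$\pi_1$ component of $\Gamma(G)$ is a union of components of $\Gamma(S)$, which yields $s(G)\le s(S)$. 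The difficulty is that one must simultaneously rule out extra simple factors (a second factor would force primes from distinct components to be joined through commuting socle elements) and non-$\pi_1$ primes inside $R$ (a normal $p$-subgroup for $p$ in a non-$\pi_1$ component, acted on by $S$, would again create forbidden edges), while controlling the action of $G$ on $S$ so as to place $A$ between $S$ and $\Aut(S)$; this coordinated use of normal structure and the component partition is the technical core of the theorem.
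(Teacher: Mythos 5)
A preliminary remark: the paper does not prove this lemma at all --- it is quoted verbatim as Theorem~A of Williams's paper \cite{Williams} (going back to the Gruenberg--Kegel manuscript), so there is no internal proof to compare against; your proposal must therefore stand or fall on its own. Its first two stages are fine and are indeed the standard opening moves: the centralizer tool (a nontrivial $\pi_i$-element has a $\pi_i$-centralizer, since commuting elements of coprime prime orders create an edge) is correct, as is its corollary that $\pi(F(G))$ lies in a single component; and the solvable case, via $C_G(F(G))\le F(G)$, fixed-point-free action of a Hall $\tau$-subgroup on $F(G)$, and an analysis of $G/F(G)$, does lead to the Frobenius/$2$-Frobenius dichotomy, though even here you compress a genuine induction on the quotient into ``one finds that.''

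The genuine gap is that the non-solvable case --- which is the entire content of the theorem --- is announced rather than proved. You correctly list what must be shown ($k=1$ in the socle of $G/S(G)$, that $S(G)$ is a nilpotent $\pi_1$-group, that $A/S$ is a $\pi_1$-group, and $s(G)\le s(S)$), and then explicitly concede that this ``coordinated use of normal structure and the component partition is the technical core of the theorem'' without supplying any of the arguments. Concretely missing are: (i) the diagonal-centralizer argument ruling out $k\ge 2$ --- if an element $x$ of prime order $r\in\pi_j$, $j\ge 2$, permutes several simple factors nontrivially, then $C_G(x)$ contains a diagonal copy of a simple factor, whose even order forces $2\in\pi_j$, contradicting $2\in\pi_1$; while if $x$ normalizes each factor one must analyze $C_{S_i}(x)$ via coprime action; (ii) Thompson's theorem that a group admitting a fixed-point-free automorphism of prime order is nilpotent, which is exactly how one upgrades the $\pi_1$-kernel $N$ from solvable to nilpotent (elements of prime order from non-$\pi_1$ components act fixed-point-freely on $N$) --- you never invoke it, and without it the nilpotency claim in statement $(3)$ has no source; and (iii) the verification that no prime of a non-$\pi_1$ component divides $|S(G)|$ or $|A/S|$, which again requires the fixed-point-free machinery, not just the centralizer tool. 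Since you identify these steps as ``the main obstacle'' and leave them unexecuted, the proposal is an accurate road map of the Gruenberg--Kegel--Williams strategy but not a proof.
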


\vspace{10mm}

\begin{table}[htbp]
{\footnotesize
\centerline{\begin{tabular}{|p{5mm}|p{12mm}|p{15mm}|p{23mm}|p{16mm}|p{16mm}|p{16mm}|p{11mm}|p{6mm}|}
\hline

& & & & & & & &\\ $s(S)$ &~~~$S$ &Restrictions&~~~~~~ $\pi_1(S)$ &~~~~~~$\pi_2(S)$
&~~~~ $\pi_3(S)$ &~~~ $\pi_4(S)$ & $\pi_5(S)$ & $\pi_6(S)$\\ & &~~~
& & & & & &
\\ \hline 4 & $A_2(4)$ & & $\{2\}$ & \{3\} & \{5\} & \{7\} & &\\[3ex]

& $^2B_2(q)$ & $q{=}2^{2m{+}1}{>}2 $ & $\{2\}$ & $\pi(q{-}1)$ & $\pi(q{-}\sqrt
{2q}{+}1)~$&$\pi(q{+}\sqrt {2q}{+}1)$ & &\\[3ex]

& $^2E_6(2)$ & & $\{2, 3, 5, 7, 11\}$ & \{13\} & \{17\} & \{19\} & &\\[3ex] & $E_8(q)$
&$q{\equiv} 2,3(5)$ & $\pi \big(q(q^8{-}1)(q^{12}{-}1)$ & $\pi(\frac{
q^{10}{+}q^5{+}1}{ q^2{+}q{+}1})$ & $\pi(q^8{-}q^4{+}1)$ & $\pi(\frac{
q^{10}{-}q^5{+}1}{ q^2{-}q{+}1})$ & &\\[0.3ex] & & & $(q^{14}{-}1)(q^{18}{-}1)$
& & & & &\\[0.5ex] & & & $(q^{20}{-}1)\big)$ & & & & &\\[3ex] & $M_{22}$ & &
$\{2, 3\}$ & \{5\} & \{7\} & \{11\} & &\\[3ex] &~$J_1$ & & $\{2, 3, 5\}$ & \{7\} & \{11\} & \{19\} &
&\\[3ex] & $O'N$ & & $\{2, 3, 5, 7\}$ & \{11\} & \{19\} & \{31\} & &\\[3ex] & $LyS$ & &
$\{2, 3, 5, 7, 11\}$ & \{31\} &\{37\} & \{67\} & &\\[3ex] & $Fi_{24}'$ & & $\{2, 3, 5, 7,
11, 13\}$ & \{17\} & \{23\} & \{29\} & &\\[3ex] &~$F_1=M$ & & $\{2, 3, 5, 7, 11, 13, $ & \{41\} &
\{59\} &\{71\}& &\\[0.5ex] & & & $17, 19, 23, 29$, & & & & &\\[0.5ex] & & & $31, 47\}$
& & & & &\\[3ex]

5 & $E_8(q)$ &$q{\equiv} 0,1,4(5)$ & $\pi (q(q^8{-}1)(q^{10}{-}1)$ &
$\pi(\frac{q^{10}{+}q^5{+}1}{q^2{+}q{+}1}$) &
$\pi(\frac{q^{10}{-}q^5{+}1}{q^2{-}q{+}1})$ & $\pi(q^8{-}q^4{+}1)$&
$\pi(\frac{q^{10}{+}1}{q^2{+}1})$ &\\[0.5ex]

& & & $(q^{12}{-}1)(q^{14}{-}1)$ & & & & &\\[0.5ex]

& & & $(q^{18}{-}1))$ & & & & &\\[3ex]

6 &~$J_4$ & & $\{2, 3, 5, 7, 11\}$ & \{23\} & \{29\} & \{31\} & \{37\} & \{43\}\\ \hline
\end{tabular}}
}
\caption{\label{table}Finite simple groups~$S$ with $s(S)>3$}
\end{table}

Note that in the Gruenberg--Kegel graph of a nonabelian simple group, the number $2$ is usually non-adjacent to at least one odd number~\cite{VasilVdov_2005}. The following generalization of the Gruenberg--Kegel Theorem was an important tool in investigations of recognizability of a group by spectrum.

\begin{lemma}[{\rm \cite[Propositions~2 and~3]{Vasil_2005}}]\label{VasAnd} Let $G$ be a non-solvable group such that $t(2,G) \ge 2$.
Then the following conditions hold{\rm:}
\begin{itemize}
\item[$(1)$] there exists a simple non-abelian group $S$ such that $S \unlhd G/S(G) \le\Aut(S)${\rm;}
\item[$(2)$] if $\rho \subseteq \pi(G)$ is a coclique in $\Gamma(G)$ with $|\rho|\ge 3$, then at most one of the primes from $\rho$ divides
the product $|S(G)|\cdot|G/S(G):S|$. In particular, $t(S) \ge t(G)-1${\rm;}
\item[$(3)$] one of the following conditions holds{\rm:}
\begin{itemize}
\item[$(a)$]  every $p \in \pi(G)$ which is non-adjacent to $2$ in $\Gamma(G)$ doesn't divide the product $|S(G)|\cdot|G/S(G):S|$. In particular, $t(2, S) \ge t(2, G)${\rm;}
\item[$(b)$] there exists $r \in \pi(S(G))$ which is non-adjacent to $2$ in $\Gamma(G)${\rm;} in this case $t(G) = 3$, $t(2, G) = 2$ and $S \cong   A_7$ or $\PSL_2(q)$ for any odd $q$.
\end{itemize}
\end{itemize}
\end{lemma}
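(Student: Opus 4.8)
The plan is to build everything on one elementary observation about how prime-graph edges behave under passing to subgroups and quotients, and then to run a three-fold structural analysis keyed to the decomposition of $\overline{G}:=G/S(G)$ and to the coprime action of its socle on the solvable radical. The foundational tool I would establish first is the \emph{edge-lifting principle}: for every subgroup $M\le G$ one has $\Gamma(M)\subseteq\Gamma(G)$ trivially, and for every normal subgroup $K\unlhd G$ one has $\Gamma(G/K)\subseteq\Gamma(G)$. The latter follows from the power trick: if $\overline{g}\in G/K$ has order $pq$ and $g$ is any preimage of order $n$, then $pq\mid n$, so $g^{n/pq}$ has order exactly $pq$ and $pq\in\omega(G)$. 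Applying this to $\overline{G}$ and to $\Soc(\overline{G})$ gives the chain $\Gamma(\Soc(\overline{G}))\subseteq\Gamma(\overline{G})\subseteq\Gamma(G)$ on the relevant vertex sets; consequently any coclique of $\Gamma(G)$ contained in $\pi(\Soc(\overline{G}))$ is automatically a coclique of $\Gamma(\Soc(\overline{G}))$, which is exactly what I will need to transfer independence numbers back to the simple group $S$.

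For part $(1)$, since $G$ is non-solvable, $\overline{G}$ has trivial solvable radical, so $\Soc(\overline{G})=S_1\times\cdots\times S_k$ is a direct product of non-abelian simple groups and $C_{\overline{G}}(\Soc(\overline{G}))=1$, giving $\Soc(\overline{G})\unlhd\overline{G}\le\Aut(\Soc(\overline{G}))$. The content is to prove $k=1$. I would use that every non-abelian simple group has even order, so $2$ divides every factor $|S_i|$; hence, taking an involution in one factor and an odd-order element in another, every odd prime dividing $|\Soc(\overline{G})|$ is adjacent to $2$ in $\Gamma(\Soc(\overline{G}))$, and therefore in $\Gamma(G)$. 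Because $t(2,G)\ge 2$ supplies a prime $r$ non-adjacent to $2$, this already forces $r\nmid|\Soc(\overline{G})|$ whenever $k\ge2$. Ruling out $k\ge2$ completely then amounts to showing that an odd prime lying only in the solvable radical or in the outer part $\overline{G}/\Soc(\overline{G})$ still commutes with an involution from a factor it fixes or centralizes; this is a coprime-action/fixed-point argument, and the exceptional configurations where it fails are precisely the content of part $(3)(b)$.

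The engine for part $(2)$ is the coprime fixed-point decomposition: if a non-cyclic group $A$ acts coprimely on a non-trivial group $V$, then $V=\langle C_V(a):a\in A\setminus\{1\}\rangle$, so some non-trivial element of $A$ has non-trivial fixed points. Let $\rho$ be a coclique with $|\rho|\ge3$ and suppose two primes $u,v\in\rho$ both divide $|S(G)|\cdot|\overline{G}:S|=|G|/|S|$. Applying the decomposition to the action of the simple socle $S$ (which always contains non-cyclic abelian subgroups) on a $u$-section of $S(G)$, or to the action of an outer element of order $u$ on $\Soc(\overline{G})$, produces an element whose order is a product of $u$ with a prime of $\pi(S)$; played against the third prime $w\in\rho$ and the adjacencies forced inside $S$, this contradicts that $\rho$ is a coclique. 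Hence at most one prime of $\rho$ escapes $\pi(S)$, and combining with the edge-lifting principle yields $t(S)\ge t(G)-1$.

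Part $(3)$ is the dichotomy and the main obstacle. In case $(a)$, where no prime non-adjacent to $2$ divides $|G|/|S|$, the coclique witnessing $t(2,G)$ lies inside $\pi(S)$ (together with $2\in\pi(S)$), and edge-lifting gives $t(2,S)\ge t(2,G)$ directly. The hard case is $(b)$: a prime $r\in\pi(S(G))$ non-adjacent to $2$. Here the non-adjacency says the $2$-elements of $\overline{G}$ act without non-trivial fixed points on the $r$-section of $S(G)$, which is an extremely rigid condition on the $2$-local structure of $S$; pushing it through the coprime fixed-point decomposition forces $t(G)=3$ and $t(2,G)=2$, and then an appeal to the classification of finite simple groups together with the known prime-graph and $2$-structure data (strongly $2$-embedded or TI Sylow $2$-subgroup behaviour) pins $S$ down to $A_7$ or $\PSL_2(q)$ with $q$ odd. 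I expect the bookkeeping in the coprime-action steps of parts $(1)$ and $(2)$ to be routine but delicate, whereas the genuine difficulty is the classification-dependent identification of $S$ in $(3)(b)$, which is where the hypothesis $t(2,G)\ge2$ earns its keep.
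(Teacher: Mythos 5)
The first thing to note is that the paper does not prove this statement at all: Lemma~\ref{VasAnd} is imported verbatim from A.~V.~Vasil'ev's paper (\cite[Propositions~2 and~3]{Vasil_2005}), and is used in Sections~\ref{Rec=>AS}--\ref{NumbRecBound} as a black box. So your proposal cannot be compared with an internal proof; it has to stand on its own as a reconstruction of Vasil'ev's argument, and as it stands it has genuine gaps. The most serious one is a circularity in your part~$(1)$: you defer the step of ruling out $k\ge 2$ simple factors in $\Soc(G/S(G))$ to ``a coprime-action/fixed-point argument'' whose exceptional configurations you say ``are precisely the content of part $(3)(b)$'' --- but $(3)(b)$ already presupposes the conclusion of part $(1)$, namely that there is a unique simple component $S$, so it cannot be quoted to finish $(1)$. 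Moreover, the underlying claim that an odd prime living only in $S(G)$ or in the outer part ``still commutes with an involution from a factor it fixes or centralizes'' is false as stated: a $2$-element can perfectly well act fixed-point-freely on an $r$-group (inversion on an abelian $r$-group is the basic example), and this possibility is exactly what generates case $(3)(b)$. Handling it requires the Frobenius-complement rigidity --- fixed-point-free action of all $2$-elements forces cyclic or generalized quaternion Sylow $2$-subgroups in the relevant preimage --- followed by the Brauer--Suzuki-type classification of simple groups $S$ whose double cover $2\cdot S$ has generalized quaternion Sylow $2$-subgroups; that is where the list $A_7$ and $\PSL_2(q)$, $q$ odd, actually comes from. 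Your appeal to ``strongly $2$-embedded or TI Sylow $2$-subgroup behaviour'' points at the wrong classification theorem: Bender's theorem on strongly $2$-embedded subgroups yields $\PSL_2(2^n)$, $\Sz(q)$, $\PSU_3(2^n)$, none of which appear in $(3)(b)$.

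Part~$(2)$ of your sketch is also only a gesture. The sentence ``played against the third prime $w\in\rho$ and the adjacencies forced inside $S$, this contradicts that $\rho$ is a coclique'' is not an argument: you never specify which element of order divisible by two primes of $\rho$ is produced, and the coprime fixed-point decomposition alone does not deliver one, precisely because fixed-point-free actions of prime-order elements on solvable sections do occur (Frobenius groups again). The genuine proof runs through Hall subgroups of the solvable radical and the fact that a solvable group has no $3$-coclique in its Gruenberg--Kegel graph (Higman's theorem, Lemmas~\ref{HigmanThm} and~\ref{non-solvable} of this paper), which is how the hypothesis $|\rho|\ge 3$ earns its keep; your sketch never uses that hypothesis in a load-bearing way. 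Finally, nothing in your outline derives the assertions $t(G)=3$ and $t(2,G)=2$ in case $(3)(b)$ beyond the phrase ``pushing it through,'' and these equalities require their own argument. Your preliminary edge-lifting principle ($\omega(G/K)\subseteq\omega(G)$ via powers of preimages) is correct and is indeed part of the standard toolkit, but the three structural steps built on it are each missing their decisive ingredient.
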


To prove the main results of this paper, we need the following easily-proved assertions.

\begin{lemma}\label{GensNumber}
Let $G$ be a group with a normal series of length $r$ with cyclic factors.
Then any subgroup of $G$ can be generated by at most $r$ elements.
\end{lemma}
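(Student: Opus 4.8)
The plan is to prove the statement by induction on the length $r$ of the normal series, reducing at each stage to a quotient that still satisfies the hypothesis with a shorter series. Suppose we have a normal series
\[
1 = G_0 \unlhd G_1 \unlhd \cdots \unlhd G_r = G
\]
in which each factor $G_i/G_{i-1}$ is cyclic. The base case $r=0$ is trivial, and $r=1$ says that a cyclic group is generated by one element. For the inductive step, let $N = G_1$, which is a cyclic normal subgroup of $G$, and observe that $G/N$ inherits from the original series a normal series $1 = G_1/N \unlhd G_2/N \unlhd \cdots \unlhd G_r/N = G/N$ of length $r-1$ whose factors $(G_i/N)/(G_{i-1}/N) \cong G_i/G_{i-1}$ are cyclic.

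Now let $U \le G$ be an arbitrary subgroup. The natural projection $\varphi\colon G \to G/N$ carries $U$ onto the subgroup $UN/N \cong U/(U\cap N)$ of $G/N$. By the induction hypothesis applied to $G/N$, the image $UN/N$ is generated by at most $r-1$ elements; pick preimages $u_1, \dots, u_{r-1} \in U$ of a generating set. Separately, $U \cap N$ is a subgroup of the cyclic group $N$, hence itself cyclic, so it is generated by a single element $u_0$. The claim is then that $U = \langle u_0, u_1, \dots, u_{r-1}\rangle$, which exhibits $U$ as generated by at most $r$ elements and completes the induction.

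To verify the generation claim, take any $u \in U$. Its image $\varphi(u) \in UN/N$ lies in the subgroup generated by the images $\varphi(u_1), \dots, \varphi(u_{r-1})$, so there is a word $w = u_{i_1}^{\pm 1} \cdots u_{i_k}^{\pm 1}$ in the $u_j$ ($1 \le j \le r-1$) with $\varphi(u) = \varphi(w)$. Then $w^{-1}u \in \ker\varphi \cap U = U \cap N = \langle u_0\rangle$, so $u = w \cdot (w^{-1}u) \in \langle u_0, u_1, \dots, u_{r-1}\rangle$. Since $u \in U$ was arbitrary and each generator lies in $U$, we conclude $U = \langle u_0, \dots, u_{r-1}\rangle$.

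The argument is entirely routine; there is no real obstacle, the one point requiring a moment's care being the correct handling of the $N$-part. The subtlety is that the generators $u_1,\dots,u_{r-1}$ coming from the quotient need not by themselves capture $U\cap N$ — a proper subgroup of $N$ that could be invisible in $G/N$ — which is precisely why we adjoin the single extra generator $u_0$ of the cyclic group $U\cap N$. Keeping track of this guarantees the bound is $r$ rather than $r-1$, and the bound $r$ is visibly sharp (e.g.\ an elementary abelian group of order $p^r$ with the obvious composition series).
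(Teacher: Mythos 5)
Your proof is correct and is essentially the paper's argument in inductive packaging: where you quotient by the bottom cyclic term $N=G_1$, apply the inductive hypothesis to $UN/N\cong U/(U\cap N)$, and adjoin one generator of the cyclic group $U\cap N$, the paper runs the same mechanism in a single pass, intersecting the subgroup $H$ with the whole series, observing that each factor $(G_{i-1}\cap H)/(G_i\cap H)$ embeds in the cyclic factor $G_{i-1}/G_i$, and checking by backward induction that one generator per factor generates $H$. Both arguments rest on exactly the same two facts --- the second isomorphism theorem and the fact that subgroups of cyclic groups are cyclic --- so your write-up matches the paper's proof in substance.
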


\begin{proof}
Let
\[G=G_0>G_1>\cdots>G_r=1\]
be the normal series with cyclic factors. Let $H$ be an arbitrary subgroup
of $G$. For each $i$, $G_{i+1}\cap H$ is a normal subgroup of $G_i\cap H$,
with factor group
\[(G_{i-1}\cap H)/(G_i\cap H)\cong (G_{i-1}\cap H)G_i/G_i\le G_{i-1}/G_i\]
which is cyclic; let $h_i$ be an element of $H$ such that
$(G_i\cap H)h_i$ generates this quotient.

We claim that $h_1,\ldots,h_r$ generate $H$. This is proved by backward
induction. For $i=r$, we have that $h_r$ generates $H\cap G_{r-1}$. Suppose
that $h_{i+1},\ldots,h_r$ generate $H\cap G_i$. Then it is clear from the
above that $h_i,\ldots,h_r$ generate $H\cap G_{i-1}$. So the induction goes
through, and the claim is proved on putting $i=1$. \hfill$\Box$

\end{proof}

\begin{lemma}\label{SupplNumber}
Let $G$ be a group with a cyclic normal subgroup $C$ such that $G/C$ is $k$-generated.
Then the number of pairwise distinct supplements of $C$ in $G$ {\rm(}subgroups $H$ such that $G=HC${\rm)} is at most $|C|^{k+1}$.
\end{lemma}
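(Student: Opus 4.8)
The plan is to parametrize the supplements of $C$ by tuples of elements of $C$ and then simply count. Since $G/C$ is $k$-generated, I would fix once and for all elements $g_1,\dots,g_k\in G$ whose images $\bar g_1,\dots,\bar g_k$ generate $G/C$. Given a tuple $(c_1,\dots,c_k,d)\in C^{k+1}$, set $h_i=g_ic_i$ and define $\Phi(c_1,\dots,c_k,d)=\langle h_1,\dots,h_k,d\rangle$. This is a map from the set $C^{k+1}$, which has exactly $|C|^{k+1}$ elements, to the set of subgroups of $G$. The entire argument then reduces to showing that \emph{every} supplement of $C$ lies in the image of $\Phi$; the bound $|C|^{k+1}$ on the number of supplements follows immediately, since the number of distinct subgroups in the image is at most $|C^{k+1}|$.

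To prove that $\Phi$ hits every supplement, let $H\le G$ satisfy $HC=G$. First I would record two structural facts. Since $C$ is cyclic, the subgroup $H\cap C$ is cyclic, so $H\cap C=\langle d_0\rangle$ for a single element $d_0\in C$ (with $d_0=1$ if the intersection is trivial). Second, the second isomorphism theorem gives $H/(H\cap C)\cong HC/C=G/C$, so $H/(H\cap C)$ is $k$-generated. Because $HC=G$, for each $i$ I can write $g_i=h_i'c_i'$ with $h_i'\in H$ and $c_i'\in C$; setting $c_i=(c_i')^{-1}$ yields $g_ic_i=h_i'\in H$, and the image of $h_i'$ in $G/C$ is precisely $\bar g_i$.

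It then remains to verify that $K:=\langle h_1',\dots,h_k',d_0\rangle$ equals $H$. Clearly $K\le H$. Conversely, the images of $h_1',\dots,h_k'$ in $H/(H\cap C)\cong G/C$ are $\bar g_1,\dots,\bar g_k$, which generate $G/C$, whence $K(H\cap C)=H$; and since $d_0\in K$ generates $H\cap C$, we have $H\cap C\le K$, so $K=H$. Thus $\Phi(c_1,\dots,c_k,d_0)=H$, and every supplement is in the image of $\Phi$, giving at most $|C|^{k+1}$ supplements. I expect the only genuine subtlety—and the step I would state most carefully—to be this surjectivity check: the content is that adjusting the fixed lifts $g_i$ by elements of $C$ suffices to realize an arbitrary supplement, while the single extra parameter $d$ absorbs the intersection $H\cap C$, which is one-generated precisely because $C$ is cyclic. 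Everything else, including the observation that $|C|$ is finite, is routine.
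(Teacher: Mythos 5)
Your proof is correct and follows essentially the same strategy as the paper's: you parametrize each supplement $H$ by a generator of the cyclic intersection $H\cap C$ together with lifts of the generators of $G/C$ chosen inside the cosets $g_iC$, then bound the count by $|C|^{k+1}$. The only cosmetic difference is that you package this as an explicit surjection $\Phi\colon C^{k+1}\to\{\text{supplements}\}$ and verify $K=H$ via $K(H\cap C)=H$ rather than by the order comparison $|K|\ge|H\cap C|\cdot|G/C|$ used in the paper; both verifications are equivalent.
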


\begin{proof}

Let $\overline{g}_1, \ldots, \overline{g}_k$ be generators of $G/C$, and $g_1, \ldots, g_k$ be their preimages in $G$.
Define $C_i=Cg_i$ to be the corresponding right cosets. It is clear that $|C_i|=|C|$ for each $i$.

Now let $H$ be a supplement to $C$ in $G$. Consider $H_1=H\cap C$. Since $C$ is cyclic, we have $H_1=\langle h \rangle$ is cyclic.
Since $G=HC$, we have $H/(H\cap C) \cong G/C$. Let $h_1, \ldots, h_k$ be preimages in $H$ of the elements $\overline{g}_1, \ldots, \overline{g}_k$ from $H/(H\cap C) \cong G/C$.

It is clear that $h \in C$ and $h_i \in C_i$ for each $i$. We claim that $$H = \langle h, h_1, \ldots, h_k\rangle.$$
It is easy to see that $ h, h_1, \ldots, h_k \in H$, therefore, $ \langle h, h_1, \ldots, h_k\rangle \le H$.
Let $K = \langle h, h_1, \ldots, h_k\rangle$. Show that $|K| \ge |H|$ and, therefore, $K=H$. Indeed, $h \in K$, therefore, $K \cap C \ge H \cap C =\langle h \rangle$, and $K/(K \cap C) \ge \langle \overline{g}_1, \ldots, \overline{g}_k \rangle=G/C$. Thus, $|K|\ge |H\cap C|\cdot |G/C|=|H|$ and therefore, $|K|=|H|$.

So, we have proved that each supplement $H$ to $C$ in $G$ can be generated by an element $h \in C$ and elements $h_1, \ldots h_k$ such that $h_i \in C_i$ for each $i$. It is easy to see that there are at most $|C|$ possibilities to chose $h$ and at most $|C_i|=|C|$ possibilities to chose each $h_i$. Thus, there are at most $|C|^{k+1}$ pairwise distinct supplements to $C$ in $G$. \hfill$\Box$

\end{proof}

\begin{lemma}\label{LieType} Let $\pi$ be a finite set of primes, and $S=G_n(q)$, where $q=p^l$, be a simple group of Lie type of Lie rank $n$ with base field $GF(q)$ such that $\pi(S)\subseteq \pi$. Then the following statements hold{\rm:}
\begin{itemize}
\item[$(1)$] there are at most $|\pi|$ choices for $p${\rm;}
\item[$(2)$] there are at most $|\pi|+1$ choices for $l${\rm;}
\item[$(3)$] $d(l) \le |\pi|+1$, where $d(l)$ is the number of pairwise distinct divisors of $l${\rm;}
\item[$(4)$] If $S$ s a classical group, then $n \le 2|\pi|+3$.
\end{itemize}
\end{lemma}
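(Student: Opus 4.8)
The plan is to turn each arithmetic feature of $S$ into a prime forced to lie in the fixed finite set $\pi$, via primitive prime divisors and Zsigmondy's theorem (Lemma~\ref{Zhigmondy}): if $r$ is a primitive prime divisor of $p^{e}-1$, then $e$ is recovered as the multiplicative order $e(r,p)$, so distinct exponents yield distinct primes. Statement $(1)$ is then immediate, since the characteristic $p$ divides $|S|$ and hence $p\in\pi(S)\subseteq\pi$, leaving at most $|\pi|$ possibilities for $p$.

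For $(2)$ and $(3)$ I would fix $p$ and work inside the factor $q-1=p^{l}-1$, which divides $|S|$ for every group of Lie type of rank at least one. For each divisor $e$ of $l$ we have $p^{e}-1\mid q-1$, so any primitive prime divisor of $p^{e}-1$ divides $|S|$ and lies in $\pi$ (one must check it is not cancelled by the order of the centre, e.g.\ the factor $\gcd(n,q-1)$ for $\PSL_{n}(q)$; here I would use that for $e\ge 2$ the chosen prime is odd and appears with multiplicity in $\prod_i(q^{i}-1)$). I would then define an injection from the set of divisors of $l$ into $\pi$ by sending $e=1$ to the prime $2\in\pi$ and each $e\ge 2$ to an odd primitive prime divisor $r_{e}$ of $p^{e}-1$; injectivity holds because the $r_{e}$ are odd with pairwise distinct orders $e(r_{e},p)=e$, hence none equals $2$. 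By Lemma~\ref{Zhigmondy} the only $e\ge 2$ for which $r_{e}$ fails to exist is $e=6$ when $p=2$, or $e=2$ when $p$ is a Mersenne prime, a single exceptional divisor in either case. This gives $d(l)\le|\pi|+1$, proving $(3)$, and the same assignment applied to the set of all admissible exponents $l$ (rather than to the divisors of a single one) proves $(2)$.

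For $(4)$ I would run the analogous argument in the base $q$ instead of $p$, now exploiting that $e(r,q)\ge 2$ forces $r$ to avoid the centre. The order of a classical group of rank $n$ carries many cyclotomic factors: for $\PSL_{n}(q)$ the factors $q^{i}-1$ with $2\le i\le n$ (giving the distinct orders $2,\dots,n$); for the symplectic and orthogonal groups the factors $q^{2i}-1$ with $1\le i\le n$; and for $\PSU_{n}(q)$ the factors $q^{i}-(-1)^{i}$. Each such factor supplies a primitive prime divisor with a distinct value of $e(\cdot,q)$, and all these primes lie in $\pi$. For the linear, symplectic and orthogonal families one already obtains on the order of $n$ distinct primes, whereas the unitary family is the binding case: the odd-index factors $q^{i}+1$ guarantee only about $n/2$ distinct orders $2i$. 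Counting these and subtracting the finitely many Zsigmondy exceptions yields $n\le 2|\pi|+3$, the factor of two being exactly the loss coming from $\PSU_{n}(q)$.

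The main obstacle is not the injection itself but the exception and cancellation bookkeeping. Obtaining the sharp additive constant $+1$ in $(2)$ and $(3)$ requires the separate treatment of characteristic $2$ and of Mersenne primes indicated above, and one must verify in each family (classical and exceptional) that the selected primitive prime divisors really survive in $|S|$ after dividing out the centre. In $(4)$ the same uniform care across the several classical families, combined with the factor-of-two loss forced by the unitary groups, is precisely what produces the looser constant $2|\pi|+3$.
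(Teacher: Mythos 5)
Your proposal is correct and takes essentially the same route as the paper's proof: in both, Zsigmondy's theorem (Lemma~\ref{Zhigmondy}) converts distinct exponents into distinct primitive prime divisors lying in $\pi$, giving $(1)$--$(3)$, and $(4)$ follows by counting cyclotomic factors $q^m-1$ (respectively $q^{2m}-1$) in the classical group orders, with the unitary family ${}^2A_n(q)$, where only about $n/2$ such factors are available, forcing the factor of two in the bound $n\le 2|\pi|+3$. The paper's bookkeeping of the exceptional cases is slightly cruder (it simply allows up to two missing divisors and adds the prime $p$), but the bounds obtained are identical to yours.
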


\begin{proof} It is clear that $p$ divides $|S|$, therefore, $p \in \pi$ and $(1)$ holds.

Note that $q-1$ divides $|S|$, and by Lemma~\ref{Zhigmondy}, excluding the cases $q=2$, $q=2^6$, and $q$ is a Mersenne prime, the number $p^l-1$ has a primitive prime divisor $r \in \pi(S)\setminus \{p\}$. Note that there are at most $|\pi(S)|-1$ choices for $r$ and, therefore, at most $|\pi(S)|-1 < |\pi|$ choices for $l$ (or at most $|\pi(S)|+1 \le |\pi|+1$ or $|\pi(S)| \le |\pi|$ choices for $l$ if $p=2$ or $p$ is a Mersenne prime, respectively). Thus, $(2)$ holds.

Note that if $d(l)$ is the number of pairwise distinct divisors of $l$, then by Lemma~\ref{Zhigmondy}, there exist at least $d(l)-2$ pairwise distinct primes dividing $p^l-1$, and $p$ and all these primes are in $\pi$. Thus, $d(l)\le |\pi|+1$ and $(3)$ holds.

If $S=A_n(q)$, then  there is a divisor of $|S|$ of the form $q^m-1$ for each $1 \le m \le n+1$, therefore, with possible one or two exceptions due to Lemma~\ref{Zhigmondy}, the primitive prime divisor of the number $q^m-1$ for each $m \le n+1$ must lie in $\pi$. For each remainder family of classical groups, there is a divisor of $|S|$ of the form $q^{2m}-1$ for each $m \le m_0$, where $m_0$ grows linearly with the rank. In the five families $B_n(q)$, $C_n(q)$, $D_n(q)$, ${}^2A_n(q)$ and ${}^2D_n(q)$, we have $m_0=n$, $m_0=n$, $m_0=n-1$, $m_0=\lfloor n/2\rfloor$, and $m_0=n-1$, respectively. Now a primitive prime divisor of the number $q^{2m}-1$ for each $m \le m_0$ with possible one or two exceptions due to Lemma~\ref{Zhigmondy} must lie in $\pi$. Thus, at least $n \le 2|\pi|+3$ in each case  and $(4)$ holds. \hfill$\Box$

\end{proof}

\begin{lemma}{\rm (see \cite[Propositions~2.5,~3.2,~4.5]{VasilVdov_2005} and \cite[Propositions~2.7]{VasilVdov_2011})}\label{AdjCritE8}
Let $G\cong E_8(q)$, where $q$ is a power of $p=2$. Let $r, s \in \pi(G)$ and $r\not = s$. Then $r$ and $s$ are non-adjacent in $\Gamma(G)$ if and only if one of the following conditions holds{\rm:}

$(1)$ $2=p \not \in \{r,s\}$, $1\le e(r,q) <e(s,q)$, and one of the following conditions holds{\rm:}

$\mbox{ }\mbox{ }\mbox{ }\mbox{ }\mbox{ }$ $(1a)$ $e(s,q) = 6$ and $e(r,q) = 5${\rm;}

$\mbox{ }\mbox{ }\mbox{ }\mbox{ }\mbox{ }$ $(1b)$ $e(s,q) \in \{7, 14\}$ and $e(r,q) \ge  3${\rm;}

$\mbox{ }\mbox{ }\mbox{ }\mbox{ }\mbox{ }$ $(1c)$ $e(s,q) = 9$ and $k \ge 4${\rm;}

$\mbox{ }\mbox{ }\mbox{ }\mbox{ }\mbox{ }$ $(1d)$ $e(s,q) \in \{8, 12\}$, $e(r,q) \ge 5$, and $ e(r,q) \not = 6${\rm;}

$\mbox{ }\mbox{ }\mbox{ }\mbox{ }\mbox{ }$ $(1e)$ $e(s,q) = 10$, $e(r,q) \ge 3$, and $e(r,q) \not = 4,6${\rm;}

$\mbox{ }\mbox{ }\mbox{ }\mbox{ }\mbox{ }$ $(1f)$ $e(s,q) = 18$ and $e(r,q)\not \in \{ 1, 2, 6\}${\rm;}

$\mbox{ }\mbox{ }\mbox{ }\mbox{ }\mbox{ }$ $(1g)$ $e(s,q) = 20$ and $r \cdot e(r,q) \not = 20${\rm;}

$\mbox{ }\mbox{ }\mbox{ }\mbox{ }\mbox{ }$ $(1h)$ $e(s,q) \in \{15, 24, 30\}${\rm;}

$(2)$ $r=p=2$ and $ e(s,q) \in  \{15, 20, 24, 30\}$.

\end{lemma}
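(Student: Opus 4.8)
The statement is the specialization to $G\cong E_8(q)$ with $q=2^l$ of the standard machinery that reads adjacency of primes off the maximal torus structure of a group of Lie type, and I would reconstruct it as follows.

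\emph{Reduction to common tori (the case $2=p\notin\{r,s\}$).} Since $r$ and $s$ are coprime to the characteristic, any element of order $rs$ is semisimple and so lies in a maximal torus; conversely a maximal torus is abelian, hence contains an element of order $rs$ as soon as $rs$ divides its order. Thus $r$ and $s$ are adjacent in $\Gamma(G)$ if and only if $E_8(q)$ has a maximal torus $T$ with $rs\mid|T|$. The maximal tori are parametrized by the conjugacy classes of the Weyl group $W(E_8)$, with $|T_w|=|\det(q\,\mathrm{Id}-w)|=\prod_d\Phi_d(q)^{m_d(w)}$, a product of cyclotomic values of total degree $\sum_d m_d(w)\varphi(d)=8$. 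Writing $a=e(r,q)$ and $b=e(s,q)$, Lemma~\ref{Zhigmondy} gives $r\mid\Phi_a(q)$ and $s\mid\Phi_b(q)$; more precisely a prime $r$ with $e(r,q)=a$ divides $\Phi_d(q)$ exactly for $d\in\{a,ar,ar^2,\dots\}$. So adjacency is governed by whether some single $w\in W(E_8)$ has $\Phi_a(q)$ and $\Phi_b(q)$ (or a ramified factor) simultaneously dividing $\det(q\,\mathrm{Id}-w)$. A necessary condition is $\varphi(a)+\varphi(b)\le 8$.

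\emph{Torus bookkeeping and clauses (1a)--(1h).} I would first list the $d$ with $\Phi_d\mid|E_8(q)|$, namely $d\in\{1,2,3,4,5,6,7,8,9,10,12,14,15,18,20,24,30\}$, then tabulate from the $112$ class characteristic polynomials of $W(E_8)$ exactly which multisets $\{\Phi_d\}$ occur, and finally, for each ordered pair $a<b$, decide whether $\Phi_a\Phi_b$ divides some class polynomial. This produces clauses (1a)--(1h): fixing the larger value $b=e(s,q)$, the $\Phi_b$-part occupies $\varphi(b)$ of the $8$ dimensions, and non-adjacency to a prime with $e(r,q)=a$ means precisely that no class leaves room for (a power of) $\Phi_a$ alongside $\Phi_b$. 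In particular the values $b\in\{15,24,30\}$ have $\varphi(b)=8$, so $\Phi_b$ alone fills the torus and $s$ is non-adjacent to \emph{every} odd $r\neq p$, which is clause (1h). The subtlety isolated in (1g) is that, although $\varphi(20)=8$, a prime $r$ with $e(r,q)=4$ and $r=5$ (so $r\cdot e(r,q)=20$) \emph{also} divides $\Phi_{20}(q)$ by the ramification $r\mid\Phi_{ar}(q)$; hence the cyclic torus of order $\Phi_{20}(q)$ already has order divisible by $5$, the two primes coexist there, and they are adjacent. I would check that $20$ is the only relevant $b$ where such ramification links an otherwise-full $\Phi_b$-torus to a smaller $e$-value, which is why $20$ appears in (1g) rather than (1h).

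\emph{The case $r=p=2$.} Here an element of order $2s$ is the product of a commuting unipotent involution and a semisimple element $t$ of order $s$, so $2$ and $s$ are adjacent if and only if some semisimple $t$ of order $s$ has $C_G(t)$ of even order, i.e.\ $C_G(t)$ contains a unipotent involution over $\GF(2^l)$. Using that connected centralizers of semisimple elements in $E_8$ are reductive subsystem subgroups of maximal rank, I would show $C_G(t)$ fails to contain an involution exactly when $t$ is regular, i.e.\ $C_G(t)$ is a maximal torus, which in characteristic $2$ has odd order. This regularity occurs precisely for the $\Phi_b$-tori with $\varphi(b)=8$, namely $b=e(s,q)\in\{15,20,24,30\}$, giving clause (2); for all other $b$ the connected centralizer has positive semisimple rank and hence contains unipotent involutions, so $2$ and $s$ are adjacent.

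\emph{Main obstacle.} The genuine labour is the exhaustive determination, from the $112$ conjugacy classes of $W(E_8)$, of which pairs of cyclotomic factors can share a maximal torus, together with the careful treatment of the small $e$-values $1,2,3,4,5,6$, where a single prime may divide several cyclotomic factors and where Lemma~\ref{Zhigmondy} admits its exceptions (notably $q=2$ with small exponents). Organizing this so that the answer provably depends only on $e(r,q)$ and $e(s,q)$, rather than on the individual primes, is the crux and is what pins down the precise shape of clauses (1a)--(1h).
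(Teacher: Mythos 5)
This lemma is not proved in the paper at all: it is imported verbatim, with the proof delegated to \cite[Propositions~2.5,~3.2,~4.5]{VasilVdov_2005} and \cite[Proposition~2.7]{VasilVdov_2011}, so there is no in-paper argument to compare against. Your reconstruction follows exactly the route of those cited references --- semisimple elements of order $rs$ versus orders of the maximal tori $T_w$, $w\in W(E_8)$, with adjacency read off from which products of cyclotomic values $\Phi_a(q)\Phi_b(q)$ (allowing the ramified divisibility $r\mid\Phi_{ar}(q)$, which is precisely what isolates clause (1g)) occur in a class characteristic polynomial, and the characteristic-$2$ clause (2) handled via regularity of semisimple elements with $\varphi(e(s,q))=8$, all tori having odd order when $q$ is even --- and it is sound in outline; the exhaustive Weyl-group bookkeeping you flag as the main obstacle is exactly the content of the cited propositions. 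One small point your reading implicitly corrects: the condition ``$k\ge 4$'' in (1c) is a typo in the paper's transcription and should read $e(r,q)\ge 4$.
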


\begin{lemma}\label{E8(2)} Let $G=E_8(2)$. Then the following statements hold{\rm:}

$(1)$ $|\pi(G)|=16$ and $\pi(G)=\{2$, $3$, $5$, $7$, $11$, $13$, $17$, $19$, $31$, $41$, $43$, $73$, $127$, $151$, $241$, $331\}$.

$(2)$ If $S=E_8(q)$ for $q>2$, then $|\pi(S)|>|\pi(G)|$.

$(3)$ Adjacency in $\Gamma(G)$ is presented in Table~{\rm\ref{table2}}. In particular, $s(G)=4$, $|\pi_1(G)|=13$, and $|\pi_i(G)|=1$ for $i \in \{2,3,4\}$.

\end{lemma}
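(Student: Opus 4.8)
The plan is to prove the three parts by direct computation, using Table~\ref{table} to read off the structure of $\Gamma(E_8(2))$, Zsigmondy's theorem (Lemma~\ref{Zhigmondy}) to count primes for $q>2$, and the adjacency criterion (Lemma~\ref{AdjCritE8}) to fill in the edges. For part~(1) I would specialise the row of Table~\ref{table} for $E_8(q)$ to $q=2$. Since $2\equiv 2\pmod 5$, the group $E_8(2)$ falls in the block $s(S)=4$, so $s(E_8(2))=4$ and $\pi(E_8(2))$ is the union of $\pi_1=\pi\big(2(2^8-1)(2^{12}-1)(2^{14}-1)(2^{18}-1)(2^{20}-1)\big)$ with the three further components obtained by evaluating the displayed quotients at $q=2$. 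Factoring the small numbers $2^8-1=3\cdot5\cdot17$, $2^{12}-1=3^2\cdot5\cdot7\cdot13$, $2^{14}-1=3\cdot43\cdot127$, $2^{18}-1=3^3\cdot7\cdot19\cdot73$, $2^{20}-1=3\cdot5^2\cdot11\cdot31\cdot41$ gives $\pi_1=\{2,3,5,7,11,13,17,19,31,41,43,73,127\}$, while $\tfrac{2^{10}+2^5+1}{2^2+2+1}=151$, $2^8-2^4+1=241$, and $\tfrac{2^{10}-2^5+1}{2^2-2+1}=331$ are all prime, giving $\pi_2=\{151\}$, $\pi_3=\{241\}$, $\pi_4=\{331\}$. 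This yields the listed $16$ primes and simultaneously establishes $|\pi_1|=13$ and $|\pi_i|=1$ for $i\in\{2,3,4\}$, settling~(1) and the component counts in~(3).

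For part~(2), let $S=E_8(q)$ with $q=p^l>2$. From the order
\[|E_8(q)| = q^{120}(q^2-1)(q^8-1)(q^{12}-1)(q^{14}-1)(q^{18}-1)(q^{20}-1)(q^{24}-1)(q^{30}-1)\]
and the divisibility $q^a-1\mid q^d-1$ for $a\mid d$, the set of exponents contributing cyclotomic prime divisors is $E=\{1,2,3,4,5,6,7,8,9,10,12,14,15,18,20,24,30\}$, the divisors of the degrees. For each of the fifteen values $m\in E$ with $m\ge 3$, Lemma~\ref{Zhigmondy} supplies a primitive prime divisor $r_m$ of $q^m-1$ (the only relevant Zsigmondy exception, $q=2$ with $m=6$, is excluded as $q>2$); these $r_m$ are pairwise distinct, odd since $e(r_m,q)=m\ge 3$, and different from $p$. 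With $p$ this already gives $16$ primes, and I would produce a $17$th by a case split: if $q$ is even then $m=2$ yields a further odd primitive prime divisor (no Mersenne exception, as $q$ is even); if $q$ is odd then $2\mid q^2-1$ contributes the prime $2$, distinct from $p$ and from every $r_m$. Hence $|\pi(E_8(q))|\ge 17>16=|\pi(E_8(2))|$ in all cases.

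For part~(3) the three singleton components are already in hand, so it remains to reconstruct the full adjacency of $\pi_1$ (Table~\ref{table2}). I would record $e(r,2)$ for the fifteen odd primes, namely $e=2,3,4,5,7,8,9,10,12,14,15,18,20,24,30$ for $r=3,7,5,31,127,17,73,11,13,43,151,19,41,241,331$ respectively, and then apply Lemma~\ref{AdjCritE8} with $q=2$ to each pair. As a consistency check confirming the singleton components, for $r\in\{151,241,331\}$ (values $e\in\{15,24,30\}$) case~(1h) forces non-adjacency to every odd prime of smaller $e$-value; the comparisons with the primes of $e$-value $18,20$ and those among $\{15,24,30\}$ themselves are handled by cases~(1f), (1g), and~(1h); and case~(2) gives non-adjacency to $p=2$. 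Thus each of these three vertices is isolated. The remaining edges inside $\pi_1$ are then read off by the same routine case analysis.

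The main obstacle is the exhaustive pairwise application of Lemma~\ref{AdjCritE8} required to produce Table~\ref{table2}: it is a $\binom{16}{2}$-entry check and the step most prone to clerical error, though organising it by $e$-value rather than by prime keeps it tractable. The one genuinely delicate point in part~(2) is the Mersenne case $q=2^k-1$, where $q^2-1$ has no primitive prime divisor; the case split above is arranged precisely so that for odd $q$ the prime $2$ supplies the missing seventeenth vertex there.
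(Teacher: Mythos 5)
Your proposal is correct and follows essentially the same route as the paper: part (2) by counting primitive prime divisors of $q^m-1$ via Zsigmondy's theorem (Lemma~\ref{Zhigmondy}) to get $|\pi(S)|\ge 17$, and part (3) by computing $e(r,2)$ for each odd prime in $\pi(G)$ and applying Lemma~\ref{AdjCritE8} --- your $e$-values match the paper's list exactly. The only difference is cosmetic: for part (1) the paper simply cites \cite{Atlas}, whereas you derive the prime list and the component structure by specialising the $E_8(q)$, $q\equiv 2,3\pmod 5$ row of Table~\ref{table} at $q=2$ and factoring, which is a self-contained verification of the same facts.
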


\begin{table}[htbp]
{\footnotesize
\centerline{\begin{tabular}{|p{20mm}|p{10mm}|p{50mm}|}
\hline
$\mbox{Vertex }x$ & $\mbox{Degree}$ & $\mbox{Neighbors of }x \mbox{ in }\Gamma(E_8(2))$\\ \hline

 $2$& $11$ & $3$, $5$, $7$, $11$, $13$, $17$, $19$, $31$, $43$, $73$, $127$ \\ \hline

 $3$ & $11$ & $2$, $5$, $7$, $11$, $13$, $17$, $19$, $31$, $43$, $73$, $127$\\ \hline

 $5$ & $8$ & $2$, $3$, $7$, $11$, $13$, $17$, $31$, $41$ \\   \hline

 $7$ & $7$ & $2$, $3$, $5$, $13$, $17$, $31$, $73$\\ \hline

 $11$ & $3$ & $2$, $3$, $5$ \\ \hline

 $13$ & $4$ & $2$, $3$, $5$, $7$ \\ \hline

 $17$ & $4$ & $2$, $3$, $5$, $7$ \\ \hline

 $19$ & $2$ & $2$, $3$ \\ \hline

 $31$ & $4$ & $2$, $3$, $5$, $7$ \\ \hline

 $41$ & $1$ & $5$ \\ \hline

 $43$ & $2$ & $2$, $3$ \\ \hline

 $73$ & $3$ & $2$, $3$, $7$ \\ \hline

 $127$ & $2$ & $2$, $3$ \\ \hline

 $151$ & $0$ & \\ \hline

 $241$& $0$ & \\ \hline

 $331$&  $0$ & \\ \hline

\end{tabular}

}}
\caption{\label{table2}Adjacency of vertices in $\Gamma(E_8(2))$}
\end{table}

\begin{proof} $(1)$ See, for example, \cite{Atlas}.

$(2)$ We have $|S|=q\cdot \prod_{i \in \{2,8,12, 14, 18, 20, 24, 30\}} (q^i-1)$, therefore, by Lemma~\ref{Zhigmondy}, $|\pi(S)|\ge 17 >|\pi(G)|$.

$(3)$ Note that $e(3,2)=2$, $e(5,2)=4$, $e(7,2)=3$, $e(11,2)=10$,
$e(13,2)=12$,
$e(17,2)=8$,
$e(19,2)=18$,
$e(31,2)=5$,
$e(41,2)=20$,
$e(43,2)=14$,
$e(73,2)=9$,
$e(127,2)=7$,
$e(151,2)=15$,
$e(241,2)=24$,
$e(331,2)=30$. Now Lemma~\ref{AdjCritE8} gives adjacency in $\Gamma(G)$. \hfill$\Box$

\end{proof}

\begin{lemma}{\rm (see \cite{EAtlas})}\label{F1=M} Let $G$ be the sporadic group $M$. Then the following statements hold{\rm:}

$(1)$ $|\pi(G)|=15$ and $\pi(G)=\{2$, $3$, $5$, $7$, $11$, $13$, $17$, $19$, $23$, $29$, $31$, $41$, $47$, $59$, $71\}$.

$(2)$ Adjacency in $\Gamma(G)$ is presented in Table~{\rm\ref{table3}}.

\end{lemma}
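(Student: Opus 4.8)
The plan is to obtain both parts directly from the standard data on the Monster $M$ recorded in the ATLAS~\cite{Atlas} and its electronic version~\cite{EAtlas}. Unlike the groups of Lie type treated in the earlier lemmas, the sporadic group $M$ admits no uniform adjacency criterion, so the argument is simply a careful reading of its order and of its set of element orders $\omega(G)$.

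For $(1)$ I would record the order
\[|M|=2^{46}\cdot 3^{20}\cdot 5^{9}\cdot 7^{6}\cdot 11^{2}\cdot 13^{3}\cdot 17\cdot 19\cdot 23\cdot 29\cdot 31\cdot 41\cdot 47\cdot 59\cdot 71,\]
whose set of prime divisors is exactly the claimed fifteen-element set, so that $|\pi(G)|=15$ and $\pi(G)$ is as stated.

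For $(2)$ the key reduction is that distinct primes $p,q\in\pi(G)$ are adjacent in $\Gamma(G)$ if and only if $pq\in\omega(G)$, and that $pq\in\omega(G)$ if and only if $pq$ divides some element order of $M$ (a cyclic group of order $n$ contains an element of order $pq$ exactly when $pq\mid n$). Hence it suffices to know $\omega(G)$, which is listed in~\cite{EAtlas}; the element orders of $M$ reach at most $119=7\cdot 17$, so in particular no product of two primes both exceeding $10$ can occur. I would then run through the $\binom{15}{2}=105$ pairs of primes, testing each product against the list of element orders; this yields precisely the adjacencies of Table~\ref{table3}. In particular $41$, $59$ and $71$ are non-adjacent to every other vertex, so $s(G)=4$, with $\pi_1(G)=\{2,3,5,7,11,13,17,19,23,29,31,47\}$ and three singleton components.

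The verification is finite and purely mechanical, so the main thing to guard against is transcription error in the (lengthy) spectrum of $M$. I would mitigate this by recomputing $\omega(G)$ in \textsf{GAP} from the stored character table of $M$, and, as an independent check on the sparsely connected vertices, by using the Frobenius structure of the relevant Sylow normalizers: since $N_M(\langle x\rangle)$ equals $41{:}40$, $59{:}29$, or $71{:}35$ for $x$ of order $41$, $59$, $71$ respectively, the centralizer of such an $x$ is just $\langle x\rangle$, forcing each of these three primes to be isolated and confirming the three singleton components.
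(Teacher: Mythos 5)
Your proposal is correct and matches the paper's approach: the paper offers no argument beyond the citation of the electronic ATLAS~\cite{EAtlas}, and your mechanical verification of the $\binom{15}{2}$ pairs against $\omega(M)$ (whose maximal element order is indeed $119$) is exactly the computation that citation encodes. Your supplementary check via the Frobenius normalizers $41{:}40$, $59{:}29$, $71{:}35$ is sound and is in fact the same structural data the paper itself invokes later, in the proof of Theorem~\ref{RecIsomType}.
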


\begin{table}[htbp]
\centerline{\begin{tabular}{|p{20mm}|p{20mm}|p{45mm}|}
\hline
$\mbox{Vertex }x$ & $\mbox{Degree}$ & $\mbox{Neighbors of }x \mbox{ in }\Gamma(M)$\\ \hline

 $2$&  $10$ & $3$, $5$, $7$, $11$, $13$, $17$, $19$, $23$, $31$, $47$\\ \hline

 $3$ &  $10$ & $2$, $5$, $7$, $11$, $13$, $17$, $19$, $23$, $29$, $31$\\ \hline

 $5$ &  $5$ & $2$, $3$, $7$, $11$, $19$\\ \hline

 $7$ & $4$ & $2$, $3$, $5$, $17$ \\ \hline

 $11$ & $3$ & $2$, $3$, $5$\\ \hline

 $13$ & $2$ & $2$, $3$ \\ \hline

 $17$ & $3$ & $2$, $3$, $7$ \\ \hline

 $19$ & $3$ & $2$, $3$, $5$\\ \hline

 $23$ & $2$ & $2$, $3$ \\ \hline

 $29$ & $1$ & $3$\\ \hline

 $31$ & $2$ & $2$, $3$ \\ \hline

 $41$ & $0$ & \\ \hline

 $47$ & $1$ & $2$\\ \hline

 $59$ & $0$ & \\ \hline

 $71$& $0$ & \\ \hline

\end{tabular}

}
\caption{\label{table3}Adjacency of vertices in $\Gamma(M)$}
\end{table}

\begin{lemma}{\rm (see \cite[Lemma~14]{Maslova_Pagon})}\label{K(1,5)}
If $G$ is a group such that $\Gamma(G)$ is a bipartite graph with parts of sizes $1$ and $5$, then $\pi(G)=\{2,3,7,13,19,37\}$ and $G/O_2(G) \cong {^2}G_2(27)$.
\end{lemma}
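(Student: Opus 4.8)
The plan is to exploit the extreme sparseness of a bipartite prime graph to force $G$ down to an almost simple group, and then to identify its socle as ${}^2G_2(27)$.

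First I set up notation and dispose of the easy cases. Write $\pi(G)=\{a\}\cup\rho$, where $\rho$ is the independent set of size $5$ and every edge of $\Gamma(G)$ is incident with the single vertex $a$; in particular $\rho$ is a $5$-coclique, so $t(G)\ge 5$. I claim $G$ is non-solvable: if it were solvable, then for any three pairwise non-adjacent primes $p,q,r$ a Hall $\{p,q,r\}$-subgroup would be a solvable group whose prime graph has three connected components, contradicting the Gruenberg--Kegel theorem (Lemma~\ref{Gruenberg--Kegel theorem}), which permits at most two components for a solvable group; but $\rho$ supplies such a triple. By the Feit--Thompson theorem $2\in\pi(G)$. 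I also note that $\Gamma(G)$ has an edge, since an edgeless graph on six vertices would make every element of $G$ of prime power order, which is impossible because such groups have at most four prime divisors.

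Next I reduce to the almost simple case. Since $\Gamma(G)$ is bipartite it is triangle-free, and hence so is the prime graph of every section of $G$ (for $K\unlhd H\le G$ one has $\omega(H/K)\subseteq\omega(H)\subseteq\omega(G)$, so $\Gamma(H/K)\subseteq\Gamma(G)$). Let $\overline G=G/S(G)$ and let $N=T_1\times\cdots\times T_m$ be its socle, a product of nonabelian simple groups. If $m\ge 2$, then, because $2$ divides the order of every $T_i$ while each $|\pi(T_i)|\ge 3$, I may choose odd primes $p\in\pi(T_1)$ and $q\in\pi(T_2)$ with $p\ne q$; placing suitable elements in the factors $T_1$ and $T_2$ yields elements of orders $pq$, $2p$ and $2q$, so $\{2,p,q\}$ is a triangle in $\Gamma(G)$ --- a contradiction. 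Hence $m=1$, $\overline G$ is almost simple, and its socle $S$ is simple with $S\unlhd\overline G\le\Aut(S)$. From $\Gamma(S)\subseteq\Gamma(G)$ I get $\pi(S)\subseteq\pi(G)$, so $|\pi(S)|\le 6$, and $\Gamma(S)$ is triangle-free with all its edges through a single vertex. When $t(2,G)\ge 2$, Lemma~\ref{VasAnd} further shows that at most one prime of $\rho$ lies outside $\pi(S)$, whence $t(S)\ge 4$; the only situation with $t(2,G)=1$ is $\Gamma(G)=K_{1,5}$, which is carried along in parallel.

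The crux is then to classify the nonabelian simple groups $S$ with triangle-free, star-shaped prime graph and $|\pi(S)|\le 6$, and to decide which can be the socle of such a $G$. Triangle-freeness is very restrictive: for a group of Lie type every cyclic maximal torus forces a clique, so the factors $q^i-1$ (resp.\ $q^{2i}-1$) of $|S|$ must each involve very few primes; Lemma~\ref{LieType} then bounds the characteristic, the field degree and the Lie rank in terms of $|\pi(S)|\le 6$, leaving only finitely many candidates, which are tested with the adjacency criteria of Vasil'ev--Vdovin (Lemma~\ref{AdjCritE8} and its analogues). Alternating and sporadic groups are eliminated from their known prime graphs (e.g.\ $J_1$ fails because $\{2,3,5\}$ is a triangle), and the exceptional groups $E_8(q)$ and the like are excluded by size. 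For the remaining small socles with $2$ isolated in $\Gamma(S)$ --- the Suzuki groups and $\PSL_3(4)$ --- any completion to six primes fails: a nontrivial normal subgroup contributing a new prime $r$ carries an involution with nonzero fixed points (not every involution can invert the whole module, else $S$ would act through an order-$2$ scalar and lose simplicity), producing an edge $2$--$r$ independent of an already present edge, while field automorphisms merely add more edges; either way one obtains two disjoint edges, contradicting the star shape. The unique survivor is $S\cong{}^2G_2(27)$: with $q=27$ one has $q-1=2\cdot 13$, $q+1=2^2\cdot 7$, $q+\sqrt{3q}+1=37$ and $q-\sqrt{3q}+1=19$, each carrying a single odd prime, so $\Gamma({}^2G_2(27))$ is precisely the star at $2$ with leaves $3,7,13$ together with the isolated vertices $19,37$; every other Ree group already has more than six prime divisors. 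This classification is the main obstacle of the proof.

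Finally I assemble the conclusion. Since $\pi(S)=\{2,3,7,13,19,37\}$ and $|\pi(G)|=6=|\pi(S)|$ with $\pi(S)\subseteq\pi(G)$, we get $\pi(G)=\{2,3,7,13,19,37\}$; as $\Gamma(G)\supseteq\Gamma(S)$ makes $2$ a vertex of degree at least three, $2$ cannot lie in a $5$-coclique, so $a=2$. A field automorphism of order $3$ would centralise a subfield subgroup ${}^2G_2(3)'\cong\PSL_2(8)$ and hence produce an element of order $3\cdot 7$, giving the triangle $\{2,3,7\}$ (the edges $2$--$3$ and $2$--$7$ being already present) and destroying bipartiteness; therefore $\overline G=S$, i.e.\ $G/S(G)\cong{}^2G_2(27)$. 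Lastly, by Lemma~\ref{VasAnd} neither $19$ nor $37$ divides $|S(G)|$ and at most one of $3,7,13$ does; a coprime-action argument then shows that a nontrivial normal subgroup of odd order would, through a fixed point of a suitable element of prime order in $\rho$, create an edge joining two members of the coclique $\rho$, which is impossible. Hence $S(G)$ is a $2$-group, that is $S(G)=O_2(G)$, and $G/O_2(G)\cong{}^2G_2(27)$, as required.
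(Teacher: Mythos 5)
First, a point of comparison: the paper does not prove this lemma at all --- it is imported verbatim from \cite[Lemma~14]{Maslova_Pagon}, whose proof ultimately rests on the classification of finite groups whose Gruenberg--Kegel graphs contain no triangles (Alekseeva--Kondrat'ev, see \cite{AKondTrianFree2}). So your attempt is a reconstruction of that cited result, and it must stand on its own. Your opening reductions do: the Hall-subgroup argument for non-solvability, Feit--Thompson, the triangle in $T_1\times T_2$ forcing the socle of $G/S(G)$ to be a single simple group $S$, and the use of Lemma~\ref{VasAnd}(2) to push four of the five coclique primes into $\pi(S)$ are all correct. The problem is that everything after that --- which is where the lemma actually lives --- is asserted rather than proved, and the one elimination argument you do spell out fails for exactly the groups you apply it to.

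Concretely: $\Sz(8)$, $\Sz(32)$ and $\PSL_3(4)$ have \emph{edgeless} prime graphs, so there is no ``already present edge'' for your new edge $2$--$r$ to be disjoint from; a star centred at $2$ (or at $r$) is perfectly compatible with what your involution argument produces, and no contradiction follows. Eliminating these socles genuinely requires other tools: the counting of Lemma~\ref{VasAnd}(2) (already fatal for $\Sz(8)$ when the star is centred at $2$, since four odd coclique primes would have to lie in $\pi(S)$, which has only three), Frobenius subgroups of $S$ via Lemma~\ref{OrdersInExt} (e.g.\ the Borel subgroup $2^{3+3}{:}7$ of $\Sz(8)$ forces an edge $7$--$r$ between two \emph{odd} primes), and explicit centralizers of field automorphisms (e.g.\ $\Sz(8){:}3$ contains elements of order $15$); the remark that ``field automorphisms merely add more edges'' is not an argument, because the location of those edges is precisely what yields the contradiction. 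Second, your candidate list is wrong: $M_{22}$ has prime graph a single edge $2$--$3$ plus isolated vertices $5,7,11$, hence star-shaped and triangle-free with independence number $4$, so sporadic groups are \emph{not} all ``eliminated from their known prime graphs''; $M_{22}$ must be excluded by the same kind of extension analysis (e.g.\ via the Frobenius subgroup $11{:}5$ of $\PSL_2(11)\le M_{22}$). Third, the $K_{1,5}$ case is never actually handled: ``carried along in parallel'' is not a proof, and the bound $t(S)\ge 4$ used to prune candidates is unavailable there, since Lemma~\ref{VasAnd} requires $t(2,G)\ge 2$. Finally, the closing ``coprime-action argument'' showing that $S(G)$ is a $2$-group is again only asserted; the delicate case is a normal $3$-subgroup, for which elements of order $6$ give nothing, and one needs, for instance, Lemma~\ref{OrdersInExt} applied to the Frobenius group $2^3{:}7$ inside $\PSL_2(8)\cong{}^2G_2(3)'\le{}^2G_2(27)$ to manufacture the forbidden edge $3$--$7$ inside the coclique. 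In short, the skeleton is plausible, but each step that makes the lemma true is either missing or incorrect as stated.
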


\begin{lemma}{\rm (see \cite[Theorem~A]{Zavarnitsine_2006})}\label{Rec2G2}
Groups ${^2}G_2(3^{2m+1})$  for $m \ge 1$ are recognizable by Gruenberg--Kegel graph.
\end{lemma}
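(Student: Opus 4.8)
The plan is to exploit the disconnectedness of $\Gamma(S)$, where $S={^2}G_2(q)$ with $q=3^{2m+1}$ and $m>1$. First I would record the shape of $\Gamma(S)$. Putting $u=3^{m+1}$, so that $u^2=3q$, one has $|S|=q^3(q-1)(q+1)(q-u+1)(q+u+1)$, using $q^3+1=(q+1)(q^2-q+1)$ and $q^2-q+1=(q-u+1)(q+u+1)$. I would check that $\Gamma(S)$ has exactly three connected components,
\[\pi_1=\pi\bigl(q(q^2-1)\bigr),\quad \pi_2=\pi(q-u+1),\quad \pi_3=\pi(q+u+1),\]
with $2,3\in\pi_1$ and with $\pi_2,\pi_3$ consisting of odd primes coprime to $6(q^2-1)$: indeed $q\pm u+1$ is odd and $\equiv 1\pmod 3$, $\gcd(q-u+1,q+u+1)\mid 2u$, and both factors exceed $1$ for $m>1$, so $\pi_2,\pi_3$ are nonempty and disjoint. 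Thus $s(S)=3$, and $S$ does not occur in Table~\ref{table}.

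Now let $H$ be any group with $\Gamma(H)=\Gamma(S)$, so $s(H)=3$. Since the Gruenberg--Kegel graph of a Frobenius or $2$-Frobenius group has exactly two connected components, Lemma~\ref{Gruenberg--Kegel theorem} forces the third alternative: there is a nilpotent normal $\pi_1$-subgroup $N\unlhd H$ with $A:=H/N$ satisfying $S_0\unlhd A\le\Aut(S_0)$ for a nonabelian simple group $S_0$, with $s(S_0)\ge s(H)=3$, and $A/S_0$ a $\pi_1$-group. As $N$ and $A/S_0$ are $\pi_1$-groups, every prime of $\pi_2\cup\pi_3$ divides $|S_0|$, and the induced subgraph on $\pi_2\cup\pi_3$ is the same in $\Gamma(S_0)$ and in $\Gamma(H)$; hence $\pi_2,\pi_3$ are unions of connected components of $\Gamma(S_0)$. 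Since $H$ is nonsolvable and $t(2,H)\ge 2$, I would also invoke Lemma~\ref{VasAnd}(3): as $S_0$ will turn out not to be $A_7$ or $\PSL_2(q')$ for odd $q'$, case~(b) is impossible, so case~(a) gives $\pi(S(H))\subseteq\pi_1$ and $t(2,S_0)\ge t(2,H)$.

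The heart of the proof, and the step I expect to be the main obstacle, is to show $S_0\cong{^2}G_2(q)$. I would run through the finite simple groups whose prime graph has at least three components, matching the two odd components $\pi_2,\pi_3$. For $S_0$ of Lie type, Lemma~\ref{LieType} bounds the characteristic, the field, and (for classical $S_0$) the rank, reducing to finitely many families; Lemma~\ref{Zhigmondy} realises the primes of the odd components as primitive prime divisors with prescribed values of $e(\cdot,\cdot)$; and Lemma~\ref{Gerono} disposes of the sporadic coincidences of prime powers. The target is to show that only $S_0={^2}G_2(q')$ yields two coprime odd components of the Ree shape $q'\mp\sqrt{3q'}+1$ sitting over a first component $\pi\bigl(q'(q'^2-1)\bigr)$ of characteristic $3$; then comparing $\pi_1$ with that of $S$ and applying Zsigmondy pins down $q'=q$. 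I expect this classification-and-arithmetic sieve to be the longest and most delicate part of the argument.

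It remains to prove $N=1$ and $A=S_0$, which by Theorem~\ref{UnrecGK} is precisely the assertion that no group with nontrivial solvable radical realises $\Gamma(S)$, i.e.\ the crux of recognizability. Suppose $N\ne 1$ and let $P=O_p(H)\ne 1$ for some $p\in\pi_1$. A Hall $\pi_2$-subgroup $Q$ of $H$ acts coprimely on $P$; if some nontrivial element of $Q$ fixed a nontrivial element of $P$, then $\omega(H)$ would contain $pr$ for some $r\in\pi_2$, joining $\pi_1$ to $\pi_2$, which is impossible. Hence $Q$ (and likewise a Hall $\pi_3$-subgroup) acts fixed-point-freely on $P$ and is a Frobenius complement. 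The remaining task is to contradict the simultaneous fixed-point-free action on $N$ of the two Ree tori together with the order-$6$ element of $S_0$ normalising each of them, by reducing to a chief factor and showing that a full Ree torus is too large to act fixed-point-freely on a nontrivial $\GF(p)$-module of ${^2}G_2(q)$ (using the low-dimensional modular representation theory of ${^2}G_2(q)$ and Lemma~\ref{Zhigmondy}); this fixed-point analysis, establishing $N=1$, is the second principal obstacle. A final comparison of $\Gamma(H)$ with $\Gamma(S_0)$ then rules out any nontrivial field automorphism in $A/S_0$ (these would create edges absent from $\Gamma(S)$), giving $A=S_0$ and hence $H\cong{^2}G_2(q)=S$.
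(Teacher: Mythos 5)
First, a point of comparison: the paper does not prove Lemma~\ref{Rec2G2} at all --- it is quoted as a known result, \cite[Theorem~A]{Zavarnitsine_2006}, and used as a black box in the proof of Theorem~\ref{RecIsomType}. So your attempt has to be judged as a reconstruction of Zavarnitsine's proof, and as such it is a strategy outline with the two decisive steps missing, not a proof. Your set-up is correct and matches the standard approach: the three components of $\Gamma({}^2G_2(q))$ with $\pi_1=\pi(q(q^2-1))$, $\pi_2=\pi(q-u+1)$, $\pi_3=\pi(q+u+1)$; the fact that Frobenius and $2$-Frobenius groups have exactly two components; and the reduction via Lemma~\ref{Gruenberg--Kegel theorem} to $H$ being a nilpotent $\pi_1$-group $N$ extended by an almost simple group with socle $S_0$ satisfying $s(S_0)\ge 3$.

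Everything after that, however, is a statement of intent rather than an argument, and the content of the theorem lives exactly there. (i) The identification $S_0\cong{}^2G_2(q)$ requires running through the complete list of simple groups with at least three prime-graph components and matching the two coprime odd components arithmetically; Table~\ref{table} only covers $s\ge 4$, and the $s=3$ list (alternating groups, $\PSL_2(q')$, many classical and exceptional families, sporadic groups) is long. You name the tools (Lemmas~\ref{LieType}, \ref{Zhigmondy}, \ref{Gerono}) but explicitly defer the sieve, which is the bulk of the work. (ii) Your mechanism for forcing $N=1$ is flawed as stated: ``a full Ree torus is too large to act fixed-point-freely on a nontrivial $\GF(p)$-module'' is not a true principle, since a cyclic group of arbitrary order acts fixed-point-freely on suitable modules (it is a Frobenius complement). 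The genuine obstruction must be a fixed-point theorem for the \emph{whole group} ${}^2G_2(q)$ acting on modules --- in the spirit of Lemma~\ref{Unisingular} or Lemma~\ref{3D4}, or of Mazurov's Lemma~\ref{OrdersInExt} applied to Frobenius subgroups $T{:}6$ of the Ree group --- proved by Brauer-character or eigenvalue arguments; that representation-theoretic input is the technical core of Zavarnitsine's paper and is absent here. A smaller, fixable slip: $H$ is non-solvable, so Hall $\pi_2$-subgroups need not exist; you should instead take the preimage in $H$ of a torus of $S_0$ and apply Schur--Zassenhaus to get the coprime action you want. Finally, note that nothing in your outline uses the hypothesis $m>1$, which is harmless since the cited theorem actually covers all $q=3^{2m+1}>3$.
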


\begin{lemma}{\rm (see \cite[Lemma 1]{Mazurov_1997_1})}\label{OrdersInExt} Let $G$ be a group, $N$ its normal subgroup such that $G/N$
a Frobenius group with kernel $F$ and cyclic complement $C$. If $(|F|,|N|) = 1$
and $F \not \le NC_G(N)/N$, then $s\cdot|C| \in  \omega(G)$ for any $s \in \pi(N)$.

\end{lemma}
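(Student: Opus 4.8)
The plan is to strip off the kernel $F$ using coprimality, push the problem down to a single chief factor, and then produce the required element by a fixed-point count inside the resulting Frobenius action.

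First I would set $H=\pi^{-1}(F)$ for the canonical projection $\pi\colon G\to G/N$; then $H\trianglelefteq G$ with $H/N\cong F$, and by Schur--Zassenhaus (using $(|F|,|N|)=1$) there is a complement $M\cong F$ to $N$ in $H$, acting coprimely on $N$. Rewriting the hypothesis $F\not\le NC_G(N)/N$ by means of $N\le H$ and the modular law shows it is equivalent to $[N,M]\ne 1$; so the real content of the assumptions is that $F$, realised concretely as $M$, acts nontrivially and coprimely on $N$, while the cyclic complement $C$ acts fixed-point-freely on $M\cong F$. Since element orders can only grow under passage to preimages — if $\bar x\in G/L$ has order $m$, then any preimage has order $mk$ for some $k$ and its $k$-th power has order exactly $m$ — it suffices to exhibit an element of order $s|C|$ in a suitable section of $G$. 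I would therefore pass to a $G$-chief factor $V=A/B$ (with $B\le A\le N$) that is an elementary abelian $s$-group on which $F$ acts nontrivially, and regard $V$ as a nonzero $\mathbb{F}_s\overline{G}$-module, where $\overline{G}=F\rtimes C$, with $F$ acting nontrivially and coprimely (note $s\nmid|F|$ since $(|F|,|N|)=1$).

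The crux is then to show that some conjugate complement $C^{f}$ ($f\in F$) fixes a nonzero vector of $V$. I would establish this by summing the dimensions of fixed spaces over all complements. Using that the nontrivial elements of $\overline{G}$ split as $F\setminus\{1\}$ together with the nontrivial elements of the pairwise trivially intersecting complements $\{C^{f}:f\in F\}$, the averaging (Reynolds) formula for invariants gives
\[
\sum_{f\in F}\dim_{\mathbb{F}_s}C_V(C^{f})=\frac{|F|}{|C|}\,\dim_{\mathbb{F}_s}[V,F]+|F|\,\dim_{\mathbb{F}_s}C_V(\overline{G}).
\]
Because $|C|$ divides $|F|-1$ we have $|F|>|C|$, and because $F$ acts nontrivially and coprimely we have $[V,F]\ne 0$ (using $V=C_V(F)\oplus[V,F]$); hence the right-hand side is strictly positive and some complement $C^{f}=\langle y\rangle$ fixes a nonzero $v\in V$. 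Taking $v$ of order $s$ and $y$ of order $|C|$, the two commute, and when $(s,|C|)=1$ the product $vy$ has order $s|C|$, which lifts to an element of order $s|C|$ in $G$ by the preimage remark above.

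I expect the fixed-point/assembly step to be the main obstacle, and it is exactly where the Frobenius structure and the nontriviality of the action are indispensable. Two points require genuine care. First, the displayed identity is an ordinary-character computation, valid as written only when $s\nmid|C|$; for $s\mid|C|$ one must rerun the count with Brauer characters and then recover an element of order exactly $s|C|$ from the (possibly non-split) extension $\pi^{-1}(C^{f})$ over its $s$-part, since now $\langle v\rangle\times\langle y\rangle$ only yields order $\operatorname{lcm}(s,|C|)=|C|$. Second, the reduction to a chief factor only supplies a factor on which $F$ acts nontrivially when $F$ fails to centralise the Sylow $s$-subgroup of $N$; securing this for the prescribed prime $s$ is the most delicate point, and it is precisely here that the fixed-point-free action of the complement on the kernel must be brought to bear on the action of the kernel on $N$.
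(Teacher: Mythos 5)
The paper contains no proof of this lemma: it is quoted from Mazurov \cite[Lemma~1]{Mazurov_1997_1}. Measured against the standard argument, your plan follows the right general route (Schur--Zassenhaus complement $M$, translation of the hypothesis into $[N,M]\ne 1$ --- your coprimality argument for that equivalence is correct, since $M\le NC_G(N)$ forces $M/C_M(N)$ to embed in $\mathrm{Inn}(N)$, which has order coprime to $|M|$ --- then reduction to an elementary abelian section and a fixed-point count over the conjugates of $C$), and your displayed counting identity is correct when $s$ does not divide $|C|$. But the two points you flag at the end are genuine unresolved gaps, and the second one cannot be closed, because the statement as quoted here, with ``for \emph{any} $s\in\pi(N)$'', is false. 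Take $K=\Gamma L(1,8)\cong 7{:}3$, acting naturally (semilinearly) on $A=\GF(8)^+\cong 2^3$, and acting on $B=\GF(25)^+\cong 5^2$ through its quotient $C\cong\mathbb{Z}_3$, a generator acting as multiplication by a primitive cube root of unity $\lambda\in\GF(25)$. Put $N=A\times B$ and $G=N\rtimes K$. Then $G/N\cong 7{:}3$ is Frobenius with kernel $F$ of order $7$ and cyclic complement $C$ of order $3$; $(|F|,|N|)=(7,200)=1$; and $C_G(N)=N$ because $K$ acts faithfully on $A$, so $F\not\le NC_G(N)/N$. Yet $5\cdot|C|=15\notin\omega(G)$: an element of order $15$ would have image of order $1$ or $3$ in $G/N$ (as $\omega(7{:}3)=\{1,3,7\}$), hence would lie, up to conjugacy, in $N$ (whose element orders are $1,2,5,10$) or in $N\rtimes C$; and for $x=(a,b)c^{\pm1}\in(N\rtimes C)\setminus N$ one computes
\[
x^3=\bigl(\mathrm{Tr}_{\GF(8)/\GF(2)}(a),\,(1+\lambda+\lambda^2)b\bigr)=(\mathrm{Tr}(a),0),
\]
of order at most $2$, so $|x|\in\{3,6\}$. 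Altogether $\omega(G)=\{1,2,3,5,6,7,10,35\}$, which contains $2|C|=6$ but not $5|C|=15$, although $5\in\pi(N)$. This is exactly your ``most delicate point'': here $F$ centralizes the Sylow $5$-subgroup of $N$, and no $5$-chief factor with nontrivial $F$-action exists.

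What is true (and is what \cite{Mazurov_1997_1} asserts, and all that the applications in Section~5 require once one factors out $O_{p'}(F(H))$ so that $N$ becomes a $p$-group) is the conclusion for \emph{some} $s\in\pi(N)$; for that version your method does work, choosing the chief factor at a prime where $M$ acts nontrivially. Even then two repairs are needed. First, your case $s\mid|C|$ is better handled without Brauer characters: since $(|F|,|C|)=1$ and $C_F(c)=1$ for $1\ne c\in C$, the Brauer--Glauberman permutation lemma shows $C$ permutes the nontrivial $F$-isotypic components of $V$ freely, so the norm map $v\mapsto\sum_{i=0}^{|C|-1}c^iv$ is nonzero on $V$, and any $v$ with nonzero norm makes $vc$ an element of order exactly $s|C|$, uniformly in $s$. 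Second, a $G$-chief factor $A/B$ inside $N$ is an $(F\rtimes C)$-module only if $N$ centralizes it (automatic for nilpotent $N$, not in general), and realizing $V\rtimes(F\rtimes C)$ inside $\omega(G)$ needs either a Frattini argument with $M$-invariant Sylow subgroups or an appeal to Lemma~\ref{SpecSemDirProd}; neither step is addressed in your sketch.
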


\begin{lemma}{\rm (see \cite[Lemma~10]{Zavarn_Mazurov})}\label{SpecSemDirProd} Let $V$ be a normal elementary abelian subgroup of a
group $G$. Put $H = G/V$ and denote by $G_1 = V \rtimes H$ the natural semidirect product. Then $\omega(G_1) \subseteq \omega(G)$.
\end{lemma}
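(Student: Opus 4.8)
The plan is to analyze element orders in $G_1=V\rtimes H$ directly and to match each one with an element of the same order in $G$. Write the elementary abelian $p$-group $V$ additively as a vector space over the field of $p$ elements; since $V$ is abelian and normal in $G$, conjugation gives a well-defined action of $H=G/V$ on $V$, and this is precisely the action defining the natural semidirect product $G_1$. A typical element of $G_1$ is a pair $(v,h)$ with $v\in V$ and $h\in H$, and an easy induction shows
\[(v,h)^k=\left(\sum_{i=0}^{k-1}h^i\cdot v,\ h^k\right).\]
Setting $n=|h|$ and writing $N_h=\sum_{i=0}^{n-1}h^i$ for the corresponding norm operator on $V$, we get $(v,h)^n=(N_h(v),1)$; since $V$ has exponent $p$, this shows that the order of $(v,h)$ equals $n$ if $N_h(v)=0$ and equals $np$ if $N_h(v)\neq 0$. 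So it suffices to realize each of these two kinds of order in $G$.

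For orders of the first kind I would use the elementary fact that $\omega(G/N)\subseteq\omega(G)$ for any normal subgroup $N$: a preimage of an element of order $n$ has order some multiple $kn$ of $n$, and its $k$-th power then has order exactly $n$. Applying this with $N=V$ gives $\omega(H)=\omega(G/V)\subseteq\omega(G)$, which covers every order of the form $n=|h|$ arising in $G_1$.

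The substantive case is order $np$, which occurs exactly when $N_h(v)\neq 0$ for some $v$, i.e.\ when $N_h(V)\neq 0$. Here I would fix a preimage $g\in G$ of $h$; since $h^n=1$ we have $g^n\in V$, and a computation identical to the one above, now carried out in $G$ using $g^i v' g^{-i}=h^i\cdot v'$, gives $(v'g)^n=N_h(v')+g^n$ for every $v'\in V$. As $v'$ ranges over $V$ the right-hand side ranges over the coset $N_h(V)+g^n$, which has cardinality $|N_h(V)|\ge p\ge 2$ and therefore contains a nonzero element $w$. For the corresponding $v'$ the element $v'g$ satisfies $(v'g)^n=w\neq 1$ with $w$ of order $p$, which forces $v'g$ to have order exactly $np$; hence $np\in\omega(G)$.

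I expect the only real pitfall to be the temptation to realize every element order inside a single coset $gV$. This works for order $np$ but fails for order $n$ when $p\mid n$: for instance with $V=\mathbb{Z}/2$, $H=\mathbb{Z}/2$ acting trivially, and $G=\mathbb{Z}/4$, the nontrivial coset consists entirely of elements of order $4$, so the order-$2$ element of $G$ is not of the form $v'g$. The clean way around this, as above, is to obtain the orders $n$ from the quotient inclusion $\omega(G/V)\subseteq\omega(G)$ and to reserve the explicit coset computation $(v'g)^n=N_h(v')+g^n$ for the orders $np$.
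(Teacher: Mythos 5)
Your proof is correct, but there is nothing in the paper to compare it against: the lemma is imported by citation from Zavarnitsine--Mazurov \cite[Lemma~10]{Zavarn_Mazurov}, and the paper gives no argument of its own. Your write-up is a sound, self-contained substitute. The key points all check out: the dichotomy that an element $(v,h)$ of $G_1$ has order $n=|h|$ or $np$ according to whether the norm $N_h(v)$ vanishes (using that the order is a multiple of $n$ and divides $np$, so it is one of these two); the reduction of the order-$n$ case to the standard inclusion $\omega(G/V)\subseteq\omega(G)$; and the coset computation $(v'g)^n=N_h(v')+g^n$ in $G$, where the conjugation action of a preimage $g$ agrees with the action of $h$ defining $G_1$, so that a coset of the nonzero subspace $N_h(V)$ must contain a nonzero vector $w$, forcing $v'g$ to have order exactly $np$. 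Your closing observation is also apt and is the one genuinely delicate point: one cannot hope to realize every order of $G_1$ inside the single coset $gV$ (your example $G=\mathbb{Z}/4\mathbb{Z}$, $V\cong\mathbb{Z}/2\mathbb{Z}$ shows this for order $n$ when $p\mid n$), which is precisely why the two kinds of orders must be handled by different mechanisms, as you do.
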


Let $S$ be a simple group of Lie type in characteristic $p$. Let $A$ be any abelian $p$-group with an $S$-action.
Any element $s \in S$ is said to be {\it unisingular} on $A$ if $s$ has a (non-zero) fixed point on $A$.
$S$ is said to be {\it unisingular} if every element $s \in S$ acts unisingularly on every finite
abelian $p$-group $A$ with an $S$-action.

\begin{lemma}{\rm (see \cite[Theorem~1.3]{Guralnick_Tiep})}\label{Unisingular} If $G =E_8(q)$ with q arbitrary, then $G$ is unisingular.
\end{lemma}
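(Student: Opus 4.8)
The statement is Theorem~1.3 of Guralnick and Tiep, so rather than a self-contained argument I sketch the representation-theoretic route one would follow to prove it from scratch. The plan is to peel off several reductions until the problem becomes an assertion about weights of irreducible modules of the algebraic group $\mathbf{G}=E_8(\overline{\mathbb{F}}_p)$. First I would reduce the finite abelian $p$-group $A$ to an elementary abelian one: if $s$ fixes a nonzero $a\in A$, then $p^k a$ is a nonzero $s$-fixed element of the socle $A[p]=\{a\in A: pa=0\}$ for a suitable $k$, and conversely; hence $C_A(s)\ne 0$ if and only if $C_{A[p]}(s)\ne 0$, and $A[p]$ is an $\mathbb{F}_pS$-module. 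Working over $\overline{\mathbb{F}}_p$, the element $s$ has a nonzero fixed vector on an $\overline{\mathbb{F}}_pS$-module $V$ exactly when $\det(s-1\mid V)=0$, i.e.\ when $1$ is an eigenvalue of $s$ on $V$. Since $\det(s-1\mid V)$ is multiplicative along a composition series, $s$ has a fixed point on $V$ as soon as it has one on a single composition factor, so it suffices to treat irreducible modules.

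For $S=E_8(q)$ with $q=p^l$ (recall $E_8$ is both simply connected and adjoint, so $S$ is simple and its defining-characteristic irreducibles are restrictions of algebraic-group modules), the Steinberg tensor product theorem writes each irreducible as a tensor product of Frobenius twists of restricted simple modules $L(\lambda)$. The property ``$s$ has eigenvalue $1$'' is inherited by Frobenius twists, since raising an eigenvalue equal to $1$ to a $p$-power leaves it $1$, and by tensor products, since a tensor of fixed vectors is fixed. Hence it is enough to show that every $s\in S$ has eigenvalue $1$ on each restricted $L(\lambda)$.

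Next I would invoke the Jordan decomposition $s=tu=ut$ with $t$ semisimple (a $p'$-element) and $u$ unipotent. Decomposing $L(\lambda)$ into the eigenspaces of $t$ and noting that $u$ preserves each of them and acts unipotently there (hence with sole eigenvalue $1$, as $u$ is a $p$-element in characteristic $p$), the eigenvalues of $s$ coincide with those of $t$. So it suffices to treat semisimple $t$. Embedding $t$ in an $F$-stable maximal torus $T$, its eigenvalues on $L(\lambda)$ are the values $\mu(t)$ as $\mu$ runs over the weights of $L(\lambda)$, each with multiplicity $\dim L(\lambda)_\mu$. The zero weight $\mu=0$ gives $\mu(t)=1$ for every $t$; thus the whole theorem reduces to the single assertion that the zero weight space $L(\lambda)_0$ is nonzero for every restricted dominant weight $\lambda$.

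The hard part is precisely this last assertion, and it is where the special arithmetic of $E_8$ enters. Because $E_8$ has trivial fundamental group, its weight lattice coincides with its root lattice, so every dominant weight $\lambda$ lies in $\mathbb{Z}\Phi$; consequently $0$ is dominated by $\lambda$ in the weight poset and the Weyl module $V(\lambda)$ already has a nonzero zero weight space. The genuine difficulty is passing to the simple quotient $L(\lambda)$ in small characteristic. For good primes ($p\ge 7$ for $E_8$) I would apply Premet's theorem, which guarantees that the set of weights of $L(\lambda)$ is saturated and hence coincides, as a set, with that of $V(\lambda)$, so $0$ survives as a weight. The real obstacle is the bad primes $p\in\{2,3,5\}$, where $L(\lambda)_0$ can a priori vanish; there one must argue module by module, using explicit information on the small restricted representations (for instance Lübeck's tables) together with the combinatorics of restricted weights, either to confirm $L(\lambda)_0\ne 0$ directly or, for the few genuinely exceptional modules, to show by a separate torus computation that every semisimple element still attains the eigenvalue $1$ through nonzero weights. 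Assembling the good-prime case with this finite bad-prime analysis yields unisingularity of $E_8(q)$ for all $q$.
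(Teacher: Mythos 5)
The paper itself contains no proof of this lemma: it is imported verbatim from Guralnick and Tiep \cite[Theorem~1.3]{Guralnick_Tiep} and used as a black box in the proof of Theorem~\ref{RecIsomType}. So the only thing to assess is whether your sketch is a faithful reconstruction of the cited argument. In outline it is: the reduction from a finite abelian $p$-group $A$ to its socle $A[p]$, then to composition factors over $\overline{\mathbb{F}}_p$, then via Steinberg's tensor product theorem to restricted modules, then via Jordan decomposition to semisimple elements, and finally to the nonvanishing of the zero weight space $L(\lambda)_0$, is exactly the standard route, and the structural fact driving everything --- that $E_8$ has trivial fundamental group, so every restricted dominant weight lies in the root lattice and dominates $0$ --- is correctly identified as the heart of the matter.

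There is, however, one concrete error, and it sits precisely where you declare the hard work to be. Premet's theorem is not limited to good primes ($p\ge 7$ for $E_8$); its hypothesis excludes only the \emph{special} primes of the root system, namely $p=2$ in types $B_n$, $C_n$, $F_4$ and $p\in\{2,3\}$ in type $G_2$. Since $E_8$ is simply laced, it has no special primes, so Premet's theorem holds in \emph{every} characteristic, including the bad primes $2$, $3$, $5$; hence $L(\lambda)_0\neq 0$ for every restricted $\lambda$ with no case analysis whatsoever. This matters because the fallback you propose for $p\in\{2,3,5\}$ --- checking modules against L\"ubeck's tables plus ad hoc torus computations --- is not a viable substitute: the tables cover only modules of bounded dimension, whereas for $p=5$ there are $5^8$ restricted highest weights to account for, so the bad-prime step as written is a genuine gap in the argument. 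The gap is closed not by more computation but simply by quoting Premet's theorem with its correct hypotheses (non-special, rather than good, primes).
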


\begin{lemma}{\rm (see \cite[Proposition~2]{Zavarnitsine_2013})}\label{3D4} Let $G = {^3}D_4(q)$, where $q$ is a power of a prime $p$.
If $G$ acts on a non-trivial vector space $V$ over a field of characteristic distinct from $p$, then
each element $x \in G$ of order $q^4 - q^2 + 1$ fixes a non-zero vector $v(x) \in V$.
\end{lemma}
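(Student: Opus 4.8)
The plan is to reduce everything to a single irreducible module and then to a statement about the torus containing $x$. Fix a nontrivial $kG$-module $V$ with $\mathrm{char}\,k=\ell\neq p$. Fixed spaces are unchanged under extension of scalars, so I may assume $k$ algebraically closed. It suffices to treat the case where $V$ is irreducible: for general $V$, the socle contains an irreducible submodule $U$, and either $U$ is trivial (so $U^{\langle x\rangle}=U\neq 0$) or $U$ is nontrivial irreducible, and then $U^{\langle x\rangle}\neq 0$ forces $V^{\langle x\rangle}\supseteq U^{\langle x\rangle}\neq 0$. The key structural input is that the element $x$ of order $n=q^4-q^2+1=\Phi_{12}(q)$ generates a cyclic maximal torus $T$ of exactly this order, with $C_G(x)=T$ (so $x$ is regular semisimple), and that $N_G(T)/T$ is cyclic of small order (equal to $4$). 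In particular $\langle x\rangle=T$, so $V^{x}=V^{T}$, and by Frobenius reciprocity $V^{T}\cong\mathrm{Hom}_G\!\big(k[G/T],V\big)$. Thus $x$ fixes a nonzero vector if and only if $V$ is a homomorphic image of the permutation module $k[G/T]$, and the lemma becomes the assertion that \emph{every} nontrivial irreducible $kG$-module occurs in the head of $k[G/T]$.

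In characteristic $0$ this is a clean, finite check. Writing the multiplicity of an irreducible character $\chi$ in the permutation character $1_T^{G}$ as
\[
\langle \chi|_T,1_T\rangle=\tfrac{1}{|T|}\sum_{t\in T}\chi(t),
\]
which is a nonnegative integer, the claim is exactly that this is positive, i.e. nonzero, for every nontrivial $\chi$. Separating the term $t=1$ gives $\chi(1)+\sum_{1\neq t\in T}\chi(t)$; the elements $t\neq 1$ of $T$ are regular semisimple, their values $\chi(t)$ are governed by Deligne--Lusztig theory and are absolutely bounded on each generic family of classes, whereas $\chi(1)$ is a polynomial in $q$ whose leading behaviour dominates $(|T|-1)\max_{t\neq 1}|\chi(t)|$. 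Using the generic character table of ${}^3D_4(q)$ (Deriziotis--Michler) — which involves only finitely many families parametrised by $q$ — one checks that the sum is nonzero for every nontrivial $\chi$ and every $q$, closing the finitely many small-$q$ and medium-degree cases where the dominant-term estimate is not by itself conclusive. Here I would exploit that every prime divisor $r$ of $n$ is a primitive prime divisor of $q^{12}-1$ (Lemma~\ref{Zhigmondy}), hence $r\equiv 1\pmod{12}$, and that the eigenvalue multiset of $x$ on any module is invariant under the action of the small group $N_G(T)/T$ on $\widehat{T}$, which constrains how the non-identity character values can cancel.

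The remaining work is to transfer the characteristic-$0$ statement to cross-characteristic $\ell\neq p$. When $\ell\nmid n=|T|$, the restriction $V|_T$ is semisimple, $\dim V^{T}=\tfrac{1}{n}\sum_{t\in T}\beta_V(t)$ for the Brauer character $\beta_V$, and one relates this to the ordinary computation through the decomposition matrix of ${}^3D_4(q)$, using that $T$ consists of $\ell$-regular elements. When $\ell\mid n$, the prime $\ell=r$ is a primitive prime divisor as above, so the Sylow $\ell$-subgroup of $G$ is cyclic (it lies in $T$) with small inertial quotient; here I would invoke the theory of blocks with cyclic defect (Brauer trees) to control the restriction of modules in the relevant blocks to $T$ and recover a nonzero trivial $T$-constituent. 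Throughout, the self-duality of $k[G/T]$ lets one pass freely between "submodule" and "quotient" and hence between $V$ and its dual.

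The main obstacle is the cross-characteristic case, and within it the case $\ell\mid n$: while the characteristic-$0$ reduction to "$\sum_{t\in T}\chi(t)\neq 0$ for all nontrivial $\chi$" is conceptually transparent and computationally finite, guaranteeing the \emph{positivity of the trivial multiplicity for every irreducible} in the modular setting requires the full modular representation theory of ${}^3D_4(q)$ — the known decomposition matrices together with a careful Brauer-tree analysis — rather than a single uniform estimate. The delicate point is ruling out that the trivial $T$-constituent supplied by a reduced ordinary character is entirely absorbed by composition factors other than the one under consideration.
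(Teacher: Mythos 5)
First, a point of reference: the paper does not prove Lemma~\ref{3D4} at all --- it imports it verbatim from \cite[Proposition~2]{Zavarnitsine_2013}, so your attempt must be judged against that cited source rather than against any argument in the present text. Your structural setup is sound and broadly in the spirit of Zavarnitsine's argument: the reduction to an algebraically closed field and to irreducible $V$ is correct; so is the identification of $\langle x\rangle$ with a cyclic maximal torus $T$ of order $\Phi_{12}(q)=q^4-q^2+1$ with $C_G(x)=T$ and $|N_G(T)/T|=4$; so is the observation that every prime divisor of $q^4-q^2+1$ is a primitive prime divisor of $q^{12}-1$ (the number is coprime to $6$); and the Frobenius-reciprocity translation of the statement into ``every nontrivial irreducible occurs in the head of $k[G/T]$'', with the characteristic-zero form $\sum_{t\in T}\chi(t)>0$ to be checked against the Deriziotis--Michler generic character table, is the right characteristic-zero reduction.

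The genuine gap is that the cross-characteristic heart of the lemma --- precisely what makes it nontrivial and what the citation to \cite{Zavarnitsine_2013} is carrying --- is announced but not proved. Concretely: (i) even when $\ell\nmid|T|$, your plan to transfer positivity ``through the decomposition matrix'' fails as stated, because the decomposition matrix writes \emph{restricted ordinary} characters as nonnegative combinations of irreducible Brauer characters; inverting it expresses an irreducible Brauer character $\beta$ as a $\mathbb{Z}$-linear combination of restricted ordinary characters with \emph{signs of unknown pattern}, so $\sum_{t\in T}\chi(t)>0$ for every ordinary $\chi$ does not imply $\sum_{t\in T}\beta(t)>0$ for every irreducible $\beta$ --- this is exactly the ``absorption'' phenomenon you flag in your last paragraph, and flagging it does not dispose of it; (ii) when $\ell\mid|T|$ the element $x$ is $\ell$-singular, the averaging formula $\dim V^{T}=\frac{1}{|T|}\sum_{t\in T}\beta(t)$ is unavailable, and the promised cyclic-defect/Brauer-tree analysis of the $\Phi_{12}$-blocks of ${}^3D_4(q)$ is invoked but not carried out; (iii) even the characteristic-zero verification is deferred to ``one checks'', and the dominant-term estimate $\chi(1)>(|T|-1)\max_{1\neq t\in T}|\chi(t)|$ is genuinely marginal for small $q$ (note the paper applies the lemma with $q=4$, inside $E_8(2)$), so the finitely many cases are not a formality. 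As it stands the proposal is a credible research program whose decisive modular step is missing; to complete it you would need either to execute the Brauer-tree analysis for the blocks meeting $T$ (the trees for ${}^3D_4(q)$ in the relevant cross characteristics are known) or simply to follow the proof in \cite{Zavarnitsine_2013}, which is what the paper itself does.
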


\section{Proofs of Theorems~\ref{UnrecGK} and~\ref{CoGrAlmSimple}}\label{Rec=>AS}

To prove the main results of this paper, for a given finite set $\pi$ of prime numbers, we need to estimate the number of simple groups $S$ such that $\pi(S)\subseteq\pi$. The following proposition is straightforward, and the main idea of its proof can be found, for example, in~\cite{Mazurov_1994_1} (see the remark after Lemma~2 in~\cite{Mazurov_1994_1}).

\begin{prop}\label{NumSimple} Let $\pi$ be a finite set of primes. The number of pairwise non-isomorphic non-abelian simple groups $S$ with $\pi(S)\subseteq\pi$ is finite, and is at most $O(|\pi|^3)$.
\end{prop}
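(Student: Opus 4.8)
The plan is to invoke the Classification of Finite Simple Groups and treat the three families—sporadic groups, alternating groups, and groups of Lie type—separately, writing $k=|\pi|$ throughout. The sporadic groups contribute at most $26=O(1)$, so they are harmless. For the alternating groups, I would use the fact that for $n\ge 5$ one has $\pi(A_n)=\{p : p\le n\}$, the set of all primes not exceeding $n$ (for every prime $p\le n$ the $p$-adic valuation of $n!/2$ is positive). Hence $\pi(A_n)\subseteq\pi$ forces every prime $\le n$ to lie in $\pi$, so the number of primes below $n$ is at most $k$; by the elementary growth of the prime-counting function this bounds $n$ by $O(k\log k)$, and therefore the number of admissible alternating groups is $O(k\log k)=O(k^3)$.

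The main case, and the only one contributing the cubic term, is the groups of Lie type, and here I would lean directly on Lemma~\ref{LieType}. Write such a group as $G_n(q)$ with $q=p^l$. Lemma~\ref{LieType}(1) gives at most $k$ choices for the characteristic $p$, and Lemma~\ref{LieType}(2) gives at most $k+1$ choices for $l$; hence there are at most $k(k+1)=O(k^2)$ admissible field sizes $q$. There are only boundedly many Lie types. For the finitely many exceptional types the rank is bounded by an absolute constant, so each exceptional type yields $O(k^2)$ groups and all of them together $O(k^2)$. For the classical types, Lemma~\ref{LieType}(4) bounds the rank by $n\le 2k+3=O(k)$; multiplying the $O(k)$ choices of rank by the $O(k^2)$ choices of $q$, and summing over the finitely many classical families, gives $O(k^3)$ groups.

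Combining the three estimates, the total number of non-abelian simple groups $S$ with $\pi(S)\subseteq\pi$ is $O(k^3)+O(k^2)+O(k\log k)+O(1)=O(k^3)$, as claimed; in particular it is finite. I would note that distinct parameter choices may occasionally yield isomorphic groups (exceptional isomorphisms among small groups of Lie type), but since we only seek an upper bound on the number of isomorphism types, such overcounting is harmless.

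The hard part is entirely packaged inside Lemma~\ref{LieType}, whose proof rests on Zsigmondy's theorem (Lemma~\ref{Zhigmondy}): the point is that the primitive prime divisors of $q^m-1$ for the relevant exponents $m$ (which grow linearly with the rank) must all lie in $\pi$, forcing both $l$ and the rank to be $O(k)$. The only genuine care needed is in handling the Zsigmondy exceptions ($q=2$, $q=2^6$, and $q$ a Mersenne prime), but these cost only $O(1)$ additional choices and do not affect the asymptotics.
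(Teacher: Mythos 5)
Your proposal is correct and follows essentially the same route as the paper: a case split via the Classification into sporadic ($O(1)$), alternating (bounded via the prime-counting function, giving $O(|\pi|\log|\pi|)$), and Lie type, where Lemma~\ref{LieType} bounds the characteristic, the field exponent, and (for classical groups) the rank, yielding the dominant $O(|\pi|^3)$ term. The only cosmetic difference is that the paper organizes the Lie-type count by the number of free parameters (one-, two-, and three-parameter families) rather than by exceptional versus classical, but the resulting estimates are identical.
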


\begin{proof}  Following the classification of finite simple groups, we divide into the following cases.

\smallskip
{\it Case $S$ sporadic{\rm:}} There are clearly at most $26$ such groups.

\smallskip
{\it Case $S$ alternating{\rm:}} The alternating group $A_m$ has order
divisible by all primes less than $m$. So, if $\pi(A_m)\subseteq\pi$,
then $m$ does not exceed the $(|\pi|+1)$st prime number $p_{|\pi|+1}$. By the Prime Number Theorem, $p_{|\pi|+1}$ is roughly $|\pi|\log |\pi|$. So the number of alternating groups does not exceed this number.

\smallskip

{\it Case $S$ of Lie type{\rm:}} These groups fall into six families
$A_n(p^l)$, $B_n(p^l)$, $C_n(p^l)$, $D_n(p^l)$, ${}^2A_n(p^l)$ and ${}^2D_n(p^l)$
parametrised by rank (one parameter $n$) and field order (two parameters $p$ and $l$), and ten families $E_6(p^l)$, $E_7(p^l)$,
$E_8(p^l)$, $F_4(p^l)$, $G_2(p^l)$, ${}^2E_6(p^l)$, ${}^3D_4(p^l)$, ${}^2F_4(2^l)$,
${}^2B_2(2^l)$ and ${}^2G_2(3^l)$ parametrised by field order (two parameters $p$ and $l$ and for the
last three, only one parameter $l$).

By Lemma~\ref{LieType}, there are at most $|\pi|$ choices for characteristic $p$ (except for one-parameter families), at most $|\pi|+1$ choices for $l$, and for classical groups at most $2|\pi|+3$ choices for their ranks. Thus, there are at most $O(|\pi|)$ groups in each of the one-parameter families, $O(|\pi|^2)$ groups in each of the two-parameter families, and at most $O(|\pi|^3)$ groups in each of the three-parameter families.

Thus, we conclude that there are at most $O(|\pi|^3)$ simple groups $S$ of Lie type such that $\pi(S)\subseteq \pi$. \quad$\Box$

\end{proof}

\medskip

\noindent {\bf Proof of Theorem~\ref{UnrecGK}} We show first that $(2) \Rightarrow (1)$. Assume that a group $G$ contains a non-trivial solvable normal subgroup. Then by Theorem~\ref{UnrecSpec}, there exist infinitely many groups $H$ such that $\omega(G)=\omega(H)$, and Gruenberg--Kegel graphs of all these groups coincide with $\Gamma(G)$.

Now we show that $(1) \Rightarrow (2)$. Assume that $G$ is a group such that there exist infinitely many groups $H$ such that $\Gamma(G)=\Gamma(H)$. Assume that for each group $H$ with $\Gamma(G)=\Gamma(H)$, the solvable radical of $H$ is trivial (otherwise the statement of the theorem holds trivially). If $2$ is adjacent to each odd vertex of $\Gamma(G)$, then $\Gamma(G)=\Gamma(C_2\times G)$, where $C_2$ is the cyclic group of order $2$, and $S(C_2\times G)\not =1$, a contradiction. Thus, for each $H$ with $\Gamma(G)=\Gamma(H)$, $t(2,H)=t(2,G)\ge 2$ and, therefore, by Lemma~\ref{VasAnd}, $\Soc(H)$ is a non-abelian simple group such that $\pi(\Soc(H)) \subseteq \pi(G)$.  By Proposition~\ref{NumSimple}, for each finite set $\pi$ of primes, the number of simple groups $T$ such that $\pi(T)\subseteq \pi$ is finite, therefore the number of almost simple groups $H$ with $\pi(\Soc(H)) \subseteq \pi(G)$ is finite, a contradiction.\hfill$\Box$

\medskip

\noindent {\bf Proof of Theorem~\ref{CoGrAlmSimple}}  Note that $G$ is non-solvable, since otherwise, by Theorem~\ref{UnrecGK}, there are infinitely many  groups $H$ with $\Gamma(G)=\Gamma(H)$. Moreover, in $\Gamma(G)$, each vertex $r$ is non-adjacent to at least one other vertex; otherwise $\Gamma(G)=\Gamma(G\times C_r)$ and again by Theorem~\ref{UnrecGK}, there are infinitely many groups $H$ with $\Gamma(G)=\Gamma(H)$. In particular, $t(2,G)\ge 2$.

By Lemma~\ref{VasAnd}, each group $H$ with $\Gamma(G)=\Gamma(H)$ is such that $H/S(H)$ is almost simple. Now again if $S(H)\not =1$ for some $H$ with $\Gamma(G)=\Gamma(H)$, then by Theorem~\ref{UnrecGK}, there are infinitely many groups $H$ with $\Gamma(G)=\Gamma(H)$. Therefore, each group $H$ with $\Gamma(G)=\Gamma(H)$ is almost simple, in particular, $G$ is almost simple.

By Lemmas~7--14 from~\cite{Gors_Mas2}, if $G$ is almost simple and $t(G)<3$, then there exists a solvable group $T$ such that $\Gamma(G)=\Gamma(T)$. Therefore in this case $G$ is unrecognizable by Gruenberg--Kegel graph. \hfill$\Box$

\section{Proof of Theorem~\ref{NumberPD}}\label{NumbRecBound}

Let $\pi$ be a finite set of primes. We must show that there is a polynomial function $F$ such that the number of almost simple groups $G$ such that $\pi(G)\subseteq \pi$ is at most $F(|\pi|)$.

By Proposition~\ref{NumSimple}, the number of simple groups $S$ such that $\pi(S)\subseteq \pi$ is bounded by a polynomial function of $|\pi|$, moreover the function is at most $O(|\pi|^3)$. Thus, it is sufficient to show that for each simple group $S$ such that $\pi(S)\subseteq \pi$, the number of almost simple groups with socle $S$ is bounded by a polynomial function of $|\pi|$. This is clear if $S$ is sporadic or alternating. Thus, it is sufficient to consider the case when $S$ is a group of Lie type. Since there is a one-to-one correspondence between almost simple groups with socle $S$ and subgroups of $\Out(S)$, we need to prove the following result:

\begin{prop}\label{OutSimple}
Let $\pi$ be a finite set of primes and $S$ be a simple group of Lie type such that $\pi(S)\subseteq \pi$. Then there is a polynomial function $f$ such that the number of subgroups of $\Out(S)$ is bounded by $f(|\pi(S)|)$, and it is at most $O(|\pi|^4)$.
\end{prop}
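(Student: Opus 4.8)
The plan is to bound $|\Out(S)|$ itself by a polynomial in $|\pi(S)|$, since the number of subgroups of a finite group $K$ is at most $2^{\lceil \log_2|K|\rceil \cdot r}$ where... actually a cleaner route is available. Recall the standard structure theorem: for a simple group $S=G_n(q)$ of Lie type with $q=p^l$, the outer automorphism group factors as $\Out(S)=D \rtimes (\Phi \times \Gamma)$ (diagonal, field, and graph automorphisms), where the field automorphism group $\Phi$ is cyclic of order $l$ (up to the small adjustments for the twisted families), the graph automorphism group $\Gamma$ has order at most $6$ (bounded absolutely, coming from the symmetries of the Dynkin diagram, the largest being $S_3$ for $D_4$), and the diagonal part $D$ has order dividing $\gcd(n+1,q-1)$ for linear type, with uniformly small analogues ($\gcd(n+1,q\mp1)$, $\gcd(2,q-1)$, $\gcd(4,q^n\mp1)$, etc.) across the families. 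So $|\Out(S)| \le c\cdot l\cdot |D|$ for an absolute constant $c$.

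First I would bound each factor in terms of $|\pi| := |\pi(S)|$. The graph factor is bounded by the absolute constant $6$, so it contributes nothing to the polynomial growth. For the field factor, Lemma~\ref{LieType}(2) already gives $l \le |\pi|+1$. The main work is the diagonal factor $|D|$: in every family $|D|$ divides a gcd of the form $\gcd(d, q^i \mp 1)$ with $d\in\{2,4\}$ or $d=n+1$. In the bounded-$d$ cases $|D|\le 4$ and we are done immediately. In the linear/unitary case $|D|$ divides $\gcd(n+1,q\mp1)\le q-1$; but here I would instead observe that $|D|$ divides $q-1$, and since $q-1\mid |S|$, every prime dividing $|D|$ lies in $\pi$, while Lemma~\ref{LieType}(4) bounds the rank by $n\le 2|\pi|+3$, hence $|D|=\gcd(n+1,q\mp1)\le n+1\le 2|\pi|+4$. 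Thus in all cases $|D|=O(|\pi|)$.

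Combining, $|\Out(S)| = |D|\cdot|\Phi|\cdot|\Gamma| \le (2|\pi|+4)\cdot(|\pi|+1)\cdot 6 = O(|\pi|^2)$. To pass from the order of $\Out(S)$ to the number of its \emph{subgroups}, I would use the final structural feature: $\Out(S)$ has a normal series (diagonal, then field, then graph) of length at most $3$ with cyclic factors of orders $|D|$, $l$, and a divisor of $6$ respectively. By Lemma~\ref{GensNumber}, every subgroup of $\Out(S)$ is generated by at most $3$ elements, so the number of subgroups is at most $|\Out(S)|^3 = O(|\pi|^6)$; a slightly more careful count, generating each subgroup by one element from each cyclic factor (so at most $|D|\cdot l \cdot 6$ choices), gives the claimed $f(|\pi(S)|)=O(|\pi|^4)$ directly.

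The main obstacle I anticipate is purely bookkeeping: assembling the exact orders of $\Out(S)$ uniformly across all sixteen families (linear, unitary, symplectic, orthogonal of both parities, and the ten exceptional/twisted families), and verifying that the diagonal factor is the only one that can grow with the rank. The exceptional groups are harmless since their rank is bounded, so $|D|$ is an absolute constant there; the entire rank-dependence is concentrated in $A_n$ and ${}^2\!A_n$, where the gcd bound $|D|\le n+1$ together with Lemma~\ref{LieType}(4) is the crux. Everything else reduces to citing the known order formulas for $\Out(S)$ and invoking Lemmas~\ref{GensNumber} and~\ref{LieType}.
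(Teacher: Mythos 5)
Your structural setup coincides with the paper's (write $\Out(S)$ as a diagonal part extended by field and graph parts, bound the diagonal part by the rank via Lemma~\ref{LieType}(4) and the field part via Lemma~\ref{LieType}(2)), but the step that is supposed to deliver the bound on the \emph{number of subgroups} contains a genuine error. Your ``more careful count'' --- that each subgroup is obtained by choosing one element from each cyclic factor, so that there are at most $|D|\cdot l\cdot 6$ subgroups --- is not a valid counting principle. The generators produced by Lemma~\ref{GensNumber} lie in the successive terms of the normal series, not in the quotient factors; different elements in the same coset of a lower term generate different subgroups, so the number of choices per generator is the order of the corresponding \emph{term}, not of the factor. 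Indeed a group can have more subgroups than the product of the orders of its cyclic factors (i.e.\ than its own order): $C_2\times C_2\times C_2$ has $16$ subgroups and the dihedral group of order $8$ has $10$, and sections of exactly this shape do arise here (for $D_n(q)$ with $n$ even and $q$ odd the diagonal part is $C_2\times C_2$, on which the graph automorphism acts). So the $|D|\cdot l\cdot 6$ bound cannot be justified this way.

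Your fallback bound is correct in spirit but too weak for the statement: bounded generation gives at most $|\Out(S)|^3=O(|\pi|^6)$ subgroups (and even this needs adjustment for $D_n(q)$, $n$ even, and $D_4(q)$, where the diagonal resp.\ graph parts are non-cyclic and the series is longer), which proves polynomiality --- enough for Theorem~\ref{NumberPD} with a worse exponent --- but not the claimed $O(|\pi|^4)$. The paper reaches $O(|\pi|^4)$ by a two-step count whose two key tools you never invoke. First, the number of subgroups $T$ of $\Out(S)/C\cong G_1\times G_2$ is only \emph{linear} in $|\pi|$: since $G_1$ is cyclic its subgroups are counted by the divisor function, and Lemma~\ref{LieType}(3) gives $d(l)\le|\pi|+1$ (this is where the divisor bound, rather than the bound $l\le|\pi|+1$ on $l$ itself, saves a factor of $|\pi|$); Goursat's lemma then extends this to $G_1\times G_2$. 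Second, for each such $T$, the subgroups $H$ of $\Out(S)$ with $HC/C=T$ are supplements of the cyclic group $C$ in the preimage of $T$, and Lemma~\ref{SupplNumber} --- designed precisely to control the coset ambiguity that your count ignores --- bounds their number by $|C|^{k+1}\le|C|^3\le(n+1)^3=O(|\pi|^3)$, since $T$ is at most $2$-generated (at most $3$-generated for $D_4$, where $|C|\le4$). Multiplying the two bounds gives $O(|\pi|)\cdot O(|\pi|^3)=O(|\pi|^4)$. To repair your proposal you would need to replace element-counting in the field direction by subgroup-counting via $d(l)$, and replace the per-factor element count by the supplement count of Lemma~\ref{SupplNumber}.
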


\begin{proof} Let $S=G_n(q)$, where $q=p^l$, be a simple group of Lie type of Lie rank $n$ with base field $GF(q)$ such that $\pi(S)\subseteq \pi$.

Then it is well-known (see, for example \cite[Theorem~2.5.12]{Gorenstein_LS}) that
\[\Out(S)=C\rtimes (G_1 \times G_2),\]
where $C=\mathrm{Outdiag}(S)$ and either $|C| \le 4$ or $S$ is of type $A_n(q)$ or ${^2}A_n(q)$, $C$ is cyclic, and $|C|$ is bounded by $n+1$;
$G_1$ is cyclic and $|G_1| \in \{l, 2l, 3l\}$;
and $G_2$ has order at most $2$ except in the case when $S$ is of type $D_4$, when it is isomorphic to $S_3$\footnote{Here it is sufficient that $\Out(S)=C.G_2.G_1$, where groups $C$, $G_1$, and $G_2$ are as above.}.

Let $N(G)$ be the set of all the subgroups of a group $G$ with a normal subgroup $C$. We can define an equivalence relation $\varrho_C$ on $N(G)$ by setting $H_1 \varrho_C H_2$ if $H_1C/C=H_2C/C$. Then $N(G)$ is the union of equivalence classes of $\varrho_C$; that is, in our case,
\begin{equation}\label{lll}
N(\Out(S))=\bigcup\limits_{T \le G_1\times G_2}^{}\{H \mid HC/C=T\}.
\end{equation}

Since $G_1$ is cyclic, there is a one-to-one correspondence between the number of subgroups of $G_1$ and the number of pairwise distinct divisors of $|G_1|$. Since by Lemma~\ref{LieType}, $d(l)\le |\pi|+1$, it is easy to see that the number of subgroups of $G_1$ is bounded by a linear function of $|\pi|$. Now by Lemmas~\ref{GensNumber} and~\ref{SupplNumber}, the number of subgroups of $G_1\times G_2$ is bounded by a linear function of $|\pi|$. Thus, the number of classes in the union (\ref{lll}) is bounded by a linear function of $|\pi|$.

By Lemma~\ref{GensNumber}, each subgroup $T$ of the group $\Out(S)/C\cong G_1\times G_2$ is at most $2$-generated except in the case when $S$ is of type $D_4$, when $T$ is at most $3$-generated. Therefore by Lemma~\ref{SupplNumber}, if $S$ is not of type $D_4$, then the number of subgroups $H$ of $Out(S)$ such that $HC/C=T$ is at most $|C|^3 \le n^3 \le (2|\pi|+3)^3$; in the case of groups of type $D_4$ we have $|C| \le 4$ and possibly need to apply Lemma~\ref{SupplNumber} twice. Thus, the number of classes in the union (\ref{lll}) is bounded by a linear function of $|\pi|$ and the number of subgroups in each class is bounded by a polynomial function of $|\pi|$ which is at most $O(|\pi|^3)$, and therefore, $|N(\Out(S))|$ is bounded by a polynomial function of $|\pi|$, and it is at most $O(|\pi|^4)$. \hfill$\Box$

\end{proof}

\bigskip

Combining the results of Propositions~\ref{NumSimple} and~\ref{OutSimple}, we have proved the following assertion.

\begin{theorem} There exists a polynomial function $F$ such that if $\pi$ is a finite set of primes, then there are at most $F(|\pi|)$ pairwise non-isomorphic almost simple groups $G$ such that $\pi(G)\subseteq \pi$, and this number is at most $O(|\pi|^7)$.
\end{theorem}

So the following corollary completes the proof of Theorem~\ref{NumberPD}{\rm:}

\begin{cor} Let $G$ be a group such that there are more than $F(|\pi(G)|)$ pairwise non-isomorphic groups $H$ such that $\Gamma(H)=\Gamma(G)$. Then $G$ is unrecognizable by Gruenberg--Kegel graph.
\end{cor}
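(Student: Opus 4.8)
The plan is to prove the corollary by contraposition, deducing it as an immediate consequence of the preceding propositions together with Theorem~\ref{UnrecGK} and Theorem~\ref{CoGrAlmSimple}. Specifically, suppose $G$ is \emph{not} unrecognizable by its Gruenberg--Kegel graph; then by definition there are only finitely many pairwise non-isomorphic groups $H$ with $\Gamma(H)=\Gamma(G)$, so $G$ is $k$-recognizable for some non-negative integer $k$. I want to show this forces the count of such $H$ to be at most $F(|\pi(G)|)$, which contradicts the hypothesis that there are more than $F(|\pi(G)|)$ of them.

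First I would invoke Theorem~\ref{CoGrAlmSimple}: since $G$ is $k$-recognizable by Gruenberg--Kegel graph, every group $H$ with $\Gamma(H)=\Gamma(G)$ is almost simple. The key observation is that any such $H$ satisfies $\pi(H)=\pi(\Gamma(H))=\pi(\Gamma(G))=\pi(G)$, because the vertex set of the Gruenberg--Kegel graph is precisely the set of prime divisors of the group order. Hence every group $H$ realizing $\Gamma(G)$ is an almost simple group with $\pi(H)\subseteq\pi(G)$.

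Next I would apply the final Proposition of Section~\ref{NumbRecBound}, which supplies a polynomial function $F$ (with $F(x)=O(x^7)$) such that for the finite set of primes $\pi=\pi(G)$ there are at most $F(|\pi(G)|)$ pairwise non-isomorphic almost simple groups $G'$ with $\pi(G')\subseteq\pi(G)$. Since, by the previous step, every $H$ with $\Gamma(H)=\Gamma(G)$ lies among these almost simple groups, the number of pairwise non-isomorphic such $H$ is at most $F(|\pi(G)|)$. This directly contradicts the standing assumption that there are more than $F(|\pi(G)|)$ of them, so the assumption that $G$ is not unrecognizable must fail, and $G$ is unrecognizable by its Gruenberg--Kegel graph.

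The argument is essentially a packaging step, so I do not anticipate a genuine obstacle; the only point requiring care is the logical bookkeeping around the contrapositive. The subtlety is that Theorem~\ref{CoGrAlmSimple} and the counting Proposition are each phrased for the situation where finiteness already holds, so I must make sure I first establish finiteness (via the $k$-recognizability assumption in the contrapositive) before I am entitled to conclude that all realizing groups are almost simple and then bound their number. Once the quantifiers are aligned, the two cited results combine to give the bound $F(|\pi(G)|)$ on the number of realizing groups, completing the proof.
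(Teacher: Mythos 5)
Your proposal is correct and follows essentially the same route as the paper: assume finiteness (the negation of unrecognizability), apply Theorem~\ref{CoGrAlmSimple} to conclude every group realizing $\Gamma(G)$ is almost simple, and then contradict the hypothesis using the proposition bounding the number of almost simple groups $H$ with $\pi(H)\subseteq\pi(G)$ by $F(|\pi(G)|)$. Your extra remark that $\pi(H)=\pi(G)$ (since the vertex set of the Gruenberg--Kegel graph is the prime spectrum) is a slightly sharper form of the containment the paper uses, but the argument is identical in substance.
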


\begin{proof} Suppose that there is only a finite number of groups $H$ such that $\Gamma(H)=\Gamma(G)$. Then by Theorem~\ref{CoGrAlmSimple}, each group $H$ with $\Gamma(H)=\Gamma(G)$ is almost simple. Thus, there are more than $F(|\pi(G)|)$ pairwise non-isomorphic almost simple groups $H$ with $\pi(H)\subseteq \pi(G)$, a contradiction. \hfill$\Box$
\end{proof}

\section{Proof of Theorem~\ref{RecIsomType}}\label{UnlabGr}

If $G={^2}G_2(27)$, then $\Gamma(G)$ is as follows:
\medskip

\centerline{
    \begin{tikzpicture}
    \tikzstyle{every node}=[draw,circle,fill=white,minimum size=4pt,
                            inner sep=0pt]
    \draw (0,0) node (3) [label=below:$3$]{}
         ++ (0:1.0cm) node (7) [label=below:$7$]{}
         ++ (0:1.0cm) node (13) [label=below:$13$]{}
     ++ (0:1.0cm) node (19) [label=below:$19$]{}
        ++ (0:1.0cm) node (37) [label=below:$37$]{}
        ++ (-1.0cm:-2.0cm) node (2) [label=above:$2$]{}
        (2)--(3)
        (2)--(7)
        (2)--(13);
    \end{tikzpicture}
}
 \medskip
\noindent If $H$ is a group such that $\Gamma(H)$ and $\Gamma(G)$ are isomorphic as abstract graphs, then $\Gamma(H)$ is a bipartite graph with parts of sizes $1$ and $5$. By Lemma~\ref{K(1,5)}, $\pi(H)=\{2,3,7,13,19,37\}$ and $H/O_2(H) \cong {^2}G_2(27)$. In particular,
$\Gamma(H)=\Gamma(G)$. But $G$ is recognizable by Gruenberg--Kegel graph by Lemma~\ref{Rec2G2}. Thus, $H\cong G$.

\medskip
Let $G=E_8(2)$ and $H$ be a group such that $\Gamma(H)$ and $\Gamma(G)$ are isomorphic as abstract graphs. By Lemma~\ref{E8(2)}$(3)$, $\Gamma(H)$ is disconnected. Now by Lemmas~\ref{Gruenberg--Kegel theorem} and~\ref{VasAnd}, $H/F(H)$ is almost simple, moreover, if $S=Soc(H/F(H))$, then $s(S) \ge s(G) \ge 4$. By Table~$1$ and Lemma~\ref{E8(2)}$(2)$, $S$ is one of the following groups{\rm:}  $A_2(4)\cong \PSL_3(4)$, ${^2}B_2(q)$, $^2E_6(2)$, $E_8(2)$, $M_{22}$, $J_1$, $O'N$, $LyS$, $Fi_{24}'$, $F_1=M$, $J_4$.

If $S$ is one of groups $A_2(4)$, $^2E_6(2)$, $M_{22}$, $J_1$, $O'N$, $LyS$, $Fi_{24}'$, $J_4$, then by \cite{Atlas}, $|\pi(\Aut(S)|\le 10$, therefore, $$|\pi(F(H)) \setminus \pi(H/F(H))|\ge 6.$$ Note that the primes from $\pi(F(H))$ form a clique in $\Gamma(H)$, therefore, $\Gamma(H)$ contains at least $6$ vertices of degree at least $5$, a contradiction to Lemma~\ref{E8(2)}$(3)$.

Let $S \cong {^2}B_2(q)$ for some $q$. Since $s(G)=s(S)$, we have $|\pi_i(S)|=1$ for $i \in \{2,3,4\}$. In particular, $q-1$ is a prime power. By Lemma~\ref{Gerono}, $q=2^a$, where $a$ is a prime. Therefore, $|\pi(H/F(H))| \le |\pi(\Aut(S)|=5$. Again we obtain that $$|\pi(F(H)) \setminus \pi(H/F(H))|\ge 11,$$ therefore, $\Gamma(H)$ contains at least $11$ vertices of degree at least $10$, a contradiction to Lemma~\ref{E8(2)}$(3)$.

Let $S\cong M$. Since $\Aut(M)\cong M$ and $|\pi(M)|=15$, there is $p \in \pi(F(H)) \setminus \pi(H/F(H))$. Note that by Lemma~\ref{Gruenberg--Kegel theorem}, $p \in \pi_1(H)$. Since $s(G)=s(S)$, we have $|\pi_i(S)|=1$ for $i \in \{2,3,4\}$. Thus, by Lemma~\ref{F1=M}, the primes $41$, $59$, and $71$ are non-adjacent to $p$ in $\Gamma(H)$. By \cite{Atlas,EAtlas}, $M$ contains subgroups $M_1 \cong 41:40$, $M_2 \cong 59:29$, $M_3\cong 71:35$, and $M_4 \cong 23:11$ which are Frobenius groups with cyclic complements. By Lemma~\ref{OrdersInExt}, the prime $p$ is adjacent in $\Gamma(H)$ to the primes $2$, $5$, $7$, $29$, and to at least one of the primes $11$ and $23$. Thus, degree of vertex $p$ in $\Gamma(H)$ is at least $5$ as well as by Lemma~\ref{F1=M}, degree of vertex $5$ in $\Gamma(H)$ is at least $6$, and degree of vertex $7$ in $\Gamma(H)$ is at least $5$. Moreover, by Lemma~\ref{F1=M}, degree of vertex $2$ in $\Gamma(H)$ is at least $11$ and degree of vertex $3$ in $\Gamma(H)$ is at least $10$. Thus, $\Gamma(H)$ contains at least $5$ vertices of degree at least $5$, a contradiction to Lemma~\ref{E8(2)}$(3)$.

Thus, $S \cong G=E_8(2)$, therefore $\Gamma(H)=\Gamma(G)$, $\pi(F(H))\subseteq \pi_1(G)$, and $H/F(H) \cong G$. Let $F(H)\not = 1$ and $r \in \pi(F(H)) \subseteq \pi_1(G)$. We show that $r$ and $241$ are adjacent in $\Gamma(H)$. Since $\Gamma(H)=\Gamma(S)=\Gamma(H/(O_{r'}(H)\times \Phi(O_r(H))))$, without loss of generality, we can assume that $F(H)=O_r(H)$ and $O_r(H)$ is elementary abelian. By Lemma~\ref{SpecSemDirProd}, we can assume that $H = O_r(H) \rtimes E_8(2)$.  If $r=2$, then by Lemma~\ref{Unisingular}, each element from $E_8(2)$ has a non-zero fixed point on $O_2(H)$. In particular, $2$ and $241$ are adjacent in $\Gamma(H)$. Thus, $r \not =2$. By \cite{Atlas}, the group $E_8(2)$ has a subgroup isomorphic to ${^3}D_4(4)$. Now by Lemma~\ref{3D4}, each element of order $241$ has a non-zero fixed point on $O_r(H)$. Thus, in any case $s(H)\le 3$, a contradiction. \hfill$\Box$

\section{Other graphs}
\label{s:graphs}

In this section we give the proofs of Theorems~\ref{t:graphiso} and \ref{t:nullgk}.

\paragraph{Proof of Theorem~\ref{t:graphiso}}
For each of the four possible types of graph, $G$ and $H$ have the same order, so their Gruenberg--Kegel graphs have the same set of vertices.

We show that in all cases except the power graph, primes $p$ and $q$ are adjacent in the Gruenberg--Kegel graph of $G$ if and only if there is a maximal clique in the graph $\mathrm{T}(G)$ with size divisible by $pq$. This is clear in the cases of the enhanced power graph and the commuting graph; for the maximal cliques in these are maximal cyclic subgroups or maximal abelian subgroups of $G$ respectively, and if their order is divisible by $pq$ (where $p$ and $q$ are distinct primes), then they contain elements of order $pq$. Conversely an element of order $pq$ is contained in a maximal cyclic (or abelian) subgroup.

Consider the deep commuting graph of a group $G$. It is shown in~\cite{CK} that this graph is the projection onto $G$ of the commuting graph of a \emph{Schur cover} of $G$~\cite{Schur}. Let $K$ be a Schur cover of $G$, with $K/Z\cong G$. A maximal clique has the form $A=B/Z$, where $B$ is a maximal abelian subgroup of $K$ (containing $Z$). So $A$ is an abelian subgroup of $G$, and if $pq$ divides $|A|$ (with $p$ and $q$ distinct primes), then $A$ contains an element of order $pq$. Conversely, suppose that $p$ and $q$ are joined in the Gruenberg--Kegel graph, and let $x$ and $y$ be commuting elements of orders $p$ and $q$ in $G$, and $a$ and $b$ their lifts in $K$. Then $a$ and $b$ are contained in $\langle Z,ab\rangle$, which is an extension of a central subgroup by a cyclic group and hence is abelian; so $a$ and $b$ commute. Choosing a maximal abelian subgroup of $K$ containing $a$ and $b$ and projecting onto $G$ gives a maximal clique in the deep commuting graph of $G$, with order divisible by $pq$.

For the power graph, the assertion that an element of order $pq$ is contained
in a clique of size $pq$ fails: for example, the power graph of $C_6$ is not
a clique.
Instead, we use the fact, shown in \cite{ZBM}, that groups with isomorphic power graphs also have isomorphic enhanced power graphs; so they have equal Gruenberg--Kegel graphs, by what has already been proved.\hfill\qed

\paragraph{Proof of Theorem~\ref{t:nullgk}}
The equivalence of $(a)$ and $(b)$ is \cite[Theorem 28]{AACNS}; we give the proof for completeness. If the group $G$ contains an element $g$ of order $pq$, where $p$ and $q$ are primes, then $\langle g^p,g^q\rangle=\langle g\rangle$, so $g^p$ and $g^q$ are joined in the enhanced power graph, but not in the power graph. Conversely, suppose that there are no edges in the Gruenberg--Kegel graph of $G$. Then every element of $G$ has prime power order. Suppose that $g$ and $h$ are joined in the enhanced power graph of $G$, so that they generate a cyclic group, necessarily of prime power order. This group must be generated by one of $g$ and $h$, say $g$; then $h$ is a power of $g$, so $g$ and $h$ are adjacent in the power graph.

We now turn to the equivalence of (b) and (c).

Finite groups whose Gruenberg--Kegel graphs do not have edges (these groups are known as
\emph{EPPO-groups}) were investigated by many authors (see, for example,
\cite{Higman,Shi_Yang,Suzuki}, also see \cite{KondrKhr1,KondrKhr2} and
\cite[Proposition]{AKondTrianFree2}). Let us just summarize these results and
obtain the explicit list of such groups given in (c). This
result does not depend on the classification of finite simple groups.

\begin{lemma}{\rm ({See {\rm \cite{Bus_Gor}}, see also, for example, {\rm \cite[Lemma~1]{Zin_Maz}}})}\label{FrGroups}

Let $G = FH$ is a Frobenius group with kernel $F$ and complement $H$. Then

$(a)$ $F=F(G)$ is the Fitting subgroup of $G$ and $|H|$ divides $|F| - 1$.

$(b)$ Each subgroup of order $pq$ from $H$, where $p$ and $q$ are {\rm(}not necessary distinct{\rm)} primes, is cyclic. In particular, each Sylow subgroup of $H$ is either cyclic or a {\rm(}generalized{\rm)} quaternion group.

$(c)$ If $|H|$ is even, then $H$ contains a unique involution.

$(d)$ If $H$ is non-solvable, then $H$ contains a subgroup $K=S \times Z$, where
$S\cong SL_2(5)$, $(|S|, |Z|) = 1$, and $|H:K| \in \{1,2\}$.
\end{lemma}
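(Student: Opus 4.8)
The plan is to deduce everything from three classical structural facts about a Frobenius group $G = F \rtimes H$: the kernel and complement have coprime orders, $\gcd(|F|,|H|)=1$; the complement $H$ acts on $F$ \emph{fixed-point-freely}, meaning no nontrivial element of $H$ fixes a nontrivial element of $F$; and $F$ is nilpotent, by Thompson's theorem. Taking these as given, parts $(a)$--$(c)$ will follow from the arithmetic and module theory of fixed-point-free actions, while only part $(d)$ will require the deep classification of non-solvable complements.

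For $(a)$, since $F$ is nilpotent and normal, $F \le F(G)$. For the reverse inclusion I would apply Dedekind's modular law to write $F(G) = F\,(F(G)\cap H)$ and set $H_0 = F(G)\cap H \le H$. Because $F$ is a nontrivial normal subgroup of the nilpotent group $F(G)$, the intersection $F \cap Z(F(G))$ is nontrivial; any nontrivial $z$ in it is centralized by $H_0 \le F(G)$, so fixed-point-freeness forces $H_0 = 1$ and hence $F = F(G)$. The divisibility $|H|\mid |F|-1$ is then immediate, as $H$ permutes the $|F|-1$ nontrivial elements of $F$ with every point-stabilizer trivial, so each orbit has length $|H|$.

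For $(b)$ I would invoke the standard principle that a group acting fixed-point-freely on a nontrivial group has every subgroup of order $pq$ (with $p,q$ primes, possibly equal) cyclic. Using coprimality and nilpotency to pass to a nontrivial elementary abelian chief factor $V$ of $F$ on which the action remains fixed-point-free, the obstruction $A \cong C_p\times C_p$ is excluded by the coprime-action identity $\dim V + p\,\dim C_V(A) = \sum_{i=1}^{p+1}\dim C_V(A_i)$, whose right-hand side vanishes when each of the $p+1$ subgroups $A_i$ of order $p$ acts without fixed points, forcing $V=0$; and a nonabelian group of order $pq$ is excluded by tracking how its Sylow $p$-subgroup permutes the eigenspaces of the normal $C_q$ and producing a nonzero fixed space. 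The Sylow conclusion is then purely $p$-local: a $p$-group in which every subgroup of order $p^2$ is cyclic has a unique subgroup of order $p$, and such a $p$-group is cyclic for odd $p$ and cyclic or generalized quaternion for $p=2$. Part $(c)$ follows from $(b)$ together with the observation that an order-$2$ element acting fixed-point-freely inverts $F$ (which forces $F$ abelian), so the product of two involutions of $H$ would centralize $F$ and hence be trivial; thus $H$ has a single involution.

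The main obstacle is $(d)$, which I do not expect to prove by elementary means. Here I would appeal to Zassenhaus's classification of non-solvable Frobenius complements: such a complement has a normal subgroup of index $1$ or $2$ isomorphic to $SL_2(5)\times Z$ with $Z$ of order coprime to $|SL_2(5)|$. This is exactly the assertion of $(d)$, and it is the genuinely hard input; the remaining parts are organized so as to reduce the lemma to this single classical theorem, which I would cite rather than reprove.
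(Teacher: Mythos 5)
The paper never proves this lemma: it is quoted as a known classical fact, with citations to Busarkin--Gorchakov's book on finite splittable groups and to Lemma~1 of Zinov'eva--Mazurov, so there is no in-paper argument for yours to match or diverge from. What you have done is reconstruct the standard literature proof, and your reconstruction is correct. For $(a)$, Dedekind's law gives $F(G)=F\,(F(G)\cap H)$ once Thompson's theorem supplies $F\le F(G)$, a nontrivial element of $F\cap Z(F(G))$ is centralized by $F(G)\cap H$, and fixed-point-freeness kills that intersection; the orbit count on $F\setminus\{1\}$ gives $|H|\mid |F|-1$. For $(b)$, the coprime fixed-point formula $C_{F/N}(h)=C_F(h)N/N$ does keep the action on a chief factor fixed-point-free, your weight-space identity $\dim V+p\dim C_V(A)=\sum_{i=1}^{p+1}\dim C_V(A_i)$ is the correct one and rules out $C_p\times C_p$, the eigenspace-averaging argument rules out a nonabelian group of order $pq$ (an abelian one with $p\ne q$ is automatically cyclic), and the classification of $p$-groups with a unique subgroup of order $p$ finishes the Sylow claim. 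For $(c)$, the inversion argument (an involution acting freely inverts $F$, so the product of two involutions centralizes $F$ and is trivial) is the standard clean route, sharper than trying to deduce uniqueness from $(b)$ alone. For $(d)$ you correctly identify that Zassenhaus's classification of non-solvable Frobenius complements \emph{is} the assertion, and citing it is exactly what the paper's own sources do. The one caveat worth recording is that your three ``classical structural facts'' are not uniformly elementary: fixed-point-freeness of the complement's action is built into the kernel--complement setup (via Frobenius's character-theoretic theorem, if one starts from the permutation-group definition), coprimality actually follows from fixed-point-freeness by the same orbit count you use, and Thompson's nilpotency theorem is deep; so your proof is self-contained only modulo the same black boxes the cited references rely on, which is a perfectly acceptable state of affairs for a preliminary lemma of this kind.
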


\begin{lemma}{\rm (\cite[Lemma~4]{DoJaLu})}\label{BrChar} Let $G$ be a finite simple group, $F$ is a field of characteristic $p > 0$, $V$ is a absolute irreducible $GF$-module, and $\beta$ is a Brauer character of $V$. If $g \in G$ is an element of prime order distinct from $p$, then
$$
dim C_V (g) = (\beta_{\langle g \rangle}, 1_{\langle g \rangle}) = \frac{1}{|g|}\sum_{x \in {\langle g \rangle}} \beta(x).
$$
\end{lemma}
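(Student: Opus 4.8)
The plan is to work entirely inside the cyclic $p'$-group $\langle g\rangle$ and to exploit the fact that, on a group of order prime to $p$, the Brauer character is literally an ordinary character, so that the classical orthogonality relations apply directly. First I would observe that since $g$ has order prime to $p$, every element of $\langle g\rangle$ is $p$-regular; hence $\beta$ is defined on all of $\langle g\rangle$, the symbol $\beta_{\langle g\rangle}$ denotes the restriction to this subgroup of the Brauer character of $V$, and $(\beta_{\langle g\rangle},1_{\langle g\rangle})$ is the usual Hermitian inner product of class functions on the abelian group $\langle g\rangle$ (the complex conjugate is invisible because $1_{\langle g\rangle}$ is real).

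Second, I would diagonalise $g$. Because $p\nmid|g|$, the polynomial $X^{|g|}-1$ is separable over the algebraic closure $\overline{F}$, so $g$ acts semisimply on $V\otimes_F\overline{F}$ with eigenvalues that are $|g|$-th roots of unity. Writing $V_\zeta$ for the $\zeta$-eigenspace of $g$ and $m_\zeta=\dim V_\zeta$, this gives a decomposition of $V|_{\langle g\rangle}$ into $\langle g\rangle$-stable eigenspaces. Since extension of scalars does not change the dimension of a fixed-point space, $C_V(g)=V_1$ and therefore $\dim C_V(g)=m_1$.

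Third, I would unwind the definition of the Brauer character. Fixing the standard isomorphism $\phi$ between the $|g|$-th roots of unity in $\overline{F}^\times$ and those in $\mathbb{C}^\times$, the defining property of $\beta$ yields $\beta(g^j)=\sum_\zeta m_\zeta\,\phi(\zeta)^j$ for each $j$ (using that $\phi$ is a homomorphism and that the eigenvalues of $g^j$ are the $j$-th powers of those of $g$); that is, $\beta_{\langle g\rangle}$ is exactly the ordinary character of the complex representation of $\langle g\rangle$ in which the linear character $g\mapsto\phi(\zeta)$ occurs with multiplicity $m_\zeta$. Substituting this into the inner product and interchanging the two sums gives
$$(\beta_{\langle g\rangle},1_{\langle g\rangle})=\frac{1}{|g|}\sum_{j=0}^{|g|-1}\beta(g^j)=\sum_\zeta m_\zeta\cdot\frac{1}{|g|}\sum_{j=0}^{|g|-1}\phi(\zeta)^j,$$
where the inner geometric sum equals $1$ when $\zeta=1$ and $0$ otherwise, by orthogonality of characters of the cyclic group $\langle g\rangle$. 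Hence the expression collapses to $m_1=\dim C_V(g)$, which is the claim.

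The only genuinely delicate points are conventions rather than deep obstacles. One must ensure that the lifting isomorphism $\phi$ used to define $\beta$ is the same one used to read off the eigenvalues of $g$, so that $\beta_{\langle g\rangle}$ is a bona fide ordinary character to which orthogonality may be applied. This compatibility, and the absence of any Brauer-to-ordinary correction terms, is precisely where the hypothesis that $g$ is a $p'$-element is used: it guarantees both the semisimplicity of $V|_{\langle g\rangle}$ and that the decomposition matrix of $\langle g\rangle$ is the identity. I would also note that absolute irreducibility of $V$ as a $G$-module plays no role here; the identity holds for the restriction to $\langle g\rangle$ of any $FG$-module with Brauer character $\beta$.
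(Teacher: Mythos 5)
Your proof is correct, but there is nothing in the paper to compare it against: the paper states this lemma as a quoted result, with the citation \cite[Lemma~4]{DoJaLu}, and gives no internal argument. Your proposal supplies the standard proof that such citations rest on, and every step checks out: since $g$ is a $p'$-element, all of $\langle g\rangle$ is $p$-regular, so $\beta_{\langle g\rangle}$ is defined everywhere; the separability of $X^{|g|}-1$ over $\overline{F}$ forces $g$ to act semisimply on $V\otimes_F\overline{F}$, so $C_V(g)$ is the $1$-eigenspace (with dimension unchanged by scalar extension, since it is the kernel of $g-1$); and the compatibility of the fixed lifting homomorphism $\phi$ with taking powers shows that $\beta_{\langle g\rangle}$ is an honest ordinary character of the cyclic group $\langle g\rangle$, after which the averaging sum isolates $m_1=\dim C_V(g)$ by orthogonality (here you implicitly use that $\phi$ is injective on $|g|$-th roots of unity, so $\phi(\zeta)=1$ iff $\zeta=1$; this holds because $p\nmid|g|$ gives exactly $|g|$ such roots in $\overline{F}^\times$ and $\phi$ maps them isomorphically onto the complex $|g|$-th roots of unity --- worth saying explicitly, though it is not a gap). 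Your closing observations are also accurate and mildly sharper than the statement as given: neither the simplicity of $G$ nor the absolute irreducibility of $V$ plays any role, and the sole use of the hypothesis that $g$ has prime order distinct from $p$ is to make $g$ a $p'$-element; the argument works verbatim for any $p'$-element $g$ and any finite-dimensional $FG$-module.
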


\begin{lemma}{\rm (\cite[Theorem~1]{Higman})}\label{HigmanThm} Assume that every non-trivial element of a finite solvable group $G$ of composite order is of prime power order. Then $|\pi(G)|\le2$.
\end{lemma}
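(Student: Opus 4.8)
The plan is to show that the prime-power-order hypothesis forces the solvable group $G$ to be, apart from a single $p$-group, a Frobenius group whose complement is so rigid that it cannot involve two primes. Note first that the hypothesis says precisely that $G$ contains no element of order $st$ for distinct primes $s,t$; equivalently $\Gamma(G)$ has no edges and every element of $G$ has prime-power order. I would begin by analysing the Fitting subgroup $F=F(G)$, which is non-trivial since $G$ is solvable (the statement being vacuous for $G=1$). Because $F$ is nilpotent it is the direct product of its Sylow subgroups; if two distinct primes divided $|F|$, then elements of those prime orders would lie in different, hence commuting, Sylow factors and their product would have order a product of two distinct primes, contrary to hypothesis. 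Hence $F$ is a $p$-group for a single prime $p$.

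Next I would pass to a Hall $p'$-subgroup $K$ of $G$, which exists by solvability, and consider the subgroup $FK$ (a subgroup since $F\trianglelefteq G$), in which $F\cap K=1$. I claim every non-identity element of $K$ acts fixed-point-freely on $F$. For $y\in K$ of prime order $s$ (necessarily $s\neq p$), a non-trivial fixed point $a\in C_F(y)$ would have order $p$ and commute with $y$, giving the forbidden element $ay$ of order $ps$; and for arbitrary $1\neq y\in K$ of prime-power order one has $C_F(y)\subseteq C_F(y')=1$, where $y'$ is the power of $y$ of prime order. Thus $FK$ is a Frobenius group with kernel $F$ and complement $K$, so $K$ is a solvable Frobenius complement, and by Lemma~\ref{FrGroups}$(b)$ every Sylow subgroup of $K$ is cyclic or generalized quaternion and every subgroup of $K$ of order $qr$ (distinct primes) is cyclic. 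Since $K$ is a Hall $p'$-subgroup, $\pi(G)=\{p\}\cup\pi(K)$, so it suffices to prove $|\pi(K)|\le 1$.

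Suppose instead $|\pi(K)|\ge 2$. Running the Fitting-subgroup argument inside the (still prime-power-order) group $K$ shows $F(K)$ is a $q$-group for a single prime $q$, and as a $q$-subgroup of a Frobenius complement it is cyclic or generalized quaternion. Choose $r\in\pi(K)\setminus\{q\}$ and an element $b$ of order $r$; as above the hypothesis forces $C_{F(K)}(b)=1$. If $F(K)$ were generalized quaternion then $q=2$, and its unique involution, being fixed by every automorphism, would lie in $C_{F(K)}(b)$, a contradiction. Hence $F(K)$ is cyclic; its unique subgroup $Q_1$ of order $q$ is characteristic, hence $b$-invariant, and $b$ acts non-trivially on $Q_1$ (otherwise $Q_1\le C_{F(K)}(b)$). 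Then $\langle Q_1,b\rangle=Q_1\rtimes\langle b\rangle$ is a non-abelian subgroup of $K$ of order $qr$, contradicting Lemma~\ref{FrGroups}$(b)$. Therefore $|\pi(K)|\le 1$ and $|\pi(G)|\le 2$.

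The main obstacle, and the conceptual heart of the argument, is the passage to the Frobenius structure: recognising that the prime-power-order condition simultaneously forces $F(G)$ to be a $p$-group and forces the Hall $p'$-complement to act fixed-point-freely, so that the rigidity of Frobenius complements recorded in Lemma~\ref{FrGroups} can be brought to bear. Everything after that is a short computation with cyclic and quaternion $q$-groups; the only point needing care is the generalized-quaternion case, which is dispatched by the existence of a unique, automorphism-fixed, involution.
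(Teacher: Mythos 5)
Your proof is correct. Note, however, that the paper does not prove this lemma at all: it is quoted as an external citation to Higman's Theorem~1, so there is no internal argument to compare with. What you have produced is a self-contained proof whose only imported ingredients are Hall's theorem for solvable groups and Lemma~\ref{FrGroups}, which the paper states anyway; this is arguably closer in spirit to the paper's toolkit than Higman's original route, which proceeds via a minimal normal subgroup and Burnside's structure theory for groups of fixed-point-free automorphisms (essentially the same rigidity you extract from the Frobenius complement structure, but packaged differently). Your key steps all check out: $F(G)$ is a $p$-group by the direct-product structure of nilpotent groups; the Hall $p'$-subgroup $K$ acts fixed-point-freely on $F(G)$ because a non-trivial fixed point together with the acting element would generate an element of order divisible by two primes; hence $F(G)\rtimes K$ is Frobenius and $K$ inherits the complement rigidity; and the endgame with $F(K)$ cyclic or generalized quaternion, the characteristic subgroup $Q_1$ of order $q$, and the non-cyclic subgroup $Q_1\rtimes\langle b\rangle$ of order $qr$ contradicting Lemma~\ref{FrGroups}$(b)$ is sound. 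Two cosmetic points you should tidy: a non-trivial fixed point $a\in C_F(y)$ has $p$-power order rather than order $p$, so replace it by the power $a^{|a|/p}$ before forming the element of order $ps$; and the degenerate case $K=1$ (i.e.\ $G$ a $p$-group) should be dispatched explicitly before invoking the Frobenius structure, since a Frobenius group requires a non-trivial complement.
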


The following assertion is easy to prove, and can be found, for example, in~\cite[Theorem~1]{Lucido2}.

\begin{lemma}\label{non-solvable} Let $G$ be a finite group with ${t(G) \ge 3}$. Then $G$ is non-solvable.
\end{lemma}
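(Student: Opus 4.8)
The plan is to prove the contrapositive: if $G$ is solvable then $t(G)\le 2$. So suppose $G$ is solvable and, for contradiction, that $\Gamma(G)$ contains a coclique $\{p,q,r\}$ of size three. First I would replace $G$ by a Hall $\{p,q,r\}$-subgroup $K$, which exists by P.~Hall's theorem since $G$ is solvable. Because $K\le G$, the group $K$ still has no element of order $pq$, $pr$ or $qr$, while $\pi(K)=\{p,q,r\}$; hence $\Gamma(K)$ is the empty graph on three vertices, so $s(K)=3$, and $K$ is solvable. The whole problem is thereby reduced to ruling out a solvable group with three pairwise non-adjacent primes (equivalently, with $s(K)\ge 3$).

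Next I would apply the Gruenberg--Kegel theorem (Lemma~\ref{Gruenberg--Kegel theorem}) to $K$: since $\Gamma(K)$ is disconnected and $K$ is solvable, case $(3)$ is excluded, so $K$ is either a Frobenius group or a $2$-Frobenius group. If $K$ is $2$-Frobenius, then its prime graph has exactly two connected components (the primes of the cyclic ``middle'' factor, and the primes of the remaining factors); since each component is connected, no three primes can be pairwise non-adjacent, a contradiction. If $K=F\rtimes H$ is Frobenius with kernel $F$ and complement $H$, then $F$ is nilpotent, so $\pi(F)$ induces a clique in $\Gamma(K)$; as $\Gamma(K)$ has no edges this forces $|\pi(F)|=1$ and hence $\pi(H)=\{q,r\}$ with $q,r$ non-adjacent, i.e. $\Gamma(H)$ is disconnected.

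The heart of the argument is to show that this is impossible: a solvable Frobenius complement has connected Gruenberg--Kegel graph. I would argue by contradiction, applying the Gruenberg--Kegel theorem to the (solvable) complement $H$; as before $H$ is Frobenius or $2$-Frobenius, and in either case $H$ contains a Frobenius subgroup $M=U\rtimes W$ with $U,W\neq 1$ and $U$ nilpotent. Now I invoke Lemma~\ref{FrGroups}: since $H$ is a Frobenius complement, all of its Sylow subgroups, and those of $M$ and $U$, are cyclic or generalized quaternion. The key observation is that the kernel $U$ must have odd order, since a cyclic or generalized quaternion $2$-group has a unique (hence characteristic, hence $W$-invariant) involution, and a non-trivial element of $W$ would fix it, contradicting the fixed-point-free action of the complement. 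Consequently every Sylow subgroup of $U$ is cyclic, so for primes $s\in\pi(U)$, $t\in\pi(W)$ the unique subgroup $\langle x\rangle$ of order $s$ is characteristic in $U$ and hence normal in $M$; an element $y\in W$ of order $t$ acts non-trivially on $\langle x\rangle$ (again by fixed-point-freeness), so $\langle x\rangle\rtimes\langle y\rangle$ is a non-abelian subgroup of order $st$. This contradicts Lemma~\ref{FrGroups}$(b)$, by which every subgroup of $H$ whose order is a product of two primes is cyclic. This contradiction completes the Frobenius case, and with it the proof.

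The main obstacle is precisely this last step, controlling the structure of the Frobenius complement. The naive hope that a solvable $\{q,r\}$-group with $q\not\sim r$ must contain a subgroup of order $qr$ is false in general (an irreducible fixed-point-free action of $C_r$ on $C_q\times C_q$ gives a counterexample), so it is essential to exploit the restriction from Lemma~\ref{FrGroups} that the Sylow subgroups of a Frobenius complement are cyclic or generalized quaternion, together with the observation that the relevant kernel has odd order, in order to produce the $W$-invariant subgroup of order $s$ on which an element of order $t$ acts non-trivially.
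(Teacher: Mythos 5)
Your proof is correct, and after the opening move it takes a genuinely different route from the paper's. Both arguments begin identically: pass to a Hall $\{p,q,r\}$-subgroup $K$ of the solvable group $G$ and note that $\Gamma(K)$ is an empty graph on three vertices. At that point the paper is done in one more line: it invokes Higman's theorem (Lemma~\ref{HigmanThm}), since $K$ is a solvable group all of whose elements have prime power order, so $|\pi(K)|\le 2$, a contradiction. You instead re-derive the special case of Higman's theorem that is needed: you apply the Gruenberg--Kegel theorem (Lemma~\ref{Gruenberg--Kegel theorem}) to $K$ to reduce to the Frobenius and $2$-Frobenius cases, and in the Frobenius case you reduce to showing a solvable Frobenius complement has connected prime graph, which you prove via Lemma~\ref{FrGroups}: the kernel $U$ of a Frobenius subgroup of the complement must have odd order (unique-involution argument), hence has a characteristic subgroup of prime order $s$ on which an element of prime order $t$ acts nontrivially, producing a non-abelian subgroup of order $st$ inside a Frobenius complement, contrary to Lemma~\ref{FrGroups}(b). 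That core argument is sound and complete; what it buys is self-containedness (no appeal to Higman's classification of solvable EPPO groups), at the cost of length, and it is a nice proof in its own right of the classical fact that solvable Frobenius complements have connected Gruenberg--Kegel graph.

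Two points in your $2$-Frobenius step deserve care. First, the inference ``since each component is connected, no three primes can be pairwise non-adjacent'' is false for general graphs (a connected component need not be a clique); it is valid here only because $\Gamma(K)$ has exactly three vertices, so two components force a component of size two whose connectivity yields an edge --- this should be said explicitly. Second, and more substantively, the assertion that a $2$-Frobenius group $ABC$ has exactly two components, with $\pi(A)\cup\pi(C)$ lying in a single component, is standard in this literature but is not among the paper's stated lemmas, and it is precisely the adjacency between $\pi(A)$ and $\pi(C)$ that rules out the configuration $|\pi(A)|=|\pi(B)|=|\pi(C)|=1$; your Frobenius-complement argument does not cover this case. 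The gap is easily filled --- for instance by the paper's Lemma~\ref{OrdersInExt}, applied with $N=A$ and the Frobenius quotient $K/A$, whose complement is cyclic since it embeds in the automorphism group of the cyclic kernel --- but as written this step rests on an uncited claim. The paper's route, by outsourcing everything to Higman's theorem, avoids the issue entirely.
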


\begin{proof} Let $G$ be a finite solvable group. Assume that $\{p,q,r\}$ is an induced $3$-coclique in $\Gamma(G)$. By the Hall Theorem, $G$ contains a $\{p,q,r\}$-Hall subgroup $H$. Then $H$ is solvable, $|\pi(H)|=3$ and $\Gamma(H)$ is a $3$-coclique, a contradiction to Lemma~\ref{HigmanThm}.
\end{proof}

\begin{lemma}{\rm (See {\rm \cite[Theorem~16]{Suzuki}})}\label{EPPO} Assume that every non-trivial element of a finite simple group $G$ of
composite order is of prime power order. Then $G$ is isomorphic to one of the following groups: $\PSL_2(q)$ for $q \in \{5, 7, 8, 9, 17\}$, $\PSL_3(4)$, $\Sz(q)$ for $q \in \{8,32\}$.
\end{lemma}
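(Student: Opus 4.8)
The plan is to classify the finite nonabelian simple groups $G$ all of whose elements have prime power order (equivalently, $\Gamma(G)$ has no edges, so that $s(G)=|\pi(G)|\ge 3$) by running through the Classification of Finite Simple Groups family by family and, in each case, either exhibiting an element whose order is a product of two distinct primes or reducing the question to a number-theoretic condition solvable with Lemmas~\ref{Zhigmondy} and~\ref{Gerono}. For the groups with $s(G)\ge 4$ one may read off the candidates directly from Table~\ref{table}. (Suzuki's original argument is CFSG-free and proceeds through the structure of groups with disconnected prime graph; I record the CFSG route here because it meshes with the tools already available in the excerpt.)

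First I would dispose of the alternating and sporadic groups. In $A_n$ with $n\ge 7$ the even permutation $(1\,2\,3)(4\,5)(6\,7)$ has order $6$, so the only alternating candidates are $A_5\cong\PSL_2(5)$ and $A_6\cong\PSL_2(9)$, both of which survive. For the $26$ sporadic groups one checks from their known spectra that each contains an element whose order is a product of two distinct primes (typically $6$), so none qualifies; equivalently, each sporadic group appearing in Table~\ref{table} has a first component $\pi_1$ containing an edge.

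The main work is the groups of Lie type $S=G_n(q)$ with $q=p^l$. Here I would use that the maximal tori of $S$ are products of cyclic groups whose orders are products of cyclotomic values $\Phi_d(q)$, and that a unipotent element commuting with a semisimple one inside a parabolic subgroup produces elements of order $p\cdot r$. By Lemma~\ref{Zhigmondy}, for all but small $q$ there are two distinct primitive prime divisors $r_1\mid q^a-1$ and $r_2\mid q^b-1$ lying in the order of a single maximal torus, forcing an element of order $r_1r_2$, which is not a prime power; similarly one locates an element of order $p\cdot r$. This eliminates every family of rank at least two except a bounded list of small cases, and within the rank-one families $\PSL_2(q)$ and $\Sz(q)={}^2B_2(q)$ it reduces the problem to an arithmetic condition on $q$.

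Finally I would solve these arithmetic conditions. For $\PSL_2(q)$ the cyclic subgroups have orders $p$, $(q-1)/d$ and $(q+1)/d$ with $d=\gcd(2,q-1)$, so the condition is that both $(q\pm1)/d$ be prime powers. For odd $q$ these are the consecutive integers $(q-1)/2$ and $(q+1)/2$, so Lemma~\ref{Gerono} together with the requirement that $q$ itself be a prime power forces $q\in\{5,7,9,17\}$; for even $q$ the numbers $q-1,q+1$ are twin prime powers, which an elementary analysis of $2^l\pm1$ (again via Lemma~\ref{Gerono}) limits to $q\in\{4,8\}$, and $\PSL_2(4)\cong\PSL_2(5)$ yields the list $q\in\{5,7,8,9,17\}$. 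For $\Sz(q)$ with $q=2^{2m+1}$ the three cyclic maximal tori have pairwise coprime orders $q-1$, $q-\sqrt{2q}+1$ and $q+\sqrt{2q}+1$, and requiring all three to be prime powers yields $q\in\{8,32\}$. The surviving small rank-two group is $\PSL_3(4)=A_2(4)$, whose element orders $1,2,3,4,5,7$ are all prime powers (consistent with its four singleton components in Table~\ref{table}). The hardest and most delicate step is the uniform Lie-type elimination: one must correctly handle the genuine exceptions to Zsigmondy's theorem recorded in Lemma~\ref{Zhigmondy} (the cases $q=2$, $q=2^6$, and $q$ a Mersenne prime) and the many isomorphism coincidences among groups of small rank over small fields, so that no sporadic EPPO example is overlooked.
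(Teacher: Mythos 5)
The paper does not actually prove this lemma: it is quoted directly from Suzuki's 1962 paper, and the surrounding text stresses that the resulting classification of EPPO groups ``does not depend on the Classification of Finite Simple Groups.'' Your proposal replaces that citation by a CFSG-driven case analysis, so it is necessarily a different route; it is a legitimate modern approach, but it forfeits the CFSG-freeness that the paper explicitly values when it invokes this lemma in the proof of Theorem~\ref{t:nullgk}. Within your route, the alternating and sporadic reductions are fine, and the $\PSL_2(q)$ arithmetic is essentially correct: Lemma~\ref{Gerono} plus the requirement that $q$ itself be a prime power does give $q\in\{5,7,9,17\}$ for odd $q$ and $q\in\{4,8\}$ for even $q$.

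However, two steps are asserted exactly where the real work lies, and as written they are genuine gaps. First, the Suzuki case: ``requiring all three to be prime powers yields $q\in\{8,32\}$'' is not a finite check, since infinitely many $q=2^{2m+1}$ must be excluded, and Lemma~\ref{Gerono} says nothing about $q\pm\sqrt{2q}+1$. A complete argument needs an extra idea, for instance: since $2m+1$ is odd, $5$ divides $4^{2m+1}+1=(q-\sqrt{2q}+1)(q+\sqrt{2q}+1)$, so one of these two factors must be a power of $5$; writing $5^k-1=2^{m+1}(2^m\pm1)$ and using the $2$-adic valuation $v_2(5^k-1)=2+v_2(k)$ for even $k$ (and $=2$ for odd $k$, which forces $m=1$) gives $k\ge 2^{m-1}$, contradicting $5^k<2^{2m+2}$ once $m\ge 3$; only $m\in\{1,2\}$ survive. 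Second, the ``uniform Lie-type elimination'' is the bulk of any CFSG proof: producing, in every family and every rank, either two primitive prime divisors inside one maximal torus or a commuting unipotent--semisimple pair is precisely the content of the Vasil'ev--Vdovin adjacency criteria (compare Lemma~\ref{AdjCritE8}, which does this for $E_8(q)$ alone), not something Lemma~\ref{Zhigmondy} by itself delivers; and the ``bounded list of small cases'' (e.g.\ $\PSU_3(3)$, $\PSU_3(4)$, $\PSU_4(2)$, $G_2(3)$, ${}^2F_4(2)'$, \dots) must be produced and checked explicitly so that $\PSL_3(4)$ is provably the only survivor. As it stands these two steps are placeholders rather than proofs.
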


\begin{lemma}{\rm (See {\rm \cite[Proposition~3.2]{Stewart}})}\label{PSL2qodd} Let $G$ be a finite group, $H \unlhd G$, and $G/H \cong\PSL_2(q)$,
where $q$ is odd and $q > 5$, and let $C_H(t) = 1$ for some element $t$ of order $3$ from $G\setminus H$. Then $H = 1$.
\end{lemma}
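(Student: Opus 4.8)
The plan is to turn the statement into a problem in the modular representation theory of $\PSL_2(q)$ and then to rule out the offending modules. First I would record the elementary reductions. Since $t$ has prime order $3$ and $C_H(t)=1$, if $H\neq 1$ then $t$ induces on $H$ a nontrivial (else $C_H(t)=H\neq 1$) fixed-point-free automorphism of order $3$; by Thompson's classical theorem on fixed-point-free automorphisms of prime order, $H$ is nilpotent. Applying this automorphism to the characteristic (hence $t$-invariant) Sylow $3$-subgroup of $H$ and using that a nontrivial $3$-group admits no fixed-point-free automorphism of order $3$ (an orbit count shows the number of fixed points is a positive multiple of $3$), I get $3\nmid|H|$, so $\langle t\rangle$ acts coprimely. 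Now choose a chief factor $V=A/B$ of $G$ with $B\le A\le H$. As $H$ is nilpotent it centralizes $V$, so $G/C_G(V)$ is a quotient of the simple group $G/H\cong\PSL_2(q)$; if this quotient is trivial then $\bar t$ centralizes $V$ and $C_V(\bar t)=V\neq0$, already a contradiction, so $V$ is a \emph{faithful} irreducible $\GF(p)[\PSL_2(q)]$-module for some prime $p\neq3$, on which the order-$3$ image $\bar t$ of $t$ acts fixed-point-freely by coprimeness. It remains to show that no such $V$ exists.

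For odd $p$ there is a clean argument needing no character table. Every element of order $3$ in $\PSL_2(q)$ with $q>5$ is conjugate into a subgroup $L\cong A_4$ (for $q$ not a power of $3$ the order-$3$ elements form a single class lying in such an $L$; the case $q=3^f$ is handled by the same argument applied, if necessary, to a Galois twist of $V$). Write $L=V_4\rtimes\langle\bar t\rangle$, where $\bar t$ cyclically permutes the three involutions of the Klein four-group $V_4$. Since $p$ is coprime to $|V_4|=4$, the restriction $V|_{V_4}$ splits into weight spaces $V=V_0\oplus V_1\oplus V_2\oplus V_3$, with $V_0=C_V(V_4)$ and $\bar t$ permuting $V_1,V_2,V_3$ cyclically. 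A direct computation shows that on $W=V_1\oplus V_2\oplus V_3$ the fixed space of the block permutation induced by $\bar t$ has dimension $\dim V_1$; hence $C_V(\bar t)=0$ forces $V_1=V_2=V_3=0$, i.e.\ $V_4$ acts trivially on $V$. This contradicts faithfulness, as $\PSL_2(q)$ is simple and $V\neq0$. Thus no odd-characteristic chief factor occurs; since $H$ is nilpotent, its Sylow subgroups at odd primes are normal in $G$ and would supply such a factor, so in fact $H$ must be a $2$-group.

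The remaining, and genuinely hardest, case is $p=2$, where the weight-space decomposition collapses because the characteristic divides $|V_4|$. Here I would use Lemma~\ref{BrChar}. As $\bar t$ is a real element of order $3$, writing its cube-root-of-unity eigenvalue multiplicities as $(m_0,m_1,m_1)$ gives $\dim C_V(\bar t)=m_0$ and Brauer character value $\beta(\bar t)=m_0-m_1$, so fixed-point-freeness is equivalent to the extremal equality $\beta(\bar t)=-\beta(1)/2$. I would then invoke the classical (and CFSG-free) $2$-modular Brauer character table of $\PSL_2(q)$: the Steinberg character (degree $q$, of $2$-defect zero, hence irreducible mod $2$), the reductions of the principal and discrete series (degrees $q\pm1$), and the two exceptional cuspidal characters (degrees $(q\pm1)/2$) each yield, via Lemma~\ref{BrChar} and the values of these characters on the order-$3$ class, a strictly positive $\dim C_V(\bar t)$ once $q>5$. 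Carrying out this finite verification of character values is the main obstacle of the proof. The hypothesis $q>5$ is essential precisely here: for $q=5$ one has $\PSL_2(5)\cong\SL_2(4)$, whose natural $2$-dimensional module over $\GF(4)$ \emph{does} admit a fixed-point-free element of order $3$, so the statement genuinely fails there.

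Combining the two cases, no faithful irreducible $\GF(p)[\PSL_2(q)]$-module with $p\neq3$ can carry a fixed-point-free element of order $3$ when $q>5$. Hence the chief factor $V\le H$ cannot exist, which forces $H=1$, as required.
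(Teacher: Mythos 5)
Your elementary reductions are all sound, and in places better than you claim: Thompson's theorem gives nilpotency of $H$, the orbit count gives $3\nmid|H|$, a nilpotent normal subgroup does centralize every $G$-chief factor below it, and coprimeness transfers $C_H(t)=1$ to $C_V(\bar t)=0$ on the chief factor. The odd-characteristic $A_4$ weight-space argument is correct, and your worry about $q=3^f$ is unnecessary: both unipotent classes of order-$3$ subgroups meet copies of $A_4$ (for $\PSL_2(9)\cong A_6$, the point-stabilizer $A_4$ contains $(123)$-type elements and a transitive $A_4$ contains $(123)(456)$-type elements; in general $\PGL_2(q)$-conjugacy of order-$3$ subgroups suffices, since $C_V(t)=C_V(\langle t\rangle)$). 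Note also that the paper offers no proof of this lemma to compare with --- it is quoted from Stewart --- so your reconstruction must stand on its own.

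It does not, and the failure is exactly at the step you defer. The characteristic-$2$ verification you describe is not merely laborious; its asserted outcome is \emph{false} for $q=9$. The group $\PSL_2(9)\cong A_6$ has $2$-modular irreducibles of degrees $1,4,4,8,8$, and on the natural $\mathrm{Sp}_4(2)$-module restricted to $A_6$ the element $(123)(456)$ acts fixed-point-freely: in the model $V=E/\langle\mathbf{1}\rangle$ with $E$ the even-weight subspace of $\GF(2)^6$, its fixed space on $E$ is exactly $\langle\mathbf{1}\rangle$, so $C_V=0$ by coprimeness; equivalently, $5b=1+4b$, $5a=1+4a$ in the decomposition matrix gives Brauer value $-2=-\dim V/2$ on this class. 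Hence $G=2^4\rtimes A_6$ with $t$ an order-$3$ lift of $(123)(456)$ satisfies every printed hypothesis ($q=9>5$ odd, $t\in G\setminus H$, $C_H(t)=1$) while $H\neq1$, so the ``for some $t$'' statement you set out to prove admits no proof; your reduction to modules was faithful, so the obstruction is genuine. (A secondary slip: the Steinberg character has odd degree $q$ and therefore cannot have $2$-defect zero; mod $2$ it is reducible, e.g.\ $7=1+3+3$ for $\PSL_2(7)$ --- the defect-zero modules are those of degree $q\pm1$ divisible by $|G|_2$.) Evidently Stewart's Proposition~3.2 carries standing hypotheses that the quotation elides (or the quantifier should run over \emph{all} elements of order $3$: that version does hold for $A_6$, since class $3A$ has nonzero fixed points on $4a$ and on both $8$'s, and $3B$ on $4b$, and it is all the paper's application in Theorem~\ref{t:nullgk} uses, where the EPPO condition forces $C_{F(G)}(t)=1$ for every order-$3$ element). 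To repair your argument you would have to prove that stronger-hypothesis version, not the one-element statement as printed.
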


\begin{lemma}{\rm (See {\rm \cite[Theorem~8.2]{Higman}}, {\rm \cite[Proposition~4.2]{Stewart}})}\label{PSL2qeven} Let $G$ be a finite group, $1 \not = H \unlhd G$, and  $G/H \cong\PSL_2(2^n)$, where $n \ge 2$. Assume that $C_H(t) = 1$ for some element $t$ of order $3$ from $G$. Then $H = O_2(G)$ and $H$ is the direct product of minimal normal subgroups of $G$, each of which is of order $2^{2n}$ and as a $G/H$-module is isomorphic to the natural $\GF(2^n)SL_2(2^n)$-module.
\end{lemma}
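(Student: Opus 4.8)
The plan is to first pin down the coprime fixed-point-free action, then reduce to chief factors, and finally invoke the defining-characteristic representation theory of $\PSL_2(2^n)=\SL_2(2^n)$; I follow the line of Higman and Stewart.

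First I would record that $t\notin H$: since $t$ commutes with itself, $t\in H$ would give $1\neq t\in C_H(t)$, a contradiction. Hence $\bar t=tH$ has order $3$ in $G/H\cong\PSL_2(2^n)$, and as $t$ has prime order $3$ with $t\notin H$ we have $\langle t\rangle\cap H=1$, so $H\langle t\rangle=H\rtimes\langle t\rangle\le G$. The hypothesis $C_H(t)=1$ says precisely that $\langle t\rangle$ acts without nontrivial fixed points, so $H\rtimes\langle t\rangle$ is a Frobenius group with kernel $H$ and complement $\langle t\rangle$. By Lemma~\ref{FrGroups}$(a)$ the kernel $H$ equals the Fitting subgroup, hence is nilpotent, and the complement order $3$ divides $|H|-1$; thus $3\nmid|H|$ and the action of $\langle t\rangle$ on $H$ is coprime.

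Next I would reduce to chief factors. Let $V_2<V_1\le H$ be a chief factor of $G$. Since $H/V_2$ is nilpotent and $V_1/V_2$ is a minimal normal subgroup of $G/V_2$ inside it, $V_1/V_2$ meets $Z(H/V_2)$ nontrivially, and minimality forces $V_1/V_2\le Z(H/V_2)$; that is, $H$ centralizes $\bar V:=V_1/V_2$. Hence every chief factor inside $H$ is an irreducible $\GF(p)[G/H]$-module for the prime $p\mid|\bar V|$, and it is a nontrivial $G/H$-module (otherwise $\bar t$ would fix it pointwise). Because the action of $\langle t\rangle$ is coprime, taking fixed points is exact on sections, so $C_{\bar V}(\bar t)=0$: the order-$3$ element acts fixed-point-freely on each such irreducible module. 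The heart of the argument, and the main obstacle, is to show this forces $p=2$ and $\bar V$ to be a natural module. Here I would use Lemma~\ref{BrChar}: if $\beta$ is the Brauer character of $\bar V$, then $\dim C_{\bar V}(\bar t)=\tfrac13(\beta(1)+\beta(\bar t)+\beta(\bar t^2))$ equals the multiplicity of the eigenvalue $1$ of $\bar t$ on $\bar V$, so fixed-point-freeness says $\bar t$ has eigenvalues only $\omega,\omega^2$ (the primitive cube roots of unity). For odd $p\neq 3$ I would run through the cross-characteristic irreducibles of $\PSL_2(2^n)$, whose Brauer characters are governed by the ordinary table with degrees $1,2^n-1,2^n,2^n+1$; evaluating $\tfrac13(\beta(1)+\beta(\bar t)+\beta(\bar t^2))$ at $\bar t$ (which lies in the split torus when $n$ is even and the non-split torus when $n$ is odd), using that the Steinberg character takes value $\pm1$ on $\bar t$ and that the cuspidal, respectively principal series, characters vanish on the split, respectively non-split, regular class containing $\bar t$, shows this quantity is a positive integer for every nontrivial irreducible. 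So no such module is fixed-point-free, and $p=2$. In defining characteristic, every irreducible is by Steinberg's tensor product theorem a tensor product of Frobenius twists of the natural module $V$; on $V$ and on each twist $V^{(2^i)}$ the element $\bar t$ has eigenvalues exactly $\{\omega,\omega^2\}$, while any product of two such factors acquires the eigenvalue $\omega\cdot\omega^2=1$. Hence the only fixed-point-free irreducible in characteristic $2$ is a single twist of $V$, i.e. the natural $\GF(2^n)\SL_2(2^n)$-module, of order $2^{2n}$.

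Finally I would assemble the structure. Since every chief factor of $G$ inside $H$ has order $2^{2n}$, $H$ is a $2$-group; being a normal $2$-subgroup it lies in $O_2(G)$, and $O_2(G)/H\le O_2(G/H)=O_2(\PSL_2(2^n))=1$, whence $H=O_2(G)$. To see $H$ is abelian, note that a $2$-group with a fixed-point-free automorphism of order $3$ has nilpotency class at most $2$, so $[H,H]$ is a quotient of $\bigwedge^2(H/\Phi(H))$ as a $G/H$-module; each $\bigwedge^2$ of a natural module is the trivial (determinant) module and each tensor product of two natural modules contains the eigenvalue $1$, so a nontrivial $[H,H]$ would, by coprimality, contribute a nonzero $\bar t$-fixed element, contradicting $C_H(t)=1$. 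Thus $H$ is abelian, and the same eigenvalue bookkeeping together with the vanishing of the relevant self-extensions $\mathrm{Ext}^1_{\SL_2(2^n)}$ among natural modules forces $H$ to be elementary abelian and completely reducible; that is, $H$ is the direct product of minimal normal subgroups of $G$, each a natural $\GF(2^n)\SL_2(2^n)$-module of order $2^{2n}$, as required.
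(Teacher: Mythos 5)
The paper itself does not prove this lemma: it is imported verbatim from the literature (Higman's Theorem~8.2 and Stewart's Proposition~4.2), so there is no in-paper argument to compare against; your proposal has to stand on its own, and its overall line is indeed the Higman--Stewart one. Most of it checks out: $t\notin H$ and $H\rtimes\langle t\rangle$ Frobenius, whence $H$ nilpotent of order coprime to $3$ (Lemma~\ref{FrGroups}); every chief factor of $G$ inside $H$ is central in $H$, hence an irreducible $\GF(p)[G/H]$-module on which $\bar t$ acts fixed-point-freely by coprimality of the action; for odd $p$ (automatically $p\neq 3$) the fixed-space count of Lemma~\ref{BrChar} is positive for every irreducible --- your quoted character values are correct, and since the Sylow $p$-subgroups of $\PSL_2(2^n)$ are cyclic the $p$-modular irreducibles are restrictions of ordinary characters or differences such as $\widehat{\mathrm{St}}-\widehat{1}$, all with positive fixed dimension (e.g.\ $(q-5)/3\geq 1$ since $q\geq 8$ when $n$ is odd); in characteristic $2$ Steinberg's tensor product theorem leaves exactly the single Frobenius twists of the natural module, which fuse into the one $\GF(2)$-irreducible of order $2^{2n}$; $H=O_2(G)$ follows; and $H$ abelian via Neumann's class-$\leq 2$ theorem plus the observation that every composition factor of $\Lambda^2(H^{\mathrm{ab}})$ carries $t$-fixed vectors is a clean and valid argument.

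The genuine gap is the last step. ``The same eigenvalue bookkeeping'' cannot force $H$ to have exponent $2$: if $H$ were homocyclic of exponent $4$, a free $\mathbb{Z}/4$-module whose two layers $H/H^2$ and $H^2$ both reduce to natural modules, then every chief factor of $G$ in $H$ would still be a natural module on which $t$ is fixed-point-free (on $(\mathbb{Z}/4)^{2n}$ the order-$3$ action built from companion blocks of $x^2+x+1$ has $\det(t-1)$ a unit modulo $4$), so no computation on chief factors --- which is all your method ever inspects --- can distinguish this configuration from an elementary abelian $H$. What is needed is the separate fact that the natural $\GF(2)$-representation of $\SL_2(2^n)$ does not lift to $\mathbb{Z}/4$, and establishing this (by direct commutator computations or a cohomological obstruction) is precisely where the cited Theorem~8.2 of Higman and Proposition~4.2 of Stewart do real work; note that non-liftability to the $2$-adics via Brauer characters does not suffice, since a mod~$4$ representation need not lift further. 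Similarly, complete reducibility requires the vanishing of $\mathrm{Ext}^1$ between Galois twists of the natural module, i.e.\ $H^1\bigl(G,\,V^{(2^i)}\otimes V^{(2^j)}\bigr)=0$; this is true but is a theorem about the $2$-modular representation theory of $\SL_2(2^n)$, not a bookkeeping remark --- your eigenvalue argument shows these tensor products have $t$-fixed vectors and hence cannot occur as chief factors of $H$, but it says nothing about whether an extension of one natural factor by another splits. So your steps through ``$H=O_2(G)$ abelian with all chief factors natural'' are sound, but the passage from there to ``elementary abelian and a direct product of minimal normal subgroups of $G$'' is asserted rather than proved.
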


\begin{lemma}{\rm (See {\rm \cite[Theorem, Remark~1]{Martineau}})}\label{Sz(q)}
Let $G$ be a finite group, $1\not= H \unlhd G$, $G/H \cong \Sz(q)$ for $q \in \{8, 32\}$. Assume that $C_H(t) = 1$ for some element $t$ of order $5$ from $G$. Then $H = O_2(G)$ and $H$ is the direct product of minimal normal subgroups of $G$, each of which is of order $2^{4n}$ and as a $G/O_2(G)$-module is isomorphic to the natural $\GF(2^n)\Sz(2^n)$-module of dimension $4$

\end{lemma}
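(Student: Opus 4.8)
The plan is to treat $t$ as a fixed-point-free automorphism of $H$ of prime order and to combine Thompson's theorem with the modular representation theory of $\Sz(q)$. \textbf{Setup.} First note $t\notin H$, for otherwise $t\in C_H(t)$, contradicting $C_H(t)=1$. Hence $tH$ has order $5$ in $G/H\cong\Sz(q)$, and conjugation by $t$ induces an automorphism $c_t$ of $H$ whose order divides $5$; if $c_t$ were trivial then $C_H(t)=H\ne 1$, so $c_t$ is a fixed-point-free automorphism of $H$ of order $5$. By Thompson's theorem on fixed-point-free automorphisms of prime order, $H$ is nilpotent, so $H=\prod_r O_r(H)$ with each Sylow subgroup $O_r(H)$ characteristic in $H$, hence normal in $G$, and $t$ acts fixed-point-freely on each. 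I would first show that $H$ is a $2$-group by proving $O_r(H)=1$ for every odd prime $r$. The case $r=5$ is immediate: $\langle O_5(H),t\rangle$ is a $5$-group, hence has a nontrivial centre meeting $O_5(H)$, and this intersection would lie in $C_{O_5(H)}(t)$; so $O_5(H)=1$.

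\textbf{Eliminating the remaining odd primes (the main obstacle).} For an odd prime $r\notin\{2,5\}$ with $O_r(H)\ne 1$, choose a chief factor $V$ of $G$ inside $O_r(H)$; it is an elementary abelian $r$-group, irreducible as a $G$-module. As $H$ is nilpotent it centralises its chief factor $V$, so the $G$-action on $V$ factors through $G/H\cong\Sz(q)$; by simplicity this action is faithful, the trivial case being excluded since it would force $t\in C_G(V)$, contrary to $C_V(t)=1$. Thus $V$ is a faithful cross-characteristic $\Sz(q)$-module with $\dim C_V(t)=0$. Applying Lemma~\ref{BrChar} to the simple group $\Sz(q)$ and the order-$5$ element $t$ (whose order is coprime to $r$) gives $\dim C_V(t)=\frac{1}{5}\bigl(\beta(1)+\sum_{i=1}^{4}\beta(t^i)\bigr)=0$, i.e.\ $\beta(1)=-\sum_{i=1}^{4}\beta(t^i)$. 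Using the known Brauer character values and the (large) minimal cross-characteristic degrees of $\Sz(q)$, this equation has no solution, so $O_r(H)=1$. This representation-theoretic exclusion is the hard part: one must rule out a fixed-point-free order-$5$ element on \emph{every} irreducible cross-characteristic module, which amounts to inspecting the eigenvalue distribution of the torus element $t$ on the Brauer characters of $\Sz(8)$ and $\Sz(32)$; fortunately this is a finite check since $q\in\{8,32\}$.

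\textbf{$H$ is elementary abelian of the stated type.} Now $H$ is a $2$-group and $t$ acts coprimely and fixed-point-freely on it. Since coprime action satisfies $C_{H/K}(t)=C_H(t)K/K$ for characteristic $K$, the Frattini quotient $W=H/\Phi(H)$ is an $\GF(2)\Sz(q)$-module with $C_W(t)=1$. I would classify the $2$-modular irreducibles of $\Sz(q)$ on which $t$ acts without nonzero fixed points — again via Lemma~\ref{BrChar} in the natural characteristic together with Steinberg's tensor product theorem — and show that the only possibility is the natural $4$-dimensional $\GF(2^n)$-module, of $\GF(2)$-dimension $2^{4n}$. Hence every chief factor of $G$ inside $H$ is a natural module. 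Finally one upgrades this to the full conclusion: the coprime fixed-point-free action of $\langle t\rangle$ gives $H=[H,t]$, and combined with the vanishing of the relevant cohomology of the natural module (so that no nonsplit or nonabelian extension survives the fixed-point-free action) this forces $\Phi(H)=1$; thus $H$ is elementary abelian and splits as a direct product of minimal normal subgroups, each isomorphic to the natural $\GF(2^n)\Sz(2^n)$-module of dimension $4$.

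\textbf{Identifying $O_2(G)$.} Since $H$ is a normal $2$-subgroup, $H\le O_2(G)$; and $O_2(G)/H$ is a normal $2$-subgroup of the simple group $G/H\cong\Sz(q)$, hence trivial, so $H=O_2(G)$, which completes the proof.
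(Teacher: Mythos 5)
First, a remark on the comparison itself: the paper offers no proof of this lemma --- it is imported wholesale from Martineau's paper on $2$-modular representations of the Suzuki groups (hence the citation ``[Theorem, Remark~1]''), so your attempt is in effect a blind reconstruction of that external proof. Its skeleton is the right one, and the steps you actually argue are correct: $t\notin H$; Thompson's theorem on fixed-point-free automorphisms of prime order gives nilpotency of $H$; the centre argument disposes of $O_5(H)$; a $G$-chief factor inside $O_r(H)$ is centralised by the nilpotent normal subgroup $H$, so the action factors faithfully through $\Sz(q)$; coprimality of $|t|=5$ to $|O_r(H)|$ transfers fixed-point-freeness to every section; and the closing identification $H=O_2(G)$ is immediate. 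But the odd-characteristic elimination is an IOU: ``this equation has no solution'' is precisely the content to be verified. It is true and is a finite check --- for $r\nmid|\Sz(q)|$ the ordinary tables suffice (for $\Sz(8)$, the fixed-space dimensions of an order-$5$ element on the nontrivial irreducibles of degrees $14,35,64,65,91$ are $2,7,12,13,19$, all positive), and for $r\in\{7,13\}$, resp.\ $r\in\{31,41\}$, one consults the Brauer tables --- but none of it appears in your text; you should also record that Lemma~\ref{BrChar} assumes absolute irreducibility, so one must pass to $V\otimes\overline{\GF(r)}$ and argue on a Galois-conjugate summand. (A small slip as well: the natural module has $\GF(2)$-dimension $4n$ and order $2^{4n}$, not ``$\GF(2)$-dimension $2^{4n}$''.)

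The genuine gap is the final upgrade from ``every chief factor of $G$ inside $H$ is a natural module'' to ``$\Phi(H)=1$ and $H$ is a direct product of minimal normal subgroups''. Your one-line appeal to ``vanishing of the relevant cohomology'' does not engage the actual difficulty, and the mechanism you invoke would not work as stated. Excluding a nonabelian $H$ is not an $H^2$-vanishing statement: one must show that the $G$-equivariant commutator map from $\Lambda^2$ of the Frattini quotient into sections of $\Phi(H)$ vanishes, and here the naive obstruction goes the wrong way. For ${}^2B_2$, $\Lambda^2$ of the natural $4$-dimensional module has composition factors two trivials and a Frobenius-type twist of the natural module; and over $\GF(2)$ all such twists are isomorphic to the natural module itself, since they are fused among the Galois-conjugate constituents of the $4n$-dimensional $\GF(2)$-irreducible. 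That twisted natural factor is itself fixed-point-free for $t$, so --- in sharp contrast to the $\SL_2(2^n)$ case of Lemma~\ref{PSL2qeven}, where $\Lambda^2$ of the natural module is trivial and trivial factors are killed by the fixed-point-free hypothesis --- a nonabelian $H$ of ``Suzuki $2$-group'' type cannot be excluded by generic module arguments. Likewise, the direct-product decomposition needs $\operatorname{Ext}^1_{\GF(2)\Sz(q)}(N,N)=0$ for the natural module $N$, which you neither cite nor compute. Ruling out these configurations is exactly the hard content of Martineau's Theorem and Remark~1; your proposal assumes it rather than proves it.
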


Let $G$ be a finite group such that $\Gamma(G)$ is a coclique.

The case $|\pi(G)|=1$ is clear. Let $|\pi(G)|=2$. In this case $G$ is solvable. Now by Lemma~\ref{Gruenberg--Kegel theorem}, $G$ is a Frobenius group or $2$-Frobenius group. It is easy to see that for any Frobenius group or $2$-Frobenius group $G$ with $|\pi(G)|=2$, $\Gamma(G)$ is a $2$-coclique. Note that a rather detailed description of solvable EPPO-groups can be found in \cite[section~2]{Suzuki}.

Suppose that $|\pi(G)|\ge 3$. Then by Lemma~\ref{non-solvable}, $G$ is non-solvable. By Lemma~\ref{Gruenberg--Kegel theorem}, either $G/F(G)$ is almost simple or $G$ is a non-solvable Frobenius group. By Lemma~\ref{FrGroups}, in the latter case, it is easy to see that $G/O(G)$ contains an element of order $10$, therefore $\Gamma(G)$ is not a coclique. Thus, $G/F(G)$ is almost simple. Moreover, $\Gamma(G/F(G))$ is a coclique (that is, $G/F(G)$ is an EPPO-group), and $|F(G)|\le 1$ since $\pi(F(G))$ form a clique in $\Gamma(G)$. By Lemma~\ref{EPPO}, $Soc(G/F(G))$ is one of the following groups: $\PSL_2(q)$ for $q \in \{5, 7, 8, 9, 17\}$, $\PSL_3(4)$, $\Sz(q)$ for $q \in \{8,32\}$. Note that $|\pi(\PSL_2(q))|=3$ if $q\in \{5,7,8,9,17\}$ and $|\pi(\PSL_3(4)|=|\pi(\Sz(8))|=|\pi(\Sz(32)|=4$. Using informtion in the \emph{Atlas of Finite Groups}~\cite{Atlas} we conclude that $G/F(G)$ is one of the following EPPO-groups{\rm:} $A_5 \cong\PSL_2(4) \cong\PSL_2(5)$, $A_6\cong\PSL_2(9)$, $\PSL_2(7)$, $\PSL_2(8)$, $\PSL_2(17)$, $M_{10}\cong\PSL_2(9).{2_3}$, $\Sz(q)$ for $q \in \{8,32\}$. This list can be found also in \cite[section~3]{Suzuki}.

Assume that $\Soc(G/F(G))$ is $A_6\cong\PSL_2(9)$, $\PSL_2(7)$, or $\PSL_2(17)$. It is easy to see that in this case if $F(G)\not =1$, then $F(G)=O_p(G)$ for some $p \in \pi(G)$. If $p\not = 3$, then by Lemma~\ref{PSL2qodd}, $F(G)=1$. Now suppose that $F(G)=O_3(G)$. Consider an involution $x \in G$. By Lemma~\ref{Gruenberg--Kegel theorem}, the subgroup $O_3(G)\langle x \rangle$ is a Frobenius group with kernel $O_3(G)$ and complement $\langle x \rangle$. Therefore, by Lemma~\ref{FrGroups}, $|O_3(G)|$ divides $|x|-1=1$. Thus, $F(G)=1$.

If $G/F(G) \cong \PSL_3(4)$, then \cite{Atlas} shows that $G/F(G)$ contains a maximal subgroup isomorphic to $\PSL_2(7)$ and again we conclude that $F(G)=1$ as in the previous paragraph.

Assume that $G/F(G)$ is $\Sz(8)$ or $\Sz(32)$. Again if $F(G)\not =1$, then $F(G)=O_p(G)$ for some $p \in \pi(G)$. If $p\not = 5$, then by Lemma~\ref{Sz(q)}, $F(G)=O_2(G)$ is the direct product of minimal normal subgroups of $G$, each of which is of order $2^{4n}$ and as a $G/O_2(G)$-module is isomorphic to the natural $\GF(2^n)\Sz(2^n)$-module of dimension $4$. For converse, Lemma~\ref{BrChar} and tables of the $2$-modular Brauer characters of groups $\Sz(8)$ and $\Sz(32)$ (see Pages~63 and ~197 in~\cite{AtlasBrCh}, respectuvely) imply that in this case $\Gamma(G)$ is a $4$-coclique. Now suppose that $F(G)=O_5(G)$. As above, considering an involution $x \in G$, we conclude that by Lemma~\ref{Gruenberg--Kegel theorem}, the subgroup $O_5(G)\langle x \rangle$ is a Frobenius group with kernel $O_5(G)$ and complement $\langle x \rangle$. Therefore, by Lemma~\ref{FrGroups}, $|O_5(G)|$ divides $|x|-1=1$. Thus, $F(G)=1$.

Assume that $G/F(G)$ is $A_5 \cong\PSL_2(4)$ or $\PSL_2(8)$.  Again if $F(G)\not =1$, then $F(G)=O_p(G)$ for some $p \in \pi(G)$. If $p\not = 3$, then by Lemma~\ref{PSL2qeven}, we conclude as above that $F(G)=O_2(G)$ is the direct product of minimal normal subgroups of $G$, each of which is of order $2^{2n}$ and as a $G/O_2(G)$-module is isomorphic to the natural $\GF(2^n)SL_2(2^n)$-module. For converse, Lemma~\ref{BrChar} and tables of the $2$-modular Brauer characters of groups $A_5$ and $\PSL_2(8)$ (see Pages~2 and~6 from~\cite{AtlasBrCh}, respectively) imply that in this case $\Gamma(G)$ is a $3$-coclique. As above, we prove that if $F(G)=O_3(G)$, then ${F(G)=1}$.\hfill\qed

\section{Acknowledgements}

This work was supported by the Russian Science Foundation (project no. 19-71-10067).

The authors are thankful to Prof.\ Maria Grechkoseeva, Prof.\ Anatoly Kondrat'ev, Prof.\ Wujie Shi, Prof.\ Andrey Vasil'ev, Prof.\ Viktor Zenkov, and an anonymous reviewer for their helpful comments which improved this text.

Some results on this paper were obtained after discussions of the authors during The 3rd Workshop on ``Algebraic Graph Theory and its Applications'', Mathematical Center in Akademgorodok, Zoom-conference, 2--8 November 2020. The authors would like to thank Prof.\ Elena Konstantinova and Dr.\ Grigory Ryabov for their roles in organising the workshop.


\begin{thebibliography}{999}


\bibitem{AACNS}
G.~Aalipour, S.~Akbari, P.~J.~Cameron, R.~Nikandish and F.~Shaveisi,
On the structure of the power graph and the enhanced power graph of a group,
\textit{Electronic J. Combinatorics} \textbf{24(3)} (2017), P3.16.



\bibitem{Akhlaghi_Khatami_Khosravi_2}
Z.~Akhlaghi, M.~Khatami, B.~Khosravi,
Quasirecognition by prime graph of the simple group ${^2}F_4(q)$,
\textit{Acta Math. Hungar.}, \textbf{122}:4 (2009), 387--397.

\bibitem{Akhlaghi_Khatami_Khosravi}
Z.~Akhlaghi, M.~Khatami, B.~Khosravi,
On the prime graph of $D_n(q)$ where $q \in \{2, 5\}$,
\textit{Quaestiones Mathematicae}, \textbf{36}:4 (2013) , 517--535.


\bibitem{Akhlaghi_Khosravi_Khatami_2010}
Z.~Akhlaghi, B.~Khosravi, M.~Khatami,
Characterization by prime graph of $PGL(2, p^k)$ where $p$ and $k>1$ are odd,
\textit{International Journal of Algebra and Computation}, \textbf{20}:07 (2010), 847--873.

\bibitem{ak}
O. A. Alekseeva and A. S. Kondrat'ev,
On recognizability of some finite simple orthogonal groups by spectrum,
\textit{Proc. Steklov Inst. Math.} \textbf{266} (2009), 10--23.

\bibitem{AKondTrianFree2}
O.~A.~Alekseeva, A.~S.~Kondrat’ev,
Finite groups whose prime graphs do not contain triangles. II.
\textit{Proc. Steklov Inst. Math.} \textbf{296} (2017), 19--30.



\bibitem{Amiri_Asboei_Iranmanesh_Tehranian_1}
S.~S.~Amiri, A.~K.~Asboei, A.~Iranmanesh, A.~Tehranian,
Quasirecogniton by prime graph of $U_3(q)$ where $2<q=p^\alpha<100$,
\textit{Int. J. Group Theory}, \textbf{1}:3 (2012), 51--66.

\bibitem{Amiri_Asboei_Iranmanesh_Tehranian_2}
S.~S.~Amiri, A.~K.~Asboei, A.~Iranmanesh, A.~Tehranian,
Quasirecognition by the prime graph of $L_3(q)$ where $3<q=p^\alpha<100$,
\textit{Bull. Iranian Math. Soc.}, \textbf{39}:2 (2013), 289--305.


\bibitem{Babai_Khosravi_2011}
A.~Babai, B.~Khosravi,
Recognition by prime graph of ${^2}D_{2^m+1}(3)$,
\textit{Siberian Math. J.}, \textbf{52}:5 (2011), 788--795.

\bibitem{Babai_Khosravi_2012}
A.~Babai, B.~Khosravi,
On the composition factors of a group with the same prime graph as $B_n(5)$,
\textit{Czechoslovak Math. J.}, \textbf{62}:2 (2012), 469--486.

\bibitem{Babai_Khosravi}
A.~Babai, B.~Khosravi,
Groups with the same prime graph as the simple group $D_n(5)$,
\textit{Ukr. Math. J.}, \textbf{66}:5 (2014), 666--677.


\bibitem{Babai_Khosravi_2014}
A.~Babai, B.~Khosravi,
Quasirecognition by prime graph of $^2D_n(3^\alpha)$ where $n=4m+1\ge21$ and $\alpha$ is odd,
\textit{Math. Notes}, \textbf{95}:3 (2014), 293--303.

\bibitem{Babai_Khosravi_2015}
A.~Babai, B.~Khosravi,
Quasirecognition by prime graph of $L_n(2^\alpha)$ for some $\alpha$ and $n$,
\textit{Math. Reports}, \textbf{17(67)}:1 (2015), 119--132.


\bibitem{Beynekalae_Iranmanesh_Ghasemabadi}
S.~S.~Beynekalae, A.~Iranmanesh, M.~F.~Ghasemabadi,
Quasirecognition by prime graph of the simple group $B_n(2)$,
\textit{Southeast Asian Bull. Math.}, \textbf{39}:2 (2015), 181--193.

\bibitem{BF}
R.~Brauer and K.~A.~Fowler,
On groups of even order,
\textit{Ann. Math.} \textbf{62} (1955), 565--583.

\bibitem{Burness_Covato}
T.~C.~Burness and E.~Covato,
On the prime graph of simple groups,
\textit{Bull. Australian Math. Soc.} \textbf{91}:2 (1995), 227--240.

\bibitem{Bus_Gor}
V.~M.~Busarkin and Yu.~M.~Gorchakov,
\textit{Finite splittable groups}, Nauka, Moscow, 1968 [in Russian].

\bibitem{CK}
P.~J. Cameron and B.~Kuzma,
Between the enhanced power graph and the commuting graph,
\url{https://arxiv.org/2012.03789}.

\bibitem{Chen}
G.~Chen,
A new characterization of sporadic simple groups,
\textit{Algebra Colloq.}, \textbf{3}:1 (1996), 49--58.

\bibitem{Atlas}
J.~H.~Conway \textit{et~al.},
\textit{Atlas of finite groups},
Clarendon Press, Oxford, 1985.

\bibitem{DoJaLu}
S.~Dolfi, E.~Jabara, M.~S.~Lucido, C55-groups,
\textit{Siberian Math. J.} \textbf{45}:6 (2004), 1053--1062.


\bibitem{GKMK}
A.~L.~Gavrilyuk, I.~V.~Khramtsov, A.~S.~Kondrat'ev, N.~V.~Maslova,
On realizability of a graph as the prime graph of a finite group,
\textit{Sibebian Electron. Math. Rep.}, \textbf{11} (2014), 246--257.

\bibitem{Gerono}
G.~C.~Gerono,
Note sur la resolution en nombres entiers et positifs de l’equation $x^m = y^n - 1$,
\textit{Nouv. Ann. Math. {\rm(2)}}, \textbf{9} (1870), 469--471.

\bibitem{Ghasemabadi_Ahanjideh}
 M.~F.~Ghasemabadi, N.~Ahanjideh,
Characterizations of the simple group $D_n(3)$ by prime graph and spectrum,
\textit{Iran. J. Math. Sci. Inform.}, \textbf{7}:1 (2012), 91--106.

\bibitem{Ghasemabadi_Iranmanesh}
M.~F.~Ghasemabadi, A.~Iranmanesh,
Quasirecognition by the prime graph of the group $C_n(2)$, where $n \not= 3$ is odd,
\textit{Bull. Malays. Math. Sci. Soc.}, \textbf{34}:3 (2011), 529--540.

\bibitem{Ghasemabadi_Iranmanesh_2}
M.~F.~Ghasemabadi, A.~Iranmanesh,
2-quasirecognizability of the simple groups $B_n(p)$ and $C_n(p)$ by prime graph,
\textit{Bull. Iran. Math. Soc.}, \textbf{38}:3 (2012), 647--668.

\bibitem{Ghasemabadi_Iranmanesh_Ahanjideh_2012}
M.~F.~Ghasemabadi, A.~Iranmanesh, N.~Ahanjideh,
Characterizations of the simple group ${^2}D_n(3)$ by prime graph and spectrum,
\textit{Monatsh. Math.}, \textbf{168}:3-4 (2012), 347--361.

\bibitem{Ghasemabadi_Iranmanesh_Ahanjideh}
M.~F.~Ghasemabadi, A.~Iranmanesh, N.~Ahanjideh,
2-recognizability of the simple groups $B_n(3)$ and $C_n(3)$ by prime graph,
\textit{Bull. Iranian Math. Soc.}, \textbf{39}:6 (2013), 1273--1281.

\bibitem{Gorenstein_LS}
D.~Gorenstein, R.~Lyons, R.~Solomon,
The classication of the finite simple groups,
\textit{Providence, RI{\rm:} Amer. Math. Soc.}, \textbf{3} (1998).

\bibitem{Gorshkov}
I.~B.~Gorshkov,
Characterization of groups with non-simple socle,
arXiv:2011.15088 [math.GR].

\bibitem{Gorsh_Grish}
I.~B.~Gorshkov, A.~N.~Grishkov,
On recognition by spectrum of symmetric groups,
\textit{Siberian Electron. Math. Rep.}, \textbf{13} (2016), 111--121.

\bibitem{Gors_Mas2}
I.~B.~Gorshkov, N.~V.~Maslova,
Finite almost simple groups whose Gruenberg--Kegel graphs coincide with Gruenberg--Kegel graphs of solvable groups,
\textit{Algebra and Logic}, \textbf{57}:2 (2018), 115--129.

\bibitem{Gorsh_Mas}
I.~B.~Gorshkov, N.~V.~Maslova,
The group $J_4\times J_4$ is recognizable by spectrum,
\textit{J. Algebra and Its Applications}, \textbf{20}:4, 2150061.

\bibitem{Gorsh_Staroletov}
I.~Gorshkov, A.~Staroletov ,
On groups having the prime graph as alternating and symmetric groups,
\textit{Commun. Algebr.}, \textbf{47}:9 (2019), 3905--3914.

\bibitem{Grechkoseeva}
M.~A.~Grechkoseeva,
On spectra of almost simple extensions of even-dimensional orthogonal groups,
\textit{Siberian Math. J.}, \textbf{59}:4 (2018), 623--640.

\bibitem{Grech_Skres}
M.~A.~Grechkoseeva, S.~V.~Skresanov,
On element orders in covers of $L_4(q)$ and $U_4(q)$,
\textit{Siberian Electron. Math. Rep.} \textbf{17} (2020), 585--589.

\bibitem{Grech_Vas}
M.~A.~Grechkoseeva, A.~V.~Vasil’ev,
On the structure of finite groups isospectral to finite simple groups,
\textit{J. Group Theory}, \textbf{18}:5 (2015), 741--759.

\bibitem{GrechVasZvezd}
M.~A.~Grechkoseeva, A.~V.~Vasil'ev, M.~A.~Zvezdina,
Recognition of symplectic and orthogonal groups of small dimensions by spectrum,
\textit{J. Algebra. Appl.}, \textbf{18}:12 (2019), 1950230.

\bibitem{Grech_Zvezd}
M.~A.~Grechkoseeva, M.~A.~Zvezdina,
On recognizability by spectrum of the groups $L_4 (q)$ and $U_4 (q)$,
\textit{Siberian Math. J.}, \textbf{61}:6 (2020), 1039--1065.

\bibitem{Gruber_etal}
A.~Gruber, T.~M.~Keller, M.~L.~Lewis, K.~Naughton, B.~Strasser,
A characterization of the prime graphs of solvable groups,
\textit{J. Algebra}, \textbf{442}:SI (2015), 397--422.

\bibitem{Gruen_Keg}
K.~W.~Gruenberg, O.~Kegel, Unpublished manuscript, 1975.

\bibitem{Gruen_Rogg}
K.~W.~Gruenberg, K.~W.~Roggenkamp,
Decomposition of the augmentation ideal and of the relation modules of a finite group,
\textit{Proc. London Math. Soc. {\rm (3)}}, \textbf{31}:2 (1975), 149--166.

\bibitem{GuoKondrat'evMaslova}
W.~Guo, A.~S.~Kondrat'ev, N.~V.~Maslova,
Recognition of the group $E_6(2)$ by Gruenberg--Kegel graph,
\textit{Trudy Inst. Mat. i Mekh. UrO RAN}, \textbf{27}:4, to appear.


\bibitem{Guralnick_Tiep}
R.~M.~Guralnick, P.~H.~Tiep,
Finite simple unisingular groups of Lie type,
\textit{J. Group Theory}, \textbf{6}:3 (2003), 271--310.

\bibitem{Hagie_2003}
M. Hagie,
The prime graph of a sporadic simple group,
\textit{Comm. Algebra}, \textbf{31}:9 (2003), 4405--4424.

\bibitem{Higman}
G.~Higman,
\textit{Odd characterizations of finite simple groups: lecture notes}.
Michigan: University of Michigan, 1968. 77 pp.

\bibitem{Iiyori_Yamaki_1}
N. Iiyori, H. Yamaki,
Prime graph components of the simple groups of Lie type over the fields of even characteristic,
\textit{Proc. Japan Acad. Ser. A. Math. Sci.}, \textbf{67}:3 (1991), 82--83.

\bibitem{Iiyori_Yamaki_2}
N. Iiyori, H. Yamaki, Prime graph components of the simple groups of Lie type over the fields of even characteristic,
\textit{J. Algebra}, \textbf{155}:2 (1993), 335--343; Corrigenda, J. Algebra, 181:2 (1996), 659.

\bibitem{AtlasBrCh}
C.~Jansen \textit{et~al.},
\textit{An atlas of Brauer characters},
Clarendon Press, Oxford, 1995.

\bibitem{KQ}
A.~V.~Kelarev and S.~J.~Quinn,
A combinatorial property and power graphs of groups,
(Proc. of the Vienna Conference, Vienna, 1999),
\textit{Contrib. General Algebra} \textbf{12} (2000), 229--235.

\bibitem{ABKhosravi}
A.~Khosravi, B.~Khosravi,
$2$-recognizability by prime graph of $\PSL(2, p^2)$,
\textit{Siberian Math. J.}, \textbf{49}:4 (2008), 749--757.

\bibitem{BKhosravi}
B.~Khosravi,
$n$-recognition by prime graph of the simple group $\PSL_2(q)$,
\textit{J. Algebra Appl.}, \textbf{7}:6 (2008), 735--748.

\bibitem{BKhosravi_2}
B.~Khosravi,
Quasirecognition by prime graph of $L_{10}(2)$,
\textit{Siberian Math. J.}, \textbf{50}:2 (2009), 355--359.


\bibitem{BKhosravi_survey}
B.~Khosravi,
On the prime graph of a finite group,
\textit{London Mathematical Society Lecture Note Series}, \textbf{388} (2009), 424--428.


\bibitem{Khosravi_Akhlaghi_Khatami}
B.~Khosravi, Z.~Akhlaghi, M.~Khatami,
Quasirecognition by prime graph of simple group $D_n(3)$,
\textit{Publ. Math. Debrecen}, \textbf{78}:2 (2011), 469--484.


\bibitem{Khosravi_Babai_2011}
B.~Khosravi, A.~Babai,
Quasirecogniton by prime graph of $F_4(q)$ where $q=2^n>2$,
\textit{Monatsh. Math.}, \textbf{162}:3 (2011), 289--296.

\bibitem{3Khosravi_2007_2}
B.~Khosravi, B.~Khosravi, B.~Khosravi,
On the prime graph of $\PSL(2,p)$ where $p>3$ is a prime number,
\textit{Acta Math. Hungar.}, \textbf{116}:4 (2007), 295--307.

\bibitem{3Khosravi_2007}
B.~Khosravi, B.~Khosravi, B.~Khosravi,
Groups with the same prime graph as a CIT simple group,
\textit{Houston J. Math.}, \textbf{33}:4 (2007), 967--977 (electronic).

\bibitem{3Khosravi}
B.~Khosravi, B.~Khosravi, B.~Khosravi,
A characterization of the finite simple group $L_{16}(2)$ by its prime graph,
\textit{Manuscripta Math.}, \textbf{126}:1 (2008), 49--58.

\bibitem{Khosravi_Khosravi_Oskouei}
B.~Khosravi, B.~Khosravi, H.~R.~D.~Oskouei,
On recognition by prime graph of the projective special linear group over $GF(3)$,
\textit{Publications de l Institut Mathematique}, \textbf{95} (109) (2014), 255--266.


\bibitem{Khosravi_Moghanjoghi}
B.~Khosravi, A.~Z.~Moghanjoghi,
Quasirecognition by prime graph of some alternating groups,
\textit{Int. J. Contemp. Math. Sci.}, \textbf{2}:28 (2007), 1351--1358.


\bibitem{Khosravi_Moradi_1}
B.~Khosravi, H.~Moradi,
Quasirecognition by prime graph of finite simple groups $L_n(2)$ and $U_n(2)$,
\textit{Acta. Math. Hung.}, \textbf{132} (2011), 140--153.

\bibitem{Khosravi_Moradi_2}
B.~Khosravi, H.~Moradi,
Quasirecognition by prime graph of some orthogonal groups over the binary field,
\textit{J. Algebra Appl.}, \textbf{11}:3 (2012), 1250056.

\bibitem{Khosravi_Moradi_3}
B.~Khosravi, H.~Moradi,
Quasirecognition by prime graph of finite simple groups ${^2}D_n(3)$,
\textit{Int. J. Group Theory}, \textbf{3}:4 (2014), 47--56.



\bibitem{Kondr1}
A.~S.~Kondrat'ev,
Prime graph components of finite simple groups,
\textit{Math. USSR Sb.} \textbf{67} (1990), 235--247.

\bibitem{Kondr2}
A.~S.~Kondrat'ev,
Gruenberg--Kegel graph of a finite group and its applicatuons,
\textit{Algebra and Linear Optimization, Proc. of Internation Conf. in honor of 90th Birthday S.~N.~Chernikov}, UB RAS: Ekaterinburg, 2002, 141--158 (in Russian).

\bibitem{Kondrat'evE7(2)andE7(3)}
A. S. Kondrat'ev,
Recognizability of groups $E_7(2)$ and $E_7(3)$ by prime graph,
\textit{Proc. Steklov Inst. Math. (Suppl.)}, \textbf{289}:suppl.~1 (2015), 139--145.

\bibitem{Kondrat'ev2E6(2)}
A.~S.~Kondrat'ev,
Recognizability by prime graph of the group $^2E_6(2)$,
\textit{Fundam. Prikl. Mat.}, \textbf{22}:5 (2019), 115--120 (in Russian).

\bibitem{KondrRecSpor1}
A. S. Kondrat'ev,
On the recognizability of sporadic simple groups $Ru$, $HN$, $Fi_22$, $He$, $McL$, and $Co_3$ by the Gruenberg--Kegel graph,
\textit{Trudy Inst. Mat. i Mekh. UrO RAN}, \textbf{25}:4 (2019), 79--87 (in Russian).

\bibitem{KondrRecSpor2}
A. S. Kondrat'ev,
On the recognition of the sporadic simple groups $HS$, $J_3$, $Suz$, $O'N$, $Ly$, $Th$, $Fi_{23}$, and $Fi_{24}$ by the Gruenberg--Kegel graph,
\textit{Siberian Math. J.}, \textbf{61}:6 (2020), 1087--1092.



\bibitem{KondrKhr1}
A.~S.~Kondrat'ev and I.~V. Khramtsov,
On finite triprimary groups,
\textit{Trudy Inst. Mat. i Mekh. UrO RAN} \textbf{16}:3 (2010), 150--158
(in Russian).

\bibitem{KondrKhr2}
A.~S.~Kondrat'ev and I.~V.~Khramtsov,
On finite tetraprimary groups,
\textit{Proc. Steklov Inst. Math.} {\bf 279}:suppl.~1 (2012), 43--61,
\url{https://doi.org/10.1134/S0081543812090040}.

\bibitem{Kondr4}
A.~S.~Kondrat'ev and V.~D.~Mazurov,
Recognizibility of alternating groups of prime degree by their element orders,
\textit{Siberian Math. J.} \textbf{41}:2 (2000), 359--369.

\bibitem{Kourovka}
The Kourovka Notebook: Unsolved Problems in Group Theory. Editors: E. I. Khukhro and V. D. Mazurov.
Sobolev Institute of Mathematics SB RAS, Novosibirsk, 2018.

\bibitem{Lucido2}
M.~C.~Lucido,
The diameter of the prime graph of finite groups,
\textit{J. Group Theory} \textbf{2}:2 (1999), 157--172.

\bibitem{Lytkin}
Y.~V.~Lytkin,
On groups critical with respect to a set of natural numbers,
\textit{Siberian Electron. Math. Rep.}, \textbf{10} (2013), 666--675.

\bibitem{Mahmoudifar}
A.~Mahmoudifar,
Recognition by prime graph of the almost simple group  $PGL (2; 25)$,
\textit{J. Linear and Topological Algebra}, \textbf{5}:1 (2016), 63--66.

\bibitem{Mahmoudifar_Khosravi}
A.~Mahmoudifar, B.~Khosravi,
On quasirecognition by prime graph of the simple groups $A^+_n(p)$ and $A^-_n(p)$,
\textit{J. Algebra Appl.}, \textbf{14}:1 (2015), 1550006.

\bibitem{Martineau}
R.~P.~Martineau,
On 2-modular representations of the Suzuki groups,
\textit{Amer. J. Math.} \textbf{94} (1972), 55--72.

\bibitem{Maslova}
N.~V.~Maslova,
On the coincidence of Gruenberg--Kegel graphs of a finite simple group and its proper subgroup,
\textit{Proc. Steklov Inst. Math.} {\rm(}Suppl.{\rm)}, \textbf{288}:suppl.~1 (2015), 129--141.

\bibitem{Maslova_Conference}
N.~V.~Maslova, I.~N.~Belousov, N.~A.~Minigulov,
Open questions formulated at the 13th School-Conference on Group Theory Dedicated to V. A. Belonogov's 85th Birthday,
\textit{Trudy Inst. Mat. i Mekh. UrO RAN}, \textbf{26}:3 (2020), 275--285 (in Russian).


\bibitem{Maslova_Pagon}
N.~V.~Maslova, D.~Pagon,
On the realizability of a graph as the Gruenberg–Kegel graph of a finite group,
\textit{Siberian Electron. Mat. Rep.}, \textbf{13} (2016), 89--100.


\bibitem{Mazurov_1994_1}
V.~D.~Mazurov,
The set of orders of elements in a finite group,
\textit{Algebra Logic}, \textbf{33} (1994) 49--55.

\bibitem{Mazurov_1997_1}
V. D. Mazurov,
Characterizations of finite groups by sets of orders of their elements,
\textit{Algebra and Logic}, \textbf{36}:1 (1997), 23--32.

\bibitem{Mazurov_1997}
V.~D.~Mazurov,
A characterizations of finite nonsimple groups by the set of orders of their elements,
\textit{Algebra and Logic}, \textbf{36}:3 (1997), 182--192.

\bibitem{Mazurov2004}
V.~D.~Mazurov,
Recognition of finite simple groups $S_4(q)$ by their element orders,
\textit{Algebra and Logic}, \textbf{41}:2 (2002), 93--110.

\bibitem{Mazurov_Shi}
V.~D.~Mazurov, W.~J.~Shi,
A criterion of unrecognizability by spectrum for finite groups,
\textit{Algebra Logic}, \textbf{51} (2012) 160--162.

\bibitem{Moghaddamfar_Shi}
A.~R.~Moghaddamfar, W.~J.~Shi,
The characterization of almost simple groups $PGL(2, p)$ by their element orders,
\textit{Communications in Algebra}, \textbf{32}:9 (2004), 3327--3338.

\bibitem{Momen_Khosravi}
Z.~Momen, B.~Khosravi,
On r-recognition by prime graph of $B_p(3)$ where $p$ is an odd prime,
\textit{Monatsh. Math.}, \textbf{166}:2 (2012), 239--253.

\bibitem{Momen_Khosravi_2}
Z.~Momen, B.~Khosravi,
Groups with the same prime graph as the orthogonal group $B_n(3)$,
\textit{Siberian Math. J.} \textbf{54}:3 (2013), 487--500.

\bibitem{Momen_Khosravi_3}
Z.~Momen, B.~Khosravi,
Quasirecognition by prime graph of the simple group $B_n(9)$,
\textit{Proc. Romanian Acad., Ser.~A}, \textbf{16}:3 (2015), 397--404.



\bibitem{Moradi_Darafsheh_Iranmanesh}
H.~Moradi, M.~R.~Darafsheh, A.~Iranmanesh,
Quasirecognition by prime graph of the groups ${^2}D_{2n}(q)$ where $q<10^5$,
\textit{Mathematics}, \textbf{6}:4 (2018), Paper no.~57, 6 p.

\bibitem{Mosavi_Ahanjideh}
S.~Mosavi, N.~Ahanjideh,
Quasirecognition by prime graph of $C_n(4)$, where $n\ge 17$ is odd,
\textit{Bol. Soc. Parana. Mat. $(3)$}, \textbf{33}:1 (2015), 59--67.

\bibitem{Nosratpour_Darafsheh}
P.~Nosratpour, M.~R.~Darafsheh,
Characterization of the group $G_2(5)$ by the prime graph,
\textit{Ukr. Math. J.}, \textbf{68}:8 (2017), 1308--1313.

\bibitem{Schur}
I.~Schur,
\"Uber die Darstellung der endlichen Gruppen durch
gebrochene lineare Substitutionen,
\textit{J. Reine Angew. Math.} \textbf{127} (1904), 20--50.

\bibitem{Shi}
W.~Shi,
A characteristic property of $A_5$,
\textit{Journal of Southwest-China Teachers' University}, \textbf{2} (1986), 11--14 (in Chinese).

\bibitem{Shi_Conjecture}
W.~Shi,
A new characterization of the sporadic simple groups, in: Group theory,
Proc. 1987 Singapore Group theory conf., Berlin-New York, Walter de Gruyter,
1989, 531—540.


\bibitem{Shi_Yang}
W.~Shi, W.~Yang,
On finite groups with elements of prime power orders,
\textit{Journal of Yunnan Education College}, \textbf{1} (1986), p.2--10 (in Chinese).
English translation by Dr. Jinbao Li:  arXiv:2003.09445v1 [math.GR].


\bibitem{Staroletov1}
A.~Staroletov,
On almost eecognizability by spectrum of simple classical groups,
\textit{Int. J. Group Theory}, \textbf{6}:4 (2017), 7--33.

\bibitem{Staroletov2}
A.~M.~Staroletov,
On recognition of alternating groups by prime graph,
\textit{Siberian Electron. Math. Rep.}, \textbf{14} (2017), 994--1010.

\bibitem{Stewart}
W.~B.~Stewart,
Groups having strongly self-centralizing 3-centralizers,
\textit{Proc. London Math. Soc.} \textbf{426}:4 (1973), 653--680.

\bibitem{Suzuki}
M.~Suzuki,
On a class of doubly transitive groups,
\textit{Ann. Math.} \textbf{75}:1 (1962), 105–145.

\bibitem{Vasil_2005}
A.~V.~Vasil’ev,
On connection between the structure of a finite group and the properties of its prime graph,
\textit{Siberian Math. J.} \textbf{46} (2005) 396--404.

\bibitem{Vasil_2015}
A.~V.~Vasil'ev,
On finite groups isospectral to simple classical groups,
\textit{J. Algebra}, \textbf{423} (2015), 318--374.

\bibitem{GrechMazVas}
A.~V.~Vasil'ev, M.~A.~Grechkoseeva, V.~D.~Mazurov,
Characterization of the finite simple groups by spectrum and order,
\textit{Algebra and Logic}, \textbf{48}:6 (2009), 385--409.

\bibitem{VasilVdov_2005}
A.~V.~Vasiliev, E.~P.~Vdovin,
An adjacency criterion for the prime graph of a finite simple group,
\textit{Algebr Logic}, \textbf{44} (2005) 381--406.

\bibitem{VasilVdov_2011}
A.~V.~Vasil’ev, E.~P.~Vdovin,
Cocliques of maximal size in the prime graph of a finite simple group,
\textit{Algebra Logic}, \textbf{50} (2011), 291--322.

\bibitem{Williams}
J.~S.~Williams,
Prime graph components of finite groups,
\textit{J. Algebra} \textbf{69} (1981), 487--513.


\bibitem{EAtlas}
R.~Wilson et~al.,
Atlas of finite group representations,
\url{http:brauer.maths.gmul.ac.uk/Atlas/}

\bibitem{ZBM}
S.~Zahirović, I.~Bo\v{s}njak, R~Madar\'sz,
A study of enhanced power graphs of finite groups,
\textit{J. Algebra Appl.} \textbf{19} (2020), 2050062.

\bibitem{Zavarnitsine_2003}
A. V. Zavarnitsine,
Recognition of the simple groups $L_3(q)$ by element orders,
\textit{J. Group Theory}, \textbf{7}:1 (2003), 81--97.


\bibitem{Zavarnitsine_2006}
A. V. Zavarnitsine,
Recognition of finite groups by the prime graph,
\textit{Algebra and Logic}, \textbf{45}:4 (2006), 220--231.

\bibitem{Zavarnitsine_2006_1}
A. V. Zavarnitsine,
Recognition of simple groups $U_3(Q)$ by element orders,
\textit{Algebra and Logic}, \textbf{45}:2 (2006), 106--116.


\bibitem{Zavarnitsine_2010}
A.~V.~Zavarnitsine,
Uniqueness of the prime graph of $L_{16}(2)$,
\textit{Siberian Electron. Math. Rep.}, \textbf{7} (2010), 119--121.

\bibitem{Zavarnitsine_2013}
A.~V.~Zavarnitsine,
Finite groups with a five-component prime graph,
\textit{Siberian Math. J.}, \textbf{54}:1 (2013), 40–-46.

\bibitem{Zavarn_Mazurov}
A.~V.~Zavarnitsine, V.~D.~Mazurov,
Element orders in coverings of symmetric and alternating groups,
\textit{Algebra and Logic}, \textbf{38}:3 (1999), 159--170.


\bibitem{Zhang_Shi_Shen}
Q.~Zhang, W.~Shi, R.~Shen,
Quasirecognition by prime graph of the simple groups $G_2(q)$ and ${^2}B_2(q)$,
\textit{J. Algebra Appl.}, \textbf{10}:2 (2011), 309--317.



\bibitem{Zin_Maz}
M.~R.~Zinov'eva and V.~D.~Mazurov,
On finite groups with disconnected prime graph,
\textit{Proc. Steklov Inst. Math.} (Suppl.), \textbf{283}:suppl.~1 (2013),
139--145.

\bibitem{Zsigmondy}
K.~Zsigmondy,
Zur Theorie der Potenzreste,
\textit{Monatsh. Math. Phys.} \textbf{3} (1892), 265--284.


\end{thebibliography}
\end{document}